\documentclass[11pt]{amsart}
\usepackage{graphicx}
\usepackage{array}
\newcolumntype{M}[1]{>{\centering\arraybackslash}m{#1}}
\newcolumntype{N}{@{}m{0pt}@{}}
\usepackage[margin=1in]{geometry}
\usepackage{subcaption}
\usepackage{subfiles}
\usepackage[b]{esvect}
\usepackage{amssymb}
\usepackage{comment}
\usepackage{float}
\usepackage{marginnote}
\usepackage{euscript}
\usepackage{mathdots}
\usepackage{float}
\usepackage[dvipsnames]{xcolor}		
\usepackage{mathrsfs}
\usepackage{amsthm}
\usepackage{amsmath}
\usepackage{stmaryrd}
\usepackage{tikz}
\usepackage{tikz-cd}
\usepackage{accents}
\usepackage{upgreek}
\usepackage{enumerate}
\usepackage{bm}
\usepackage{mathtools}
\usetikzlibrary{patterns}
\usepackage[all]{xy}
\usepackage{caption}
\usepackage{url}
\usepackage{float}
\usepackage{todonotes} 
\usepackage{colonequals}
\usepackage{bbm}
\usepackage{longtable}
\usepackage[full]{textcomp}
\usepackage[cal=cm]{mathalfa}
\usepackage{xparse}
\usepackage{comment}
\usepackage{enumitem}
\usetikzlibrary{fadings}
\usepackage[useregional]{datetime2}
\usepackage[
backend=biber,
style=alphabetic,
]{biblatex}
\usepackage{hyperref}
\usepackage{cleveref}
\newcommand{\x}{\boldsymbol{x}}
\newcommand{\colVec}[2]{\begin{psmallmatrix} #1 \\ #2 \end{psmallmatrix}}
\newcommand{\cone}{\mathrm{Cone}}
\newcommand{\Z}{\mathbb{Z}}

\newcommand{\R}{\mathbb{R}}
\newcommand{\C}{\mathbb{C}}

\newcommand{\bigal}{\boldsymbol{\upalpha}}
\newcommand{\alsig}{\tilde{\Sigma}_{\bigal}}

\newcommand*\circled[1]{\tikz[baseline=(char.base)]{
            \node[shape=circle,draw,inner sep=2pt] (char) {#1};}}
\newcommand{\overbar}[1]{\mkern 1.5mu\overline{\mkern-5mu#1\mkern-1.6mu}\mkern 1.5mu}
\newcommand{\vvleft}[1]{\mathchoice
  {\reflectbox{\ensuremath{\displaystyle \vv{\reflectbox{\ensuremath{\displaystyle #1}}}}}}
  {\reflectbox{\ensuremath{\textstyle \vv{\reflectbox{\ensuremath{\textstyle #1}}}}}}
  {\reflectbox{\ensuremath{\scriptstyle \vv{\reflectbox{\ensuremath{\scriptstyle #1}}}}}}
  {\reflectbox{\ensuremath{\scriptscriptstyle \vv{\reflectbox{\ensuremath{\scriptscriptstyle #1}}}}}}
}
\newcommand{\vsqsubset}{\mathrel{\text{\scalebox{1}[1.2]{$\sqsubset$}}}}
\newcommand{\sqC}{\scalebox{0.9}[1.3]{$\sqsubset$}}
\newtheorem{theorem}{Theorem}[section]

\newtheorem{corollary}[theorem]{Corollary}
\newtheorem{lemma}[theorem]{Lemma}
\newtheorem{proposition}[theorem]{Proposition}
\theoremstyle{plain}
\newtheorem{innercustomthm}{Theorem}
\newenvironment{customthm}[1]
{\renewcommand\theinnercustomthm{#1}\innercustomthm}
{\endinnercustomthm}

\crefname{innercustomthm}{theorem}{theorems}
\hypersetup{
  colorlinks   = true,          
  urlcolor     = olive,          
  linkcolor    = olive,          
  citecolor   = olive             
}

\makeatletter
\def\@tocline#1#2#3#4#5#6#7{\relax
	\ifnum #1>\c@tocdepth 
	\else
	\par \addpenalty\@secpenalty\addvspace{#2}%
	\begingroup \hyphenpenalty\@M
	\@ifempty{#4}{%
		\@tempdima\csname r@tocindent\number#1\endcsname\relax
	}{%
		\@tempdima#4\relax
	}%
	\parindent\z@ \leftskip#3\relax \advance\leftskip\@tempdima\relax
	\rightskip\@pnumwidth plus4em \parfillskip-\@pnumwidth
	#5\leavevmode\hskip-\@tempdima
	\ifcase #1
	\or\or \hskip 1em \or \hskip 2em \else \hskip 3em \fi%
	#6\nobreak\relax
	\dotfill\hbox to\@pnumwidth{\@tocpagenum{#7}}\par
	\nobreak
	\endgroup
	\fi}
\makeatother

\theoremstyle{definition}

\newtheorem{remark}[theorem]{Remark}

\newtheorem*{runningexample*}{Running example}

\newtheorem*{aside*}{Aside}

\newtheorem{construction}[theorem]{Construction}

\newtheorem{definition}[theorem]{Definition}
\newtheorem{example}[theorem]{Example}

\newtheorem{proposition-definition}[theorem]{Proposition-Definition}

\title[Cut and paste invariants of moduli spaces of stable maps to toric surfaces]{Cut and paste invariants of moduli spaces of \\ stable maps to toric surfaces}
\author{Cat Rust}
\date{\today}

\begin{document}

\maketitle
\begin{abstract}
   We study moduli spaces of logarithmic stable maps to proper toric surfaces with prescribed tangency conditions to the toric boundary. Fixing a surface, we define a chamber decomposition on the space of all tangencies such that as a function of the tangency data, the class of the corresponding moduli space in the Grothendieck ring of varieties is constant. The tangency data defines a collection of lines through the origin which, along with the rays of the toric fan, are cyclically ordered. The chambers of the decomposition are regions for which this cyclic ordering is constant and generalise the well-known resonance chambers. We pose the open question of whether the Gromov--Witten invariants of the moduli spaces are polynomial on these chambers. 
\end{abstract}

\tableofcontents
\section{Introduction}
\subsection{Main result}

Let \(X_\Sigma\) be a proper toric surface with fan \(\Sigma\) in \(N_\R\) and \(\upalpha \in N^n\). Denote by \(\mathcal{M}(X_\Sigma, \upalpha)\) the moduli space parametrising maps \[(\mathbb{P}^1, p_1, \ldots, p_n) \to (X_\Sigma, \partial X_\Sigma)\] where \(\upalpha \in N^n\) encodes the tangency of each \(p_i\) to \(\partial X_\Sigma\). For all \(\upalpha\), we have \(\mathcal{M}(X_\Sigma, \upalpha) \cong \mathcal{M}_{0,n} \times (\mathbb{C}^*)^2\) \cite[Section 4.4]{tropLog}. This space has a toroidal compactification: the moduli space of \emph{logarithmic stable maps} which we denote by \(\,\overbar{\mathcal{M}}(X_\Sigma, \upalpha)\). This space parametrises logarithmic maps from \(n\)-marked, genus 0 nodal curves to \((X_\Sigma, \partial X_\Sigma)\), again with tangency encoded by \(\upalpha\). Logarithmic maps generalise classical stable maps by recording tangency information to a fixed divisor within a logarithmic structure on the source and target. They are used widely within enumerative geometry to define logarithmic Gromov--Witten invariants. 

We give a chamber decomposition of the tangency space \(N^n\) called the \emph{\(\Sigma\)-slope decomposition} (see Section \ref{walls}) such that the following main theorem holds. 

\begin{customthm}{A}[Theorem \ref{thm: body}] \label{thm: intro} As a function of \(\upalpha\), the class \(\left[\,\overbar{\mathcal{M}}(X_\Sigma, \upalpha)\right] \in K_0(\mathrm{Var}/\mathbb{C})\) is constant on the open regions of the \(\Sigma\)-slope decomposition of \(N^n\). 
\end{customthm}

Here, \(\,\overbar{\mathcal{M}}(X_\Sigma, \upalpha)\) denotes the coarse space of the moduli space of unsaturated (see Section \ref{saturation} for a discussion of saturation) stable maps to \(X_\Sigma\). Theorem \ref{thm: intro} is proved by working with the combinatorial stratification of \(\,\overbar{\mathcal{M}}(X_\Sigma, \upalpha)\) (see Section \ref{stratif} for details). The Grothendieck ring \(K_0(\mathrm{Var}/\mathbb{C})\) of \(\C\)-varieties enriches many topological invariants.   

\subsection{Chamber structure}

We give the hypersurfaces in \(N^n\) which form the walls of the \(\Sigma\)-slope decomposition. Further details will be given in Section \ref{walls}. Note that each wall is given either by a linear or quadratic equation. 

Write \(\upalpha = (\colVec{x_1}{y_1}, \ldots, \colVec{x_n}{y_n}) \in N^n\) and let \(I \subset [n] := \{1, \ldots, n\}\) be a non-empty subset. Define \[\upalpha_{I} = \sum_{i \in I} \upalpha_i.\] The \(\upalpha_I\) are the slopes of the finite edges of the dual graphs of source curves and each defines a line through the origin of slope \(\upalpha_I\) in \(N_\mathbb{R}\). Similarly, let each ray \(\rho \in \Sigma(1)\) have primitive generator \(v_\rho = \colVec{\rho_x}{\rho_y}\). Then, again each ray defines a line of slope \(v_\rho\) in \(N_\R\)

For fixed \(\upalpha\), \(\Sigma\), there is a cyclic ordering on the subsets of \([n]\) and \(\Sigma(1)\) induced by projecting the rays of slope \(\upalpha_I\) and \(v_\rho\) onto \(S^1\) (see Figure \ref{cyclicOrder}). We define the walls to produce chambers where this cyclic ordering is constant.  

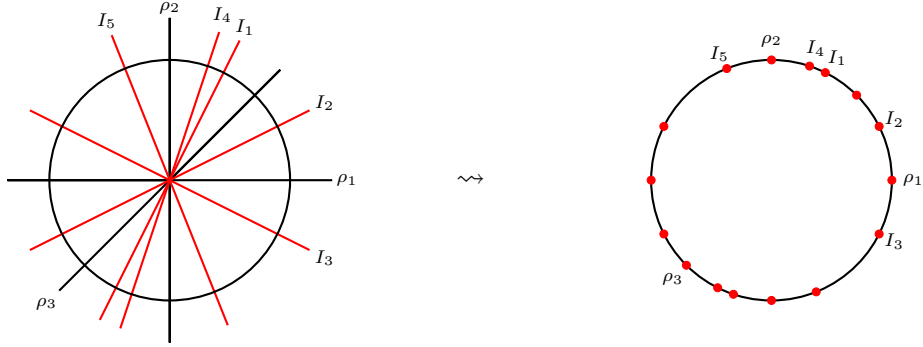
\begin{figure}
    \centering
            \begin{tikzpicture}[scale = 2]
            \draw[thick] (1.08, 0) -- (-1.08, 0) -- (0,0) -- (0, 1.08) -- (0, -1.08) -- (0,0) -- (0.73485, 0.73485) -- (-0.73485, -0.73485);
            \draw[red, thick] (0.46476, 0.92952) -- (-0.46476, -0.92952);
            \draw[red, thick] (0.32863, 0.9859) -- (-0.32863, -0.9859);
            \draw[red, thick] (-0.38596, 0.9649) -- (0.38596, -0.9649);
            \draw[red, thick] (0.92952, 0.46476) -- (-0.92952, -0.46476);
            \draw[red, thick] (0.92952, -0.46476) -- (-0.92952, 0.46476);
            \node at (0.5, 1.02) {\tiny{\(I_1\)}};
            \node at (1.03, 0.475+0.04) {\tiny{\(I_2\)}};
            \node at (1.03, -0.475-0.04) {\tiny{\(I_3\)}};
            \node at (0.35, 1.1) {\tiny{\(I_4\)}};
            \node at (-0.42, 1.06) {\tiny{\(I_5\)}};
            \node at (1.18, 0) {\tiny{\(\rho_1\)}};
            \node at (0, 1.15) {\tiny{\(\rho_2\)}};
            \node at (-0.82, -0.82) {\tiny{\(\rho_3\)}};

            \node at (2, 0) {\(\rightsquigarrow\)} ;
            \draw[thick] (0,0) circle (0.8cm);
            \draw[thick] (4,0) circle (0.8cm);
            \filldraw[red] (4, 0.8) circle (0.75pt);
            \filldraw[red] (4.8, 0) circle (0.75pt);
            \filldraw[red] (4, -0.8) circle (0.75pt);
            \filldraw[red] (3.2, 0) circle (0.75pt);
            \filldraw[red] (0.56569+4, 0.56569) circle (0.75pt);
            \filldraw[red] (-0.56569+4, -0.56569) circle (0.75pt);
            \filldraw[red] (0.8*0.31623+4, 0.8*0.94868) circle (0.75pt);
            \filldraw[red] (-0.31623*0.8+4, -0.94868*0.8) circle (0.75pt);
            \filldraw[red] (0.44721*0.8+4, 0.89443*0.8) circle (0.75pt);
            \filldraw[red] (-0.44721*0.8+4, -0.89443*0.8) circle (0.75pt);
            \filldraw[red] (-0.37139*0.8+4, 0.92848*0.8) circle (0.75pt);
            \filldraw[red] (0.37139*0.8+4, -0.92848*0.8) circle (0.75pt);
            \filldraw[red] (0.89443*0.8+4,0.44721*0.8) circle (0.75pt);
            \filldraw[red] (0.89443*0.8+4,-0.44721*0.8) circle (0.75pt);
            \filldraw[red] (-0.89443*0.8+4,0.44721*0.8) circle (0.75pt);
            \filldraw[red] (-0.89443*0.8+4,-0.44721*0.8) circle (0.75pt);

            \node at (0.8*0.31623+4+0.04, 0.8*0.94868+0.12) {\tiny{\(I_4\)}};
            \node at (0.44721*0.8+4+0.08, 0.89443*0.8+0.1) {\tiny{\(I_1\)}};
            \node at (-0.37139*0.8+4-0.05, 0.92848*0.8+0.1) {\tiny{\(I_5\)}};
            \node at (0.89443*0.8+4+0.1,0.44721*0.8+0.05) {\tiny{\(I_2\)}};
            \node at (0.89443*0.8+4+0.1,-0.44721*0.8-0.05) {\tiny{\(I_3\)}};
            \node at (-0.56569+4-0.09, -0.56569-0.09) {\tiny{\(\rho_3\)}};
            \node at (0+4, 0.93) {\tiny{\(\rho_2\)}};
            \node at (1.18*0.8+4, 0) {\tiny{\(\rho_1\)}};
        \end{tikzpicture}
    \caption{The fan of \(\mathbb{P}^2\) with rays extended to full lines is shown in black and rays of slope \(\upalpha_{I_j}\) for subsets \(I_j \subset [n]\) are shown in red. Projecting each ray onto \(S^1\) induces a cyclic ordering on the subsets of \([n]\) and the rays of \(\Sigma\).}
    \label{cyclicOrder}
\end{figure}

The cyclic ordering will change when \(\upalpha_I = k\upalpha_J\) for \(k \in \mathbb{Q}^*\) and subsets \(I\), \(J \subset [n]\), or when \(\upalpha_I\) becomes parallel to \(v_\rho\) for some \(\rho \in \Sigma(1)\). As such, the walls of the slope arrangement are defined by quadratic hypersurfaces of the form \[W_{IJ} = \left\{(\colVec{x_1}{y_1}, \ldots, \colVec{x_n}{y_n}) \in N^n : \sum_{i \in I} x_i \sum_{j \in J} y_j = \sum_{i \in I} y_i \sum_{j \in J} x_j\right\}\] and linear hypersurfaces of the form \[W_{\rho, I} = \left\{(\colVec{x_1}{y_1}, \ldots, \colVec{x_n}{y_n}) \in N^n :  \rho_x \sum_{i \in I} y_i = \rho_y \sum_{i \in I} x_i\right\}.\] The connected regions cut out by all \(W_{IJ}\) and \(W_{\rho,I
}\) give the chambers of the \(\Sigma\)-slope decomposition.

\subsection{Proof strategy}

For Theorem \ref{thm: intro}, we use that given any \(\upalpha\), \(\upalpha' \in N^n\), we have stratifications \[\overbar{\mathcal{M}}(X_\Sigma, \upalpha) = \bigsqcup_{c} \mathcal{M}(c), \quad \overbar{\mathcal{M}}(X_\Sigma, \upalpha') = \bigsqcup_{c'} \mathcal{M}(c')\] where \(\mathcal{M}(c)\), \(\mathcal{M}(c')\) are locally closed strata indexed by realisable combinatorial types \(c\), \(c'\) (see Definition \ref{combinTypeDef}). In Section \ref{formula}, given a combinatorial type \(c\), we give a formula for \([\mathcal{M}(c)]\) by considering the algebro-geometric interpretation of such types: each corresponds to the isomorphism class of a map from a nodal curve to \(X_\Sigma\) with a prescribed interaction with \(\partial X_\Sigma\). One can then use standard results in toric geometry to find the moduli of such a map.

Then, in Section \ref{construct}, given that \(\upalpha\), \(\upalpha'\) are in the same chamber of the \(\Sigma\)-slope decomposition, we build a bijection \(c \leftrightarrow c'\) such that \[[\mathcal{M}(c)] = [\mathcal{M}(c')] \in K_0(\mathrm{Var}/\mathbb{C}).\] We do this by passing to a subdivision of \(\Sigma\) which is used to build an appropriate combinatorial type matching the data of \(c\). The realisability of such a combinatorial type depends on the slopes of the edges of the dual graphs given \(\upalpha'\). When \(\upalpha'\) and \(\upalpha\) are in the same chamber, the slopes are appropriate to build the matching combinatorial type. Given the bijection, it follows that \([\,\overbar{\mathcal{M}}(X_\Sigma, \upalpha)] = [\,\overbar{\mathcal{M}}(X_\Sigma, \upalpha')] \in K_0(\mathrm{Var}/\C)\), proving Theorem \ref{thm: intro}. 

This method differs significantly from the proof of the analogous result for the space of expanded maps to \(\mathbb{P}^1\) given in \cite{kannan}. This is because the stratification of maps to a toric surface is much more intricate: for expanded maps to \(\mathbb{P}^1\), the combinatorics can be handled by considering ``left-right'' orderings (``admissible ordered partitions'') that are in bijection with combinatorial types. In two dimensions, the added degree of freedom mean there is no straightforward algorithm to enumerate strata, necessitating a constructive proof.

\subsection{Brief history and related work}

This project was inspired by the work of Siddarth Kannan on the moduli space \(\,\overbar{\mathcal{M}}(\upalpha)\) of expanded relative stable rubber maps to 1-dimensional toric targets satisfying ramification data \(\upalpha\) \cite[Theorem B]{kannan}. In this setting, Kannan proves that as a function of \(\upalpha \in \Z^n\), the class \([\,\overbar{\mathcal{M}}(\upalpha)] \in K_0(\mathrm{Var}/\mathbb{C})\) is constant on the chambers of the resonance decomposition of \(\Z^n\), first defined in \cite[Section 1]{hurw}. This project is a higher dimensional analogue of this result, working instead with unexpanded stable maps.

Kannan and Song have also used combinatorial stratifications to study invariants of moduli spaces of curves and maps in \cite{kannanTerry2, kannanTerry} and to study normal crossings compactifications of moduli spaces of maps in \cite{kannanTerry3}. Studying moduli spaces via tropicalisation has become a central technique in the field, starting with the moduli space of curves \cite{moduliOfCurves}. This framework was later generalized to study moduli spaces of admissible covers \cite{admissCovers}, weighted stable curves \cite{weightedStableCurves} and stable maps to toric varieties \cite{skeleton}. 

\subsection{Future directions}

\subsubsection{Higher dimensional targets} A natural generalisation of this problem is to consider higher dimensional toric targets. Here, the same approach and tools will apply, however the combinatorics will again increase in complexity. It is unclear by how much: the jump from 1-dimensional toric targets \cite{kannan} to 2 was sizeable. The added dimension introduces a level of geometric intricacy that makes the problem far less tractable due to the number of potential combinatorial types.

\subsubsection{Expanded stable maps} One can also ask the analogous question for spaces of expanded stable maps. In this setting, the combinatorics is much more intricate: one needs to keep track of self-intersections of curves in the image of a tropical map. It remains to be seen whether the same approach as Section \ref{collapsed} would be fruitful. 

However, one can consider the space of expanded maps to the \(k\)-dimensional logarithmic torus, \(\mathbb{G}^k_{\log}\), which has tropicalisation \(\mathbb{R}^k\). In this case, the combinatorial complexity of having maps to a fan is removed, with the only combinatorial problem being tracking self-intersections of a curve. This is work in progress. 

\subsubsection{Chamber structure for other topological invariants} One can consider other topological invariants, such as the Euler--Satake (or orbifold) Euler characteristic. This is not constant on the chambers of the \(\Sigma\)-slope decomposition since it is sensitive to finite stabilisers: see Section \ref{satake} for an example. The same is true for 1-dimensional targets for the space of maps considered in \cite{kannan}. 

\subsubsection{Chambers of polynomiality for Gromov--Witten invariants} Finally, the chambers presented in this paper are a generalisation of the resonance chamber decomposition of \(\mathbb{Z}^n\). These are chambers of polynomiality for genus 0 double-Hurwitz numbers \cite{hurw}, a result later generalized to arbitrary genus in \cite{arbGenus}.

These results raise the following question: \begin{center}
    \textbf{Q.} Are the Gromov--Witten invariants of \(\,\overbar{\mathcal{M}}(X_\Sigma, \upalpha)\) polynomial on the chambers of the \(\Sigma\)-slope decomposition?
\end{center} 

\subsection{Outline}

We begin in Section \ref{diff} by recalling the various notions of moduli spaces of stable maps, followed by a description of the combinatorial stratification of \(\,\overbar{\mathcal{M}}(X_\Sigma, \upalpha)\) in Section \ref{stratif}. Section \ref{grothRing} then provides the necessary background on the Grothendieck ring of varieties. In Section \ref{formula}, we derive a combinatorial formula for the class of a given stratum in the Grothendieck ring of varieties. Sections \ref{equiv}, \ref{fan} and \ref{lift} build the framework required for the proof of Theorem \ref{thm: intro}, which is given in Section \ref{construct}. Finally, in Section \ref{other}, we define the walls governing the \(\Sigma\)-slope decomposition and discuss alternative topological invariants of \(\,\overbar{\mathcal{M}}(X_\Sigma, \upalpha)\).

\subsection{Acknowledgements} The author is deeply grateful to Navid Nabijou for his guidance and careful attention to detail throughout this project, and for fostering a wonderfully supportive research environment at Queen Mary University of London. The author would also like to thank Adrian Sheard for many helpful conversations and Siddarth Kannan, Dhruv Ranganathan, Qaasim Shafi and Terry Song for helpful feedback on an earlier version of this work. 

\subsection{Funding} The author is supported by Queen Mary University of London's Principal’s studentship.

\section{Background}\label{background}
\subsection{Different spaces of stable maps} \label{diff}

Relative genus 0 stable maps were first considered by Gathmann for the case when \(D\) is a smooth, very ample divisor in his foundational paper \cite{gathmann}. Li then generalised this work for any smooth divisor and source curves of any genus \cite{li2001, li2}. With the help of tropical geometry, Abramovich–Chen and Gross–Siebert then constructed moduli spaces of logarithmic stable maps to a simple normal crossings pair \cite{faltings1, faltings2, grossSiebert}. For the case of a toric pair, \cite{skeleton} explicitly describes the tropicalisation to tropical maps from dual curves to fans, which is the technique this paper hinges on. 

There are several notions of stable maps. There are three main choices to make.

\subsubsection{Rubber or rigid}

For the classical notion of stable maps, two maps \(f\), \(g\colon C \to X\) are considered isomorphic if there exists \(\psi \in \mathrm{Aut}(C)\) such that \(f \circ \psi = g\). However, maps are not identified when they differ by an automorphism of the target. This notion of isomorphism is referred to as \emph{rigid} stable maps and is the approach we take for Theorem \ref{thm: body}. When our target is toric, or a toric variety bundle, we can use the dense torus to identify stable maps, giving moduli spaces of \emph{rubber} stable maps (see e.g. \cite{kannan, bundles, graberVak, marcus}).

Choosing between these notions of isomorphism changes the resulting moduli space fundamentally, affecting the dimension. Tropically, each space has a stratification by combinatorial types of tropical maps. In the rubber case, we consider combinatorial types up to translation, but not in the rigid case.

\subsubsection{Fan or void}

This is a choice of target. We can consider maps to a toric pair \((X_\Sigma, \partial X_\Sigma)\) where we encode tangency conditions to \(\partial X_\Sigma\) (see e.g. \cite{skeleton}). This is the approach taken for Theorem \ref{thm: body}. The tropicalisation of the resulting moduli space is by combinatorial types of tropical maps from \(n\)-marked directed trees to \(\Sigma\). 

Conversely, we can choose our target to be the logarithmic torus, \(\mathbb{G}_{\mathrm{log}}^k\) (see \cite{logTorus}). Every toric variety of dimension \(k\) has a morphism to \(\mathbb{G}_{\log}^k\) which is a logarithmic modification. The stratification of the space of maps to \(\mathbb{G}_{\log}^k\) is by tropical maps to \(\mathbb{R}^k\) and is somewhat easier to handle than maps to a fan: the moduli space of stable maps to a toric variety is a subdivision of the space of maps to \(\mathbb{G}_{\mathrm{log}}^k\).      

\subsubsection{Collapsed or expanded} 

As the source curves degenerate, we can allow irreducible components to be mapped completely into boundary strata. The logarithmic structures will allow us to still keep track of tangency information in these cases: our maps will still satisfy the pre-determined tangency conditions that we fix. Doing this gives rise to spaces of ``collapsed'' stable maps (referred to simply as ``logarithmic stable maps'') \cite{faltings1, faltings2, grossSiebert}. This is the approach taken in Theorem \ref{thm: body}.

In the expanded setting, curve components do not fall into boundary strata. Instead, the target is replaced by a multi-component ``expansion'' such that the degenerated curves become tangent to the boundary divisors of these new components. This was first considered for the case of a smooth divisor in work by Li \cite{li2001} and was then developed in the logarithmic setting by Kim \cite{kim}. This work was extended for simple normal crossings pairs in \cite{dhruv}, where a degeneration formula for the resulting invariants is also given.  

For a fixed target and choice of rubber or rigid maps, the space of expanded maps is a subdivision of the space of collapsed maps. See Figure \ref{rels} for a full diagram of the relationship between each space. 

In Section \ref{collapsed}, we study the space of rigid, collapsed maps to a toric pair, denoted \(\,\overbar{\mathcal{M}}(X_\Sigma, \upalpha)\). 
This work is a higher dimensional analogue of \cite[Theorem B]{kannan} for the space of rubber, expanded maps to \(\mathbb{P}^1\).

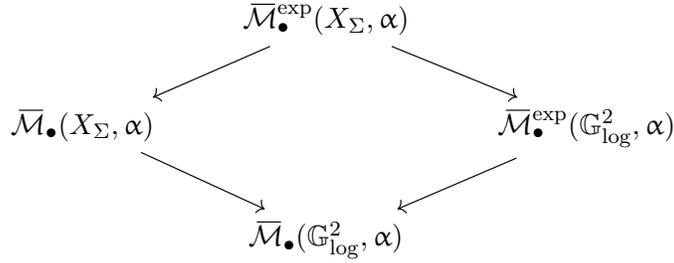
\begin{figure}
    \centering
    \begin{tikzcd}[ampersand replacement=\&]
	\& {\overbar{\mathcal{M}}^{\exp}_\bullet(X_\Sigma, \upalpha)} \\
	{\overbar{\mathcal{M}}_\bullet(X_\Sigma,\upalpha)} \&\& {\overbar{\mathcal{M}}^{\exp}_\bullet(\mathbb{G}_{\log}^2, \upalpha)} \\
	\& {\overbar{\mathcal{M}}_\bullet(\mathbb{G}_{\log}^2, \upalpha)}
	\arrow[from=1-2, to=2-1]
	\arrow[from=1-2, to=2-3]
	\arrow[from=2-1, to=3-2]
	\arrow[from=2-3, to=3-2]
\end{tikzcd}
    \caption{A diagram of relationships between spaces of stable maps. Here, \(\bullet\) denotes either rigid or rubber. Each arrow is a subdivision. Fixing the target and numerical data, there are morphisms between the rigid and rubber spaces, but these are no longer subdivisions.}
    \label{rels}
\end{figure}

\subsection{Relative maps to a toric pair}

Let \(X_\Sigma\) be a proper toric surface with complete fan \(\Sigma\) in \(N_\R\) where \(N \cong \Z^2\). Define a sub-lattice of \(N^n\) as follows: \[N^n_0 := \left\{(\upalpha_1, \ldots, \upalpha_n) \in N^n : \sum_{i=1}^n \upalpha_i = \colVec{0}{0}\right\}.\] Denote by \(\mathcal{M}(X_{\Sigma}, \upalpha)\) the moduli space of  maps \(f\colon(\mathbb{P}^1, p_1, \ldots, p_n) \to (X_\Sigma, \partial X_\Sigma)\) with contact orders given by \(\upalpha = (\upalpha_1, \ldots, \upalpha_n) \in N^n_0\) where \(\upalpha_i\) encodes the contact order between the marked point \(p_i\) and \(\partial X_\Sigma\). The condition \(\sum_i \upalpha_i = \colVec{0}{0}\) will be referred to as the \emph{balancing condition}. 
Throughout, we assume that \(\upalpha_i \neq \colVec{0}{0}\) for all \(i\). See Section \ref{zero} for a discussion of how to remove this assumption. 

To interpret each \(\upalpha_i\) as tangency data, recall that for toric varieties, there is a bijection
\[\{\text{Cartier divisors supported on } \partial X\} \longleftrightarrow \{\mathbb{Z}\text{-piecewise linear functions on } \Sigma\}.\]
Let \(\Sigma(1) = \{\rho_1, \ldots, \rho_m\}\) with primitive generators \(v_{\rho_i} \in N\) and write \(\partial X = D_1 + \ldots + D_m\). If \(X_\Sigma\) is smooth, each \(\rho_j\) corresponds to a Cartier divisor \(D_j\) and therefore there exists a piecewise linear function \(\varphi_j\colon |\Sigma| \to \mathbb{R}\) such that \(\varphi_j(v_{\rho_i}) = \delta_{ij}\). The tangency order of \(p_i\) to \(D_j\) is then defined to be \[c_{ij} = \varphi_j(\upalpha_i).\] 

As an alternative interpretation, if \(\upalpha_i\) belongs to a unique maximal cone of \(\Sigma\), for example, \(\upalpha_i \in \cone(v_{\rho_j}, v_{\rho_k})\), we can write \[\upalpha_i = c_{ij} v_{\rho_j} + c_{ik} v_{\rho_k}.\] Else, if \(\upalpha_i\) is parallel to \(v_{\rho_j}\), we have \[\upalpha_i = c_{ij} v_{\rho_j}.\] 

If \(X_\Sigma\) is singular, there may exist boundary Weil divisors which are not Cartier. However, since \(X_\Sigma\) is a surface, these divisors will be \(\mathbb{Q}\)-Cartier, and so we instead consider \(p_i\) to be tangent to \(kD_j\) (where \(kD_j\) is Cartier) with tangency orders defined in the same way as in the smooth case.

\begin{example}[Tangency Interpretation for \(X_\Sigma\) Singular]
    Consider the following fan \(\Sigma\): 

    \begin{center}
        \begin{tikzpicture}[scale = 2]
            \filldraw[purple!10] (-0.7, -0.7) -- (-0.2, 0.3) -- (0.5,1) -- (1.5,1) -- (1,0) -- (0.3, -0.7);
            \draw[thick, ->] (0,0) -- (0.3, 0);
            \node at (0.38, 0.15) {\tiny{\(\colVec{1}{0}\)}};
            \node at (-0.068, 0.3) {\tiny{\(\colVec{1}{2}\)}};
            \draw[thick, ->] (0,0) -- (0.15, 0.3);
            \draw[thick, ->] (0,0) -- (-0.15, -0.15);
            \node at (-0.03, -0.28) {\tiny{\(\colVec{\text{-}1}{\text{-}1}\)}};
            \draw[thick] (0,0) -- (1,0);
            \draw[thick] (0,0) -- (0.5, 1); 
            \draw[thick] (0,0) -- (-0.7, -0.7);
            \node at (1.2, 0) {\(D_1\)};
            \node at (0.65, 0.9) {\(D_2\)};
            \node at (-0.35, -0.6) {\(D_3\)};
        \end{tikzpicture}
    \end{center} We see that \(X_\Sigma\) is singular since \(\colVec{1}{2}\) and \(\colVec{1}{0}\) do not form a \(\mathbb{Z}\)-basis for \(N\) and therefore \(D_1\) and \(D_2\) are not Cartier. However, \(2D_1\), \(2D_2\) and \(D_3\) are Cartier, as shown by the following piecewise linear functions: 

    \begin{center}
     \resizebox{\textwidth}{!}{%
        \begin{tikzpicture}[scale = 1.75]
            \filldraw[purple!10] (-0.7, -0.7) -- (-0.2, 0.3) -- (0.5,1) -- (1.5,1) -- (1,0) -- (0.3, -0.7);
            \draw[thick] (0,0) -- (1,0);
            \draw[thick] (0,0) -- (0.5, 1); 
            \draw[thick] (0,0) -- (-0.7, -0.7);
            \node at (1.2, 0) {\(D_1\)};
            \node at (0.65, 0.9) {\(D_2\)};
            \node at (-0.3, -0.6) {\(D_3\)};
            \node at (0.75, 0.5) {\(2x-y\)};
            \node at (0.3, -0.35) {\(2x-2y\)};
            \node at (-0.15, 0.15) {0};
            \node at (0, -1.2) {\(\varphi_1\)};

            \filldraw[purple!10] (-0.7+3.5, -0.7) -- (-0.2+3.5, 0.3) -- (0.5+3.5,1) -- (1.5+3.5,1) -- (1+3.5,0) -- (0.3+3.5, -0.7);
            \draw[thick] (0+3.5,0) -- (1+3.5,0);
            \draw[thick] (0+3.5,0) -- (0.5+3.5, 1); 
            \draw[thick] (0+3.5,0) -- (-0.7+3.5, -0.7);
            \node at (1.2+3.5, 0) {\(D_1\)};
            \node at (0.65+3.5, 0.9) {\(D_2\)};
            \node at (-0.3+3.5, -0.6) {\(D_3\)};
            \node at (0.5+3.75, 0.5) {\(y\)};
            \node at (0.15+3.75, -0.35) {0};
            \node at (-0.5+3.5, 0.2) {\(-2x+2y\)};
            \node at (3.5,-1.2) {\(\varphi_2\)};

            \filldraw[purple!10] (-0.7+7, -0.7) -- (-0.2+7, 0.3) -- (0.5+7,1) -- (1.5+7,1) -- (1+7,0) -- (0.3+7, -0.7);
            \draw[thick] (0+7,0) -- (1+7,0);
            \draw[thick] (0+7,0) -- (0.5+7, 1); 
            \draw[thick] (0+7,0) -- (-0.7+7, -0.7);
            \node at (1.2+7, 0) {\(D_1\)};
            \node at (0.65+7, 0.9) {\(D_2\)};
            \node at (-0.3+7, -0.6) {\(D_3\)};
            \node at (0.5+7, 0.5) {\(0\)};
            \node at (0.15+7, -0.35) {\(-y\)};
            \node at (-0.5+7, 0.2) {\(y-2x\)};
            \node at (7,-1.2) {\(\varphi_3\)};
        \end{tikzpicture}
        }
    \end{center}

    Let now \(\upalpha_i = \colVec{1}{1}\). We have 
    \begin{align*}
        c_{i1} = \varphi_1\left(\colVec{1}{1}\right) &= 1 \\ 
        c_{i2} = \varphi_2\left(\colVec{1}{1}\right) &= 1 \\ 
        c_{i3} = \varphi_3\left(\colVec{1}{1}\right) &= 0.
    \end{align*} Therefore, \(p_i\) is required to be tangent to both \(2D_1\) and \(2D_2\) with order 1 and must not meet \(D_3\). \begin{flushright}\emph{\textbf{End of example.}}\end{flushright}
\end{example}

\subsection{Logarithmic stable maps} The moduli space \(\,\overbar{\mathcal{M}}(X_\Sigma, \upalpha)\) of logarithmic stable maps compactifies \(\mathcal{M}(X_\Sigma, \upalpha)\) \cite{grossSiebert, faltings1, faltings2, skeleton}. Here, we endow \((X_\Sigma, \partial X_\Sigma)\) with the natural logarithmic structure on a toric pair. This space is a Deligne--Mumford stack \cite{faltings2, grossSiebert}. Throughout, we fix the number \(n\) of marked points and all source curves are of genus 0. The class \(\beta \in H_2(X_\Sigma, \Z)\) is determined by \(\upalpha\) since the maps are relative to the full toric boundary. 

The logarithmic structures allow us to keep track of the contact orders for components of the source curve which ``fall into'' strata of \(\partial X_\Sigma\). For instance, let \(D_i \subseteq \partial X_\Sigma\) be an irreducible component of the boundary and \(C_j \subseteq C\) an irreducible component of \(C\). Let \(C_j\), \(D_i\) be locally defined by sections \(s_{C_j}\), \(s_{D_i}\) respectively. Given \(f\colon C \to X_\Sigma\), the logarithmic structures will give \(f^\flat(s_{D_i}) = s_{C_j}^a\), indicating that \(C_j\) is mapped entirely into \(D_i\) whilst conserving the tangency information in the exponent \(a\). Note that \(a\) may be equal to zero: in this case, \(C_j\) does not fall into \(D_i\), but may still meet it at some marked point(s). 

\subsection{Saturation} \label{saturation}

For a logarithmic scheme \((X, M_X)\), the stalks of \(M_X\) are classically required to be fine and saturated. However, one can relax this condition and consider logarithmic schemes that are fine but not necessarily saturated, leading to stacks of \emph{unsaturated} stable maps, first constructed in \cite{faltings1}. 

For a moduli space \(\,\overbar{\mathcal{M}}(X)\) of logarithmic stable maps to \(X\), there exists a space of saturated maps and a space of unsaturated maps, which we denote by \(\,\overbar{\mathcal{M}}(X)^{\mathrm{sat}}\) and \(\,\overbar{\mathcal{M}}(X)^{\mathrm{unsat}}\) respectively. Strata of both spaces correspond to a monoid \(P\) defined from relations between tropical parameters for the corresponding combinatorial type. 

Given \(P\), there are processes of \emph{torification}, defined in \cite[Remark 1.2]{dan}, and pre-torification which give the local models for \(\,\overbar{\mathcal{M}}(X)^{\mathrm{sat}}\) and \(\,\overbar{\mathcal{M}}(X)^{\mathrm{unsat}}\) respectively. The torification of \(P\), \(P^{\mathrm{tor}}\), is integral, torsion-free, saturated and sharp (``toric''), whereas the pre-torification, \(P^{\mathrm{unsat}}\), is integral, torsion-free and sharp, but not necessarily saturated (``pre-toric''). 

The processes of torification and pre-torification are as follows. Begin with an arbitrary monoid \(P\).

\[
\begin{array}{c} 
\circled{1} \text{ Integralise and remove torsion} \\
\\
\left.
\begin{array}{rl}
\text{\scriptsize Construct } P^{\mathrm{unsat}} & \circled{2} \text{ Sharpen} \\[1.5em]
\color{red!80!black}\text{\scriptsize Construct } P^{\text{tor}} \text{\scriptsize  from } P^{\mathrm{unsat}} & \color{red!80!black}\circled{3} \text{ Saturate} \\[1.5em]
& \color{red!80!black}\circled{4} \text{ Sharpen}
\end{array}
\right\}
\longleftrightarrow
\left\{
\begin{array}{l}
\color{teal}\circled{2}' \text{ Saturate} \\[2em]
\color{teal}\circled{3}' \text{ Sharpen}
\end{array}
\right.
\quad
\color{teal}\begin{array}{c}\text{\scriptsize Construct}\\[-0.5ex] P^{\text{tor}}\end{array}
\end{array}
\]

Steps \(\circled{1}\), \(\circled{2}'\) and \(\circled{3}'\) give the torification process \(P \mapsto P^{\mathrm{tor}}\) as in \cite{dan}, giving the local model \(\mathrm{Spec}(\mathbb{C}[P^{\mathrm{tor}}])\) for \(\,\overbar{\mathcal{M}}(X)^{\mathrm{sat}}\). Carrying out steps 1 and 2 yields a pre-toric monoid, giving the local model \(\mathrm{Spec}(\mathbb{C}[P^{\mathrm{unsat}}])\) for \(\,\overbar{\mathcal{M}}(X)^{\mathrm{unsat}}\). Given a pre-toric monoid, carrying out steps 3 and 4 yields a toric monoid which, by \cite[Proposition 1.2.3 (v)]{ogus}, is isomorphic to \(P^{\mathrm{tor}}\). Saturating may introduce torsion, therefore when saturating after sharpening, sharpening must be carried out once more to obtain a toric monoid. 

The space of unsaturated maps will often not be normal. For \(\,\overbar{\mathcal{M}}(X)\), there is a virtual normalisation map \[\nu\colon \,\overbar{\mathcal{M}}(X)^{\mathrm{sat}} \to \,\overbar{\mathcal{M}}(X)^{\mathrm{unsat}}\] between the spaces of saturated and unsaturated maps. The map \(P^{\mathrm{unsat}} \hookrightarrow P^{\mathrm{tor}}\) gives rise to the normalisation map \[\mathrm{Spec}(\C[P^{\mathrm{tor}}]) \to \mathrm{Spec}(\C[P^{\mathrm{unsat}}]).\] Throughout, we work only with spaces of unsaturated maps and hence will drop ``\(\mathrm{unsat}\)'' from the notation. 

Now, working with the space of unsaturated maps, \(\,\overbar{\mathcal{M}}(X_\Sigma, \upalpha)\), there is a forgetful map \[\overbar{\mathcal{M}}(X_\Sigma, \upalpha) \to  \,\overbar{\mathcal{M}}_\Lambda(X_\Sigma)\] to the space of ordinary stable maps to \(X_\Sigma\) which forgets the logarithmic structures on the source and target. Here, \(\Lambda\) packages the numerical data such as the degree of the map, genus, and number of marked points. Consider the set of images of \(\mathbb{C}\)-points, \(\mathrm{Im}(X_\Sigma, \partial X_\Sigma)(\C) \subseteq \,\overbar{\mathcal{M}}_\Lambda(X_\Sigma)(\mathbb{C})\), equipped with the analytic topology. Then, there is a continuous map \[\pi\colon \overbar{\mathcal{M}}(X_\Sigma, \upalpha) \to \mathrm{Im}(X_\Sigma, \partial X_\Sigma)(\C).\] We have the following proposition. 

\begin{proposition}[{\cite[Lemma 3.7.4]{faltings1}, \cite[Proposition 3.6.3]{skeleton}}]
    Let \(\xi_1 = (C \to S, \mathcal{M}_{S,1}, f_1)\) and \(\xi_2 = (C \to S, \mathcal{M}_{S,2}, f_2)\) be minimal log stable maps such that the combinatorial types of \(\xi_1\) and \(\x_2\) coincide, and the underlying maps \(\underline{\xi_1}\), \(\underline{\xi_2}\) coincide. Then, there is a canonical isomorphism of unsaturated log maps \(\xi_1^{\mathrm{us}} \cong \xi_2^{\mathrm{us}}\).\label{3.7.4}
\end{proposition}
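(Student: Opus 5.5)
The plan is to reconstruct the argument of Abramovich--Chen (and its toric-target version in the skeleton paper) by working étale locally on \(S\) and showing that, once the combinatorial type and the underlying map are fixed, every remaining piece of logarithmic data is forced up to a unique isomorphism of unsaturated log structures.

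First, recall the shape of a minimal log stable map. Fix a geometric point \(s\in S\) and the combinatorial type: the dual graph \(G\) of \(C_s\), the cone \(\sigma_v\) of \(\Sigma\) containing each vertex \(v\), and the contact orders (slopes) \(u_e\in N\) on the edges. Minimality then says that the characteristic monoid \(\overline{\mathcal{M}}_{S,s}\) is the pre-torification \(P^{\mathrm{unsat}}\) of a monoid \(P\) depending only on this type. Concretely, \(P\) is generated by:
\begin{itemize}
\item the edge lengths \(\ell_e\);
\item the vertex positions \(\mathrm{pos}_v\) in the monoids \(\sigma_v^\vee{}^\vee\cap N\), i.e.\ in \(\mathrm{Hom}(\sigma_v^\vee\cap M,\N)\);
\end{itemize}
subject to the relations \(\mathrm{pos}_{v'}-\mathrm{pos}_v=\ell_e u_e\). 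Since the two maps have the same type, they give the same abstract monoid \(\overline{\mathcal{M}}_{S,1}=\overline{\mathcal{M}}_{S,2}=P^{\mathrm{unsat}}\) near \(s\). It therefore remains to compare the torsors of lifts \(\mathcal{M}_{S,i}\to\overline{\mathcal{M}}_S\), together with the compatible data on \(C\) and the log map \(f_i^\flat\).

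Second, I would show that the log structure on \(S\) is determined by \(\underline{\xi}\). For each generator of \(P\) I build a canonical \(\mathcal{O}_S^*\)-torsor with a map to \(\mathcal{O}_S\).
\begin{itemize}
\item For edge lengths \(\ell_e\): since \(C\to S\) is fixed, the node smoothing parameter gives the torsor from the base log structure of \(C\to S\) (by the minimality of the curve log structure, this is canonical).
\item For vertex positions: pick \(m\in\sigma_v^\vee\cap M\). Restricting \(f^\flat(\chi^m)\) to a generic point of the component \(C_v\) (away from nodes and markings) lands in the pullback of \(\mathcal{M}_S\). This identifies the torsor for \(\langle \mathrm{pos}_v,m\rangle\) with the line bundle/section pair \(f^*\mathcal{O}(\text{divisor of }\chi^m)\) restricted there, which depends only on the underlying map.
\end{itemize}
The relations in \(P\) are respected because both constructions are compatible at nodes. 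Thus we obtain a morphism of pre-log structures from the free monoid on the generators to \(\mathcal{O}_S\), defined purely from \(\underline{\xi}\). The associated log structure, integralised and sharpened without saturating, is then \(\mathcal{M}_{S,i}^{\mathrm{us}}\) for both \(i=1,2\). This gives a canonical isomorphism \(\mathcal{M}_{S,1}^{\mathrm{us}}\cong\mathcal{M}_{S,2}^{\mathrm{us}}\) compatible with the structure maps.

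Third, I extend the isomorphism to \(C\) and check that \(f_1^\flat\) and \(f_2^\flat\) agree. The log structure on \(C\) is \(\mathcal{M}_S\) pushed along \(C\to S\), plus node and marking data determined by the curve, so it is transported by the isomorphism. The maps \(f_i^\flat(\chi^m)\) are then sections of the same torsor with the same image in \(\mathcal{O}_C\) and the same image in the characteristic sheaf (given by the type). On the locus where \(\alpha\) is injective on the relevant torsor, namely the generic points of components not mapped into \(\partial X_\Sigma\), they agree. The key point is \emph{rigidity}: the difference of two lifts is an \(\mathcal{O}^*\)-valued function that equals 1 on a dense open set, and hence equals 1.

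I expect this third step to be the main obstacle. For components falling into the boundary, \(f^\flat(\chi^m)\) maps to 0 in \(\mathcal{O}_C\), so agreement is not visible from the underlying map directly. There I would argue via the constructed torsors: by construction, the section for \(\langle\mathrm{pos}_v,m\rangle\) was defined to be \(f^\flat(\chi^m)\) itself. The real issue is therefore the independence of the choice of the generic point, which follows from connectedness of \(C_v\) and the balancing/slope relations along edges. This is precisely where unsaturation matters: passing to the saturation would introduce torsion elements whose lifts are not pinned down by \(\underline{\xi}\). That is why the statement holds only for \(\xi^{\mathrm{us}}\).
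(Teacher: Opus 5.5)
The paper does not prove this statement; it quotes it from Abramovich--Chen and Ranganathan. Your outline therefore has to carry the whole argument, and it has genuine gaps exactly where the content lies.

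\textbf{Step two does not build the right log structure.} The log structure associated to a pre-log structure on the \emph{free} monoid of generators has that free monoid as its characteristic. Integralising and sharpening it does not impose \(\langle \mathrm{pos}_{v'},m\rangle=\langle \mathrm{pos}_v,m\rangle+\langle u_e,m\rangle\ell_e\), nor \(\langle \mathrm{pos}_v,m\rangle=0\) for \(m\in\sigma_v^\perp\). Imposing these relations requires specified isomorphisms between the corresponding torsors. Saying they are ``compatible at nodes'' asserts what has to be proved. Moreover, for a component lying in \(\partial X_\Sigma\), the torsor of \(\langle\mathrm{pos}_v,m\rangle\) cannot be read off at a generic point of \(C_v\). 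There the pulled-back torsor of \(\chi^m\) is trivialised by \(f^\flat(\chi^m)\) itself, which is not underlying data.

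\textbf{Step three is circular.} ``The section was defined to be \(f^\flat(\chi^m)\) itself'' leaves open whether this means \(f_1^\flat\) or \(f_2^\flat\). Your rigidity argument (a unit equal to \(1\) on a dense open set) only covers components not in the boundary. On a boundary component, \(f_1^\flat(\chi^m)\) and \(f_2^\flat(\chi^m)\) differ by a nonzero constant that is generally not \(1\), and it must be absorbed by an automorphism of the base. Separately, in step one the characteristic of a minimal \(\xi_i\) is the saturated monoid, not \(P^{\mathrm{unsat}}\).

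\textbf{The missing argument.} Work over a geometric point after choosing charts. On each \(C_v\cong\mathbb{P}^1\), the functions describing \(f_1^\flat(\chi^m)\) and \(f_2^\flat(\chi^m)\) have the same divisor, fixed by the contact orders of the type. So their ratio is a constant \(r_v(m)\in\mathbb{C}^*\), multiplicative in \(m\) and trivial on \(\sigma_v^\perp\cap M\). At a node, the two curve log structures give lifts of \(\ell_e\) that differ by some \(\lambda_e\in\mathbb{C}^*\). Comparing leading coefficients there gives \(r_{v'}(m)=r_v(m)\lambda_e^{\langle u_e,m\rangle}\). Hence \((r,\lambda)\) is a character of the group generated by the \(\ell_e\) and the \(\langle\mathrm{pos}_v,m\rangle\), modulo exactly the edge and cone relations. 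That character is the isomorphism \(\xi_1^{\mathrm{us}}\cong\xi_2^{\mathrm{us}}\). It is unique because the curve structure and \(f^\flat\) pin down the lifts of all generators.

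\textbf{Your saturation diagnosis is backwards.} Saturating does not add torsion to \(\overline{\mathcal{M}}_S\); it \emph{kills} the torsion of the presented group. That imposes extra relations on which \((r,\lambda)\) is only determined up to roots of unity, which is why saturated minimal lifts are not unique (as noted in Section~\ref{saturation}). For the same reason, the unsaturated base must be the fine monoid with this torsion kept, not a torsion-free quotient.

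\textbf{An example showing the torsion matters.} On \(\mathbb{F}_2\), take the type joining a vertex \(v\) on \(\cone(\colVec{1}{0})\) to a vertex on \(\cone(\colVec{-1}{2})\). The path runs through a bivalent vertex on \(\cone(\colVec{0}{1})\), with edge slope \(\colVec{-3}{2}\) throughout. Its presented group is \(\Z\oplus\Z/2\), with torsion class \(\ell_1-2\ell_2\). Negate the lift of \(\ell_1\), and multiply \(f^\flat(\chi^m)|_{C_v}\) by \((-1)^{m_x}\) for \(m=(m_x,m_y)\in M\). This gives a second minimal lift with the same underlying map and type. It is identified with the first only by a character that is nontrivial on that torsion class. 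So the two lifts become isomorphic over the fine base, but not over any torsion-free one.
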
 

\begin{corollary}
    With the analytic topology, the spaces \(\,\overbar{\mathcal{M}}(X_\Sigma, \upalpha)\) and \(\mathrm{Im}(X_\Sigma, \partial X_\Sigma)(\C)\) are homeomorphic. \label{corollary}
\end{corollary}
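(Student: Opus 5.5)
The plan is to show that the continuous map \(\pi\colon \overbar{\mathcal{M}}(X_\Sigma, \upalpha) \to \mathrm{Im}(X_\Sigma, \partial X_\Sigma)(\C)\) is a continuous bijection from a compact Hausdorff space onto a Hausdorff space. Such a map is automatically closed, hence a homeomorphism. Continuity of \(\pi\) is given in the paper, and surjectivity holds by definition of \(\mathrm{Im}(X_\Sigma, \partial X_\Sigma)(\C)\) as the set-theoretic image. It therefore remains to check two things: compactness of the source and Hausdorffness of the target, and injectivity on \(\C\)-points.

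For the topological inputs, I would argue as follows. The space \(\overbar{\mathcal{M}}(X_\Sigma, \upalpha)\) is a proper Deligne--Mumford stack, since logarithmic stable maps to a proper target form a proper moduli problem \cite{faltings2, grossSiebert}. Its coarse space, and likewise the unsaturated version, which receives the finite surjective normalisation map \(\nu\), is therefore a proper algebraic space. In the analytic topology such a space is compact and Hausdorff. The target \(\mathrm{Im}(X_\Sigma, \partial X_\Sigma)(\C)\) is a subspace of the analytification of the coarse space of the proper DM stack \(\overbar{\mathcal{M}}_\Lambda(X_\Sigma)\), so it is Hausdorff.

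The main step is injectivity. Take two \(\C\)-points of \(\overbar{\mathcal{M}}(X_\Sigma, \upalpha)\) with the same image under \(\pi\); they have isomorphic underlying stable maps \(\underline{\xi_1} \cong \underline{\xi_2}\). I first need to show that their combinatorial types coincide, so that Proposition \ref{3.7.4} applies. For this I would argue that the combinatorial type is determined by the underlying map together with the fixed tangency data \(\upalpha\). The dual graph comes from the source curve. The cone of \(\Sigma\) containing each vertex is determined by which boundary strata the image of the corresponding component lies in. The edge slopes are forced by balancing in genus \(0\): cutting an edge splits the tree, and the slope is the sum \(\upalpha_I\) over the markings on one side, corrected by the contribution of the degrees of the components on that side against the boundary divisors. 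Since these are all read off from the underlying map, \(\xi_1\) and \(\xi_2\) have the same type. Proposition \ref{3.7.4} then gives \(\xi_1^{\mathrm{us}} \cong \xi_2^{\mathrm{us}}\), so the two points coincide in the unsaturated space.

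I expect this type-determination step to be the main obstacle. The subtlety is that the slopes at nodes joining two components that both lie inside the boundary are not obviously visible from the underlying map. I would handle this by the genus-\(0\) balancing argument on the tree described above, which uses no data beyond the map and \(\upalpha\). If that turns out to be insufficient, I would instead invoke the more precise statement in \cite[Section 3.6]{skeleton} that the minimal log structure, and hence the type, is recovered from the underlying map in genus \(0\) toric targets. With injectivity established, the compact-to-Hausdorff argument completes the proof.
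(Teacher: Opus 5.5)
Your proposal is correct and follows the paper's proof: \(\pi\) is a bijection by Proposition \ref{3.7.4}, and a continuous bijection from a compact space to a Hausdorff space is a homeomorphism. Your extra check that the combinatorial types agree is a hypothesis of Proposition \ref{3.7.4} that the paper passes over silently, and it is a welcome addition, but no ``degree correction'' is needed: for toric targets balancing holds in \(N\) at every vertex, so the edge slopes are exactly the \(\upalpha_I\) of Lemma \ref{slopeLemma}, while \(\sigma_v\) and \(\sigma_e\) are read off from the strata containing the images of components and nodes.
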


\begin{proof}
    Proposition \ref{3.7.4} gives that \(\pi\) is a bijection. Since \(\,\overbar{\mathcal{M}}(X_\Sigma, \upalpha)\) is compact, \(\pi\) is continuous and \(\mathrm{Im}(X_\Sigma, \partial X_\Sigma)(\C)\) is Hausdorff, \(\pi\) is a homeomorphism. Hence, we can work directly with the stratification of \(\,\overbar{\mathcal{M}}(X_\Sigma, \upalpha)\) by realisable combinatorial types of tropical maps.
\end{proof}

If we worked instead with the space of saturated maps, the proof of Theorem \ref{thm: body} as given in Section \ref{construct} would fail since stable maps corresponding to realisable combinatorial types have varying numbers of logarithmic lifts, giving that the class of \(\,\overbar{\mathcal{M}}(X_\Sigma, \partial X_\Sigma)\) in the Grothendieck is not simply a sum over strata, but a weighted sum. The same issue appears already in dimension 1, for the space of maps considered in \cite{kannan}. 

\subsection{Tropicalisation: combinatorial stratification by realisable combinatorial types}\label{stratif}

We begin with the tropicalisation of source curves.

\begin{definition}[Dual Graph, Stable Dual Graph]
    Let \((C, p_1, \ldots, p_n)\) be a genus 0, \(n\)-marked nodal curve. Define the dual graph of \(C\), \(\Gamma_C\), to be the finite, connected tree with \(n\) marked legs, labelled 1 through \(n\), such that:
    \begin{enumerate}
        \item Vertices \(v \in V(\Gamma_C)\) correspond to irreducible components \(C_v\) of \(C\).
        \item Finite edges \(e \in E(\Gamma_C)\) join vertices \(v\), \(w \in V(\Gamma_C)\) if and only if curve components \(C_v\) and \(C_w\) meet at a node.
        \item Each of the \(n\) marked legs is attached to a vertex \(v\), indicating that the corresponding marked point lies on the irreducible component \(C_v\).
    \end{enumerate}

    We call \(\Gamma_C\) \emph{stable} if for all \(v \in V(\Gamma_C)\), \(\mathrm{val}(v) \geq 3\). The set of isomorphism classes of \(n\)-marked stable trees is denoted \(\Gamma_{0,n}\). 
\end{definition}

\begin{definition}[Stabilisation]
    Let \(\Gamma \in \Gamma_{0,n}\) and \(\Gamma'\) be an arbitrary \(n\)-marked graph. We say that \(\Gamma'\) \emph{stabilises} to \(\Gamma\) if smoothing all 2-valent vertices (that is, removing each 2-valent vertex and amalgamating its two incident edges) results in a graph isomorphic to \(\Gamma\). \label{stabilise}
\end{definition}

Points in \(\,\overbar{\mathcal{M}}(X_\Sigma, \upalpha)\) parametrise two pieces of information \cite[Section 5.4]{tropLog}:

\begin{enumerate}
    \item Geometric data of a stable map from an \(n\)-pointed nodal curve \((C, p_1, \ldots, p_n) \to X_\Sigma\).
    \item Combinatorial data of an equivalence class of tropical maps from \(\Gamma_C\) to \(\Sigma\).
\end{enumerate} 
The tropical maps arise from the maps of monoids provided by the logarithmic structures. 
To define these maps, we first assign slopes to oriented edges of \(\Gamma_C\): this is the way of encoding the contact orders within the combinatorics. 

For a graph \(\Gamma\), define the set of directed edges as \[\vv{E}(\Gamma)  = \{\vv{e} = (e, v) : e \in E(\Gamma), v \in V(\Gamma), v \preceq e\}\] where \(v \preceq e\) indicates that \(e\) is incident to \(v\) and \(\vv{e}\) is directed away from \(v\). If \(e\) is a finite edge, with second vertex \(w\), define \(\vvleft{e} = (e,w)\). Similarly, we have the set of directed legs \[\vv{L}(\Gamma) = \{\vv{\ell} = (\ell, v) : \ell \in L(\Gamma), v \in V(\Gamma), v \preceq \ell\}.\] 

\begin{definition}[Balanced Slope Assignment]
    \label{balance} Let \(\upalpha \in N_0^n\) and \(\Gamma\) be a graph with directed edge set \(\vv{E}(\Gamma)\) and directed leg set \(\vv{L}(\Gamma)\). Given \(\upalpha\), we define a \emph{balanced slope assignment}, \(m_{\upalpha}\), of \(\vv{E}(\Gamma) \cup \vv{L}(\Gamma)\) to be a map 
    \begin{align*}
        m_{\upalpha} \colon \vv{E}(\Gamma) \cup \vv{L}(\Gamma) &\to N\\
        \vv{e} &\mapsto m_{\upalpha}(\vv{e})
    \end{align*}
   such that 
   \begin{enumerate}
       \item For each leg \(\vv{\ell_i} \in \vv{L}(\Gamma)\), \[m_{\upalpha}(\vv{\ell_i}) = \upalpha_i.\]
       \item For each finite edge \(\vv{e} \in \vv{E}(\Gamma)\), \[m_{\upalpha}(\vv{e}) = -m_{\upalpha}(\vvleft{e}).\] 
       \item For each \(v \in V(\Gamma)\), \[\sum_{v \preceq e} m_{\upalpha}(\vv{e}) = \colVec{0}{0}.\]
   \end{enumerate}
\end{definition}

We prove that when \(\upalpha\) satisfies the balancing condition, \(m_{\upalpha}\) is unique. 

\begin{lemma}[Lemma 3.4, \cite{rootsAndLogs}]
        Let \(\upalpha = (\upalpha_1, \ldots, \upalpha_n) \in N^n_0\) and \(\Gamma\) be a finite, \(n\)-marked tree. Then, there is a unique balanced slope assignment \(m_{\upalpha}\) of \(\vv{E}(\Gamma) \cup \vv{L}(\Gamma)\). \label{slopeLemma} 
\end{lemma}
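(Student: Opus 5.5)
Since \(\Gamma\) is a tree, removing a finite edge \(e\) disconnects it into exactly two components. The plan is to show that balancing forces each edge slope to equal a specific sum of leg slopes, and then check that this formula is itself balanced. Uniqueness and existence are handled separately.

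\textbf{Uniqueness.} Fix a directed edge \(\vv{e} = (e,v)\) with other endpoint \(w\). Let \(\Gamma_{\vv{e}}\) be the component of \(\Gamma \setminus e\) containing \(v\), and let \(I_{\vv{e}} \subseteq [n]\) be the set of legs attached to vertices of \(\Gamma_{\vv{e}}\). Sum condition (3) of Definition \ref{balance} over all vertices of \(\Gamma_{\vv{e}}\). Every finite edge internal to \(\Gamma_{\vv{e}}\) contributes twice, once in each direction, and these cancel by condition (2). The legs contribute \(\upalpha_{I_{\vv{e}}}\) by condition (1). The only remaining term is \(m_{\upalpha}(\vv{e})\). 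Hence any balanced slope assignment must satisfy
\[m_{\upalpha}(\vv{e}) = -\upalpha_{I_{\vv{e}}} = -\sum_{i \in I_{\vv{e}}} \upalpha_i.\]
This determines \(m_{\upalpha}\) completely on edges, and on legs it is fixed by condition (1), so the assignment is unique.

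\textbf{Existence.} Define \(m_{\upalpha}(\vv{\ell_i}) = \upalpha_i\) and \(m_{\upalpha}(\vv{e}) = -\upalpha_{I_{\vv{e}}}\). For condition (2), note that the leg sets of the two components of \(\Gamma\setminus e\) partition \([n]\), so \(I_{\vvleft{e}} = [n]\setminus I_{\vv{e}}\). The balancing condition \(\sum_i \upalpha_i = 0\) then gives \(\upalpha_{I_{\vvleft{e}}} = -\upalpha_{I_{\vv{e}}}\). This is the one place where \(\upalpha \in N^n_0\) is used.

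For condition (3) at a vertex \(v\), let \(e_1, \ldots, e_k\) be the finite edges at \(v\) and let \(J\) be the set of legs at \(v\). The legs of \(\Gamma\) are partitioned into \(J\) together with the leg sets of the branches hanging off \(v\) through each \(e_j\). The leg set of the branch through \(e_j\) is \(I_{\vvleft{e_j}}\), the component of \(\Gamma \setminus e_j\) not containing \(v\). So
\[\sum_{v\preceq e} m_{\upalpha}(\vv{e}) = \upalpha_J + \sum_j \upalpha_{I_{\vvleft{e_j}}} = \upalpha_{[n]} = 0,\]
where the middle expression uses condition (2) to rewrite \(m_{\upalpha}(\vv{e_j}) = -\upalpha_{I_{\vv{e_j}}}\) as \(\upalpha_{I_{\vvleft{e_j}}}\).

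There is no real obstacle here. The only point needing care is the bookkeeping of orientations, in particular confirming that \(I_{\vv{e}}\) consists of the legs on the \(v\)-side of \(e\) and keeping the signs straight. An alternative route is induction on the number of vertices by pruning a leaf vertex, but the cut formula is cleaner. It also identifies the edge slopes as \(\pm\upalpha_I\), which matches the paper's description of the wall slopes.
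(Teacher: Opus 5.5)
Your proof is correct, but it takes a different route from the paper. The paper roots the tree at a vertex and orients every edge towards the root. It then fixes slopes recursively from the leaves upward: at each non-root vertex, the single upward edge is forced by balancing against the already-determined edges and legs below it. Uniqueness and existence thus come out of one recursion, and the only leftover constraint is balancing at the root, which the paper identifies with $\sum_i \upalpha_i = 0$. This is essentially the leaf-pruning alternative you mention at the end.

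You instead sum condition (3) over one side of a cut to obtain the closed formula $m_{\upalpha}(\vv{e}) = -\upalpha_{I_{\vv{e}}}$, which gives uniqueness at once. You then verify (2) and (3) directly for this formula. Your route gains three things. It is root-free. It makes explicit the two points the paper's sketch leaves implicit: that the reversed orientation is consistent (condition (2)), and why the root condition reduces to global balancing. It also yields the description of finite-edge slopes as $\pm\upalpha_I$, which the paper only derives in the paragraph after the lemma. The paper's recursion, in turn, is closer to an algorithm for actually computing the slopes.

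One small inaccuracy in your commentary: balancing is not used only in checking (2). Your check of (3) also uses it, both when rewriting $-\upalpha_{I_{\vv{e_j}}}$ as $\upalpha_{I_{\vvleft{e_j}}}$ and in the final equality $\upalpha_{[n]} = 0$. Without balancing, the vertex sum at a vertex with $k$ finite edges would be $(1-k)\upalpha_{[n]}$. This does not affect the validity of your proof.
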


\begin{proof}
    Assign to the leg marked \(i\) the slope \(\upalpha_i\). Then, choose a starting vertex \(v \in V(\Gamma)\) and direct all edges and legs of the graph towards \(v\). Define a level structure on \(V(\Gamma)\), \(h(v) \in \mathbb{R}\), such that if there is a directed path between \(v_1\), \(v_2 \in V(\Gamma)\), then \(h(v_1) \leq h(v_2)\). Going from the lowest to highest levels, we start at leaves of the graph. The single outgoing finite edge from a leaf has a slope determined by the slopes of the leg(s) it supports. Moving up in height, at each stage the slope of the single upwards edge is determined by the slopes of edges below, which have all already been determined. Once at \(v\), the condition that the slopes sum to \(\colVec{0}{0}\) is equivalent to the balancing condition, which is satisfied by assumption.  
\end{proof}

We now give a characterisation of the slopes of the finite edges for any \(\Gamma\). Consider \(\vv{e} = (e,v) \in \vv{E}(\Gamma)\) and \(\Gamma \setminus \vv{e}\). Since \(\Gamma\) is a tree, removing \(\vv{e}\) splits \(\Gamma\) into two halves, each supporting complementary subsets of the marked legs. Let \(I \subset [n]\) denote the set of marked legs supported by the connected component behind \(\vv{e}\) (the component containing \(v\)). To ensure that the slope assignment is balanced, we must have \[\upalpha_I + m_{\upalpha}(\vv{e}) = \colVec{0}{0}\] and hence \[m_{\upalpha}(\vv{e}) = -\upalpha_I = \upalpha_{I^{\mathrm{c}}}\] by the balancing condition. Therefore, we have the following characterisation for the slopes of the finite edges: \[\{m_{\upalpha}(\vv{e}) : \vv{e} \in \vv{E}(\Gamma)\} = \{\upalpha_I : 1 < |I| < n-1\}.\] Given any \(I \subset [n]\), there exists a stable graph \(\Gamma\) and an edge \(\vv{e} \in \vv{E}(\Gamma)\) such that \(m_{\upalpha}(\vv{e}) = \upalpha_I\).

To define a tropical map for a graph \(\Gamma\), we consider a metric enhancement, \(\sqC_\Gamma \in \mathcal{M}_{0,n}^{\mathrm{trop}}\).  

\begin{definition}[Tropical Map to \(\Sigma\)] \label{tropicalMap}
    Consider a metric graph \(\sqC \in \mathcal{M}_{0,n}^{\mathrm{trop}}\) with balanced slope assignment \(m_{\upalpha}\) for \(\upalpha \in N^n_0\) and let \(\Sigma\) be a complete fan. Identify each finite edge \(\vv{e} \in \vv{E}(\sqC)\) with \([0, l_e]\) and each leg \(\vv{\ell} \in \vv{L}(\sqC)\) with \(\mathbb{R}_{\geq 0}\). A \emph{tropical map} \(\varphi\colon\sqC \to \Sigma\) is a continuous map such that:
    
    \begin{enumerate}
        \item For each \(\vv{e} = (e, v) \in \vv{E}(\sqC)\), the restriction map \(\varphi|_{\vv{e}}\colon [0, l_e] \to \Sigma\)  has the form 
    \begin{align*}
        \varphi|_{\vv{e}}\colon [0, l_e] &\to \mathbb{R}^2\\
        t &\mapsto \varphi(v) + tm_{\upalpha}(\vv{e}).
    \end{align*}
        \item For each \(\vv{\ell} = (\ell, w) \in \vv{L}(\Gamma)\), the restriction map \(\varphi|_{\vv{\ell}}\colon \mathbb{R}_{\geq 0} \to \Sigma\)  has the form 
    \begin{align*}
        \varphi|_{\vv{\ell}}\colon \mathbb{R}_{\geq 0} &\to \mathbb{R}^2\\
        t &\mapsto \varphi(w) + tm_{\upalpha}(\vv{\ell}).
    \end{align*}
    \end{enumerate}
\end{definition} 

For the stratification of \(\,\overbar{\mathcal{M}}(X_\Sigma, \upalpha)\), we consider equivalence classes of tropical maps, captured in the following definition. We group these by stable graphs \(\Gamma \in \Gamma_{0,n}\).  

\begin{definition}[Combinatorial Type Stabilising to \(\Gamma\)]\label{ctDef}
    \label{combinTypeDef} Let \(\Gamma \in \Gamma_{0,n}\), \(\Sigma \) be a complete fan and \(\upalpha \in N_0^n\). The data of a combinatorial type \(c\) is as follows: 
    \begin{itemize}
        \item A tree \(\Gamma^c\) stabilising to \(\Gamma\) as per Definition \ref{stabilise}. 
        \item An assignment of cones \(\sigma_v\), \(\sigma_e \in \Sigma\) to each vertex \(v \in V(\Gamma^c)\) and edge or leg \(e \in E(\Gamma^c) \cup L(\Gamma^c)\) respectively such that for \(\vv{e} = (e,v)\), \(\sigma_v\) is a face of \(\sigma_e\). 
        \item A balanced slope assignment \(m_{\upalpha}\colon \vv{E}(\Gamma^c) \cup \vv{L}(\Gamma^c) \to N\) such that for \(e \in \vv{E}(\Gamma^c) \cup \vv{L}(\Gamma^c)\), \(m_{\upalpha}(\vv{e}) \in N_{\sigma_e} \subseteq N\). 

    \end{itemize}
    Further, this data must satisfy the following stability conditions:
    \begin{itemize}
        \item Each \(v \in V(\Gamma^c) \setminus V(\Gamma)\) is bivalent and \(\dim(\sigma_v) = 0\) or \(1\). 
        \item For each \(v \in V(\Gamma^c) \setminus V(\Gamma)\) supporting edges \(e_1\), \(e_2\), we have \(m_{\upalpha}(\vv{e}_1) = -m_{\upalpha}(\vv{e}_2)\) and \(\sigma_{e_1} \neq \sigma_{e_2}\). 
    \end{itemize}
    Given \(\upalpha\) and \(\Sigma\), the set of combinatorial types with dual graph stabilising to \(\Gamma \in \Gamma_{0,n}\) will be denoted \(\mathcal{C}(\Gamma, \Sigma, \upalpha)\). 
\end{definition}

A tropical map induces a combinatorial type in the following way.

\begin{construction}[Combinatorial Type of a Tropical Map]
    \label{construction} Consider \(\sqC_{\Gamma} \in \mathcal{M}_{0,n}^{\mathrm{trop}}\) enhancing \(\Gamma \in \Gamma_{0,n}\) with balanced slope assignment \(m_{\upalpha}\) for \(\upalpha \in N_0^n\). Let \(f\colon \sqC_{\Gamma} \to \Sigma\) be a tropical map. Then, \(f\) has combinatorial type \(c \in \mathcal{C}(\Gamma, \Sigma, \upalpha)\), constructed as follows:
    \begin{itemize}
        \item For each \(e \in E(\sqC_{\Gamma}) \cup L(\sqC_{\Gamma})\) such that \(f(e)\) intersects a 0 or 1-dimensional cone of \(\Sigma\), subdivide \(e\) to introduce a new bivalent vertex. All such subdivisions give a semi-stable metric graph \(\sqC_{\Gamma^c}\) with corresponding combinatorial type \(\Gamma^c\) which serves as the semi-stable graph for \(c\). The balanced slope assignment for \(\Gamma^c\) is induced from \(\Gamma\). 
        \item Considering now \(f\colon \sqC_{\Gamma^c} \to \Sigma\), for each \(v \in V(\sqC_{\Gamma^c})\), \(e \in E(\sqC_{\Gamma^c}) \cup L(\sqC_{\Gamma^c})\) set \(\sigma_v\), \(\sigma_e\) to be the smallest cones of \(\Sigma\) containing \(f(v)\), \(f(e)\) respectively. 
    \end{itemize}
\end{construction}

All examples in this paper will be for \(X_\Sigma = \mathbb{P}^2\). As such, it will be convenient to fix the following notation for the cones of \(\Sigma_{\mathbb{P}^2}\):

\begin{center}
    \begin{tikzpicture}[scale = 1.75]
        \filldraw[purple!10] (-0.7, -0.7) -- (-0.7, 0.3) -- (0,1) -- (1,1) -- (1,0) -- (0.3, -0.7) -- (-0.7, -0.7);
        \filldraw (0,0) circle (0.75pt);
        \draw[thick] (0,0) -- (0,1) -- (0,0) -- (1,0) -- (0,0) -- (-0.7, -0.7);
        \node at (0.5, 0.5) {\(\sigma_1\)};
        \node at (0.15, -0.35) {\(\sigma_3\)};
        \node at (-0.35, 0.15) {\(\sigma_2\)};
        \node at (1.15, 0) {\(\tau_1\)};
        \node at (-0.2, 1) {\(\tau_2\)};
        \node at (-0.85, -0.7) {\(\tau_3\)};
        \node at (0.15, 0.15) {\(o\)};
    \end{tikzpicture}
\end{center} 

\begin{example}[Tropical map and Combinatorial Type]
   Let \((C, p_1, \ldots, p_4)\) be smooth and choose \(\upalpha = \left(\colVec{1}{0}, \colVec{0}{1}, \colVec{\text{-}1}{0}, \colVec{0}{\text{-}1}\right)\). We draw \(\Gamma_C\) with the balanced slope assignment \(m_{\upalpha}\): 

    \begin{center}
        \begin{tikzpicture}[scale = 1.75]
            \filldraw[red] (0,0) circle (1pt);
            \draw[thick, <->,red] (0,1) -- (0, -1);
            \draw[thick, <->,red] (-1,0) -- (1,0);
            \node at (0.18, 1) {\(\ell_2\)};
            \node at (1.13, 0) {\(\ell_1\)};
            \node at (-1.13, 0) {\(\ell_3\)};
            \node at (0.18, -1) {\(\ell_4\)};
            \node at (0.12, 0.12) {\(v\)};
        \end{tikzpicture}
    \end{center}

    Defining a tropical map \(\sqC_{\Gamma_C} \to \Sigma_{\mathbb{P}^2}\) amounts to choosing \(f(v)\) and drawing \(\ell_i\) with slope \(\upalpha_i\). If there are intersections of the edges of the image of \(\sqC_{\Gamma_C}\) with the 0 or 1-dimensional cones of \(\Sigma_{\mathbb{P}^2}\), we subdivide these edges, denoting the new vertices using hollow vertices. Note that this is the only way that bivalent vertices can appear: this is a stability condition on the combinatorial types. 
    
    Here are two examples of tropical maps \(\varphi\), \(\varphi'\colon \sqC_{\Gamma_C} \to \Sigma_{\mathbb{P}^2}\) such that \(f(v) \in \sigma_1\). We introduce  new bivalent vertices \(w_1\) and \(w_2\) where the image of \(\sqC_{\Gamma_C}\) intersects the sub-maximal cones of \(\Sigma\) and label the new finite edges \(e\) and \(f\).  

    \begin{center}
        \begin{tikzpicture}[scale = 2.5]
            \draw[thick] (0,0) -- (1,0) -- (0,0) -- (0,1) -- (0,0) -- (-0.7, -0.7);
            \draw[red, thick, <->] (0.2, 1) -- (0.2, -0.7);
            \draw[red, thick, <->] (-0.7, 0.3) -- (1, 0.3);
            \node at (0.1, 0.38) {\(e\)};
            \node at (0.28, 0.15) {\(f\)};
            \filldraw[red] (0.2, 0.3) circle (0.75pt);
            \filldraw[white] (0.2,0) circle (0.75pt);
            \draw[red, thick] (0.2, 0) circle (0.75pt);
            \filldraw[white] (0,0.3) circle (0.75pt);
            \draw[red, thick] (0, 0.3) circle (0.75pt);
            \node at (0.28, 0.38) {\(v\)};
            \node at (1.08, 0.3) {\(\ell_1\)};
            \node at (0.33, 1) {\(\ell_2\)};
            \node at (-0.78, 0.3) {\(\ell_3\)};
            \node at (0.33, -0.7) {\(\ell_4\)};
            \node at (-0.13, 0.38) {\(w_1\)};
            \node at (0.33, -0.13) {\(w_2\)};
            \node at (0, -1) {\(\varphi\)};
            \node at (3, -1) {\(\varphi'\)};

            \draw[thick] (0+3,0) -- (1+3,0) -- (0+3,0) -- (0+3,1) -- (0+3,0) -- (-0.7+3, -0.7);
            \filldraw[red] (0.7+3, 0.2) circle (0.75pt);
            \draw[red, thick, <->] (0.7+3, 1) -- (0.7+3, -0.7);
            \draw[red, thick, <->] (-0.7+3, 0.2) -- (1+3, 0.2);
            \filldraw[white] (3,0.2) circle (0.75pt);
            \draw[red, thick] (3, 0.2) circle (0.75pt);
            \filldraw[white] (3.7, 0) circle (0.75pt);
            \draw[red, thick] (3.7, 0) circle (0.75pt);
            \node at (1.08+3, 0.2) {\(\ell_1\)};
            \node at (-0.8+3, 0.2) {\(\ell_3\)};
            \node at (0.7+0.13+3, -0.7) {\(\ell_4\)};
            \node at (0.7+0.13+3, 1) {\(\ell_2\)};
            \node at (0.7+3+0.08, 0.2+0.08) {\(v\)};
            \node at (0.7 + 3 + 0.08, 0.1) {\(f\)};
            \node at (-0.13 + 3, 0.15+0.13) {\(w_1\)};
            \node at (0.7+3+0.12, -0.1) {\(w_2\)};
            \node at (0.35+3, 0.28) {\(e\)};
        \end{tikzpicture}
    \end{center} Combinatorially, \(\mathrm{Im}(\varphi)\) and \(\mathrm{Im}(\varphi')\) are indistinguishable: changing the scale horizontally or vertically will cause them to appear identical. Equivalently, in \(\mathrm{Im}(\varphi)\), one can make \(e\) longer and \(f\) shorter to match \(\mathrm{Im}(\varphi')\). This process will not change the cones of \(\Sigma_{\mathbb{P}^2}\) that the vertices (stable or unstable) are mapped to.

    Following Construction \ref{construction}, we give the combinatorial types of maps \(\varphi\), \(\varphi'\). In each case, the semi-stable graph looks as follows: 
    
    \begin{center}
        \begin{tikzpicture}[scale = 1.75]
            \filldraw[red] (0,0) circle (1pt);
            \draw[thick, <->,red] (0,1) -- (0, -1);
            \draw[thick, <->,red] (-1,0) -- (1,0);
            \node at (0.18, 1) {\(\ell_2\)};
            \node at (1.13, 0) {\(\ell_1\)};
            \node at (-1.13, 0) {\(\ell_3\)};
            \node at (0.18, -1) {\(\ell_4\)};
            \node at (0.12, 0.1) {\(v\)};
            \filldraw[white] (0,-0.5) circle (0.75pt);
            \draw[red,thick] (0, -0.5) circle (0.75pt);
            \filldraw[white] (-0.5,0) circle (0.75pt);
            \draw[red,thick] (-0.5, 0) circle (0.75pt);
            \node at (-0.25, 0.1) {\(e\)};
            \node at (0.12, -0.25) {\(f\)};
            \node at (-0.2, -0.5) {\(w_2\)};
            \node at (-0.5, -0.15) {\(w_1\)};
        \end{tikzpicture}
    \end{center} 
    Then, for both maps, the cone assignments of vertices are \[\sigma_v = \sigma_1, \quad \sigma_{w_1} = \tau_2, \quad \sigma_{w_2} = \tau_1\] and the assignments of finite edges and legs are 
    \[\text{\(\sigma_{\ell_1}\), \(\sigma_{\ell_2}\), \(\sigma_e\), \(\sigma_f = \sigma_1\),} \quad \sigma_{\ell_3} = \sigma_2, \quad \sigma_{\ell_4} = \sigma_3.\] Therefore, by Definition \ref{combinTypeDef}, \(\varphi\) and \(\varphi'\) have the same combinatorial type. 
    
    Consider now \(f(v) \in \sigma_2\). We have the following tropical maps \(\psi_i\) which induce new bivalent vertices \(s_1\), \(s_2\) and new finite edges \(g\), \(h\): 

    \begin{center}
    \resizebox{\textwidth}{!}{%
        \begin{tikzpicture}[scale = 2.25]
            \draw[thick] (0,0) -- (1,0) -- (0,0) -- (0,1) -- (0,0) -- (-0.7, -0.7);
            \filldraw[red] (-0.4, 0.3) circle (0.75pt);
            \draw[red, thick, <->] (-0.4, 1) -- (-0.4, -0.7);
            \draw[red, thick, <->] (-0.7, 0.3) -- (1, 0.3);
            \filldraw[white] (0,0.3) circle (0.75pt);
            \draw[red, thick] (0, 0.3) circle (0.75pt);
            \filldraw[white] (-0.4,-0.4) circle (0.75pt);
            \draw[red, thick] (-0.4, -0.4) circle (0.75pt);
            \node at (-0.48, -0.05) {\(g\)};
            \node at (1, -0.15 +0.3) {\(\ell_1\)};
            \node at (-0.4 + 0.15, 1) {\(\ell_2\)};
            \node at (-0.4 + 0.15, -0.7) {\(\ell_4\)};
            \node at (-0.7, -0.15+0.3) {\(\ell_3\)};
            \node at (-0.2, 0.4) {\(h\)};
            \node at (-0.48, 0.38) {\(v\)};
            \node at (0.15, 0.4) {\(s_1\)};
            \node at (-0.25, -0.4) {\(s_2\)};
            \node at (0, -1) {\(\psi_1\)};
            
            \draw[thick] (0+3,0) -- (0+3,0) -- (0+3,1) -- (0+3,0) -- (-0.7+3, -0.7);
            \filldraw[red] (-0.4+3, 0) circle (0.75pt);
            \draw[red, thick, <->] (-0.4+3, 1) -- (-0.4+3, -0.7);
            \draw[red, thick, <->] (-0.7+3, 0) -- (1+3, 0);
            \filldraw[white] (3,0) circle (0.75pt);
            \draw[red, thick] (3, 0) circle (0.75pt);
            \filldraw[white] (2.6,-0.4) circle (0.75pt);
            \draw[red, thick] (2.6, -0.4) circle (0.75pt);
            \node at (-0.25+3, -0.4) {\(s_2\)};
            \node at (-0.4 + 0.15+3, 1) {\(\ell_2\)};
            \node at (-0.4 + 0.15+3, -0.7) {\(\ell_4\)};
            \node at (3+0.1, -0.12) {\(s_1\)};
            \node at (1+3, -0.15) {\(\ell_1\)};
            \node at (-0.7+3, -0.15) {\(\ell_3\)};
            \node at (-0.48+3, 0.1-0.02) {\(v\)};
            \node at (-0.2 + 3, 0.1) {\(h\)};
            \node at (-0.48 + 3, -0.2) {\(g\)};
            \node at (3, -1) {\(\psi_2\)};
            
            \draw[thick] (0+6,0) -- (1+6,0) -- (0+6,0) -- (0+6,1) -- (0+6,0) -- (-0.7+6, -0.7);
            \filldraw[red] (-0.4+6, -0.2) circle (0.75pt);
            \draw[red, thick, <->] (-0.4+6, 1) -- (-0.4+6, -0.7);
            \draw[red, thick, <->] (-0.7+6, -0.2) -- (1+6, -0.2);
            \filldraw[white] (-0.4+6,-0.4) circle (0.75pt);
            \draw[red, thick] (-0.4+6, -0.4) circle (0.75pt);
            \filldraw[white] (-0.2+6,-0.2) circle (0.75pt);
            \draw[red, thick] (-0.2+6, -0.2) circle (0.75pt);
            \node at (-0.25+6, -0.45) {\(s_2\)};
            \node at (-0.4 + 0.15+6, 1) {\(\ell_2\)};
            \node at (-0.4 + 0.15+6, -0.7) {\(\ell_4\)};
            \node at (1+6, -0.15 -0.2) {\(\ell_1\)};
            \node at (-0.7+6, -0.15-0.2) {\(\ell_3\)};
            \node at (-0.48+6, -0.1-0.02) {\(v\)};
            \node at (-0.2+0.12+6, -0.3) {\(s_1\)};
            \node at (-0.3+6,-0.1) {\(h\)}; 
            \node at (-0.48+6,-0.3) {\(g\)}; 
            \node at (6, -1) {\(\psi_3\)};
        \end{tikzpicture}
        }
    \end{center}
    Decreasing the length of \(g\) takes us from left to right. For all \(\psi_i\), the semi-stable graph looks as follows: 

    \begin{center}
        \begin{tikzpicture}[scale = 1.75]
            \filldraw[red] (0,0) circle (1pt);
            \draw[thick, <->,red] (0,1) -- (0, -1);
            \draw[thick, <->,red] (-1,0) -- (1,0);
            \node at (0.2, 1) {\(\ell_2\)};
            \node at (1.12, 0) {\(\ell_1\)};
            \node at (-1.12, 0) {\(\ell_3\)};
            \node at (0.2, -1) {\(\ell_4\)};
            \filldraw[white] (0,-0.5) circle (0.75pt);
            \draw[red,thick] (0, -0.5) circle (0.75pt);
            \filldraw[white] (0.5,0) circle (0.75pt);
            \draw[red,thick] (0.5, 0) circle (0.75pt);
            \node at (-0.12, 0.12) {\(v\)};
            \node at (-0.15, -0.5) {\(s_2\)};
            \node at (0.5, -0.15) {\(s_1\)};
            \node at (0.12, -0.25) {\(g\)};
            \node at (0.25, 0.12) {\(h\)};
        \end{tikzpicture}
    \end{center} For each \(\psi_i\), the cone assignment for \(s_1\) is different. We have \(\sigma_{s_1} = \tau_2\) for \(\psi_1\), \(\sigma_{s_1} = o\) for \(\psi_2\) and \(\sigma_{s_1} = \tau_3\) for \(\psi_3\). Therefore, by Definition \ref{combinTypeDef}, all of the \(\psi_i\) have different combinatorial types. \begin{flushright} \textbf{\emph{End of example.}}\end{flushright}
\end{example}

\subsection{Realisability} The stratification of \(\,\overbar{\mathcal{M}}(X_\Sigma, \upalpha)\) is given by \emph{realisable} combinatorial types.  

\begin{definition}[Realisable Combinatorial Type]\label{realCTdef}
    Let \(\Gamma \in \Gamma_{0,n}\) and choose \(c \in \mathcal{C}(\Gamma, \Sigma, \upalpha)\) with semi-stable graph \(\Gamma^c\). Then, \(c\) is \emph{realisable} if there exists a metric enhancement \(\sqC_{\Gamma^c}\) of \(\Gamma^c\) and a tropical map \(f\colon \sqC_{\Gamma^c} \to \Sigma\) such that the combinatorial type of \(f\), as per Construction \ref{construction}, is \(c\). 
\end{definition}

The combinatorial types we have seen so far have been induced by tropical maps, and so have been realisable. We now give an example of a non-realisable combinatorial type. 

\begin{example}[Non-Realisable Combinatorial Type] 
    Let \(\upalpha = (\colVec{1}{0}, \colVec{0}{1}, \colVec{-1}{0}, \colVec{0}{-1})\) and choose \(C = \substack{\\ \begin{tikzpicture} \draw (0, 0) -- (0.6, 0.6);
        \draw (1,0) -- (0.4, 0.6);
        \filldraw (0.1,0.1) circle (1pt);
        \filldraw (0.3,0.3) circle (1pt);
        \filldraw (0.9,0.1) circle (1pt);
        \filldraw (0.7,0.3) circle (1pt);
        \node at (-0.1, 0.1) {\tiny{1}};
        \node at (0.1, 0.3) {\tiny{2}};
        \node at (0.9, 0.3) {\tiny{3}};
        \node at (1.1, 0.1) {\tiny{4}}; \end{tikzpicture}} \in \,\overbar{\mathcal{M}}_{0,4}\). The balanced slope assignment \(m_{\upalpha}\) for \(\Gamma_C\) looks as follows:

    \begin{center}
        \begin{tikzpicture}[scale = 2.2]
            \filldraw[red] (0,0) circle (0.75pt);
            \draw[thick,red](0,0) -- (1,0);
            \filldraw[red] (1,0) circle (0.75pt);
            \draw[thick, ->,red] (0,0) -- (-0.5, 0.5); 
            \draw[thick, ->,red] (0,0) -- (-0.5, -0.5);
            \draw[thick, ->,red] (1,0) -- (1.5, 0.5);
            \draw[thick, ->,red] (1,0) -- (1.5, -0.5);
            \node at (1.15, 0.5) {\(\colVec{\text{-}1}{0}\)}; 
            \node at (1.15, -0.5) {\(\colVec{0}{\text{-}1}\)}; 
            \node at (-0.15, 0.5) {\(\colVec{1}{0}\)}; 
            \node at (-0.15, -0.5) {\(\colVec{0}{1}\)}; 
            \node at (0.15, 0.2) {\(\colVec{\text{-}1}{\text{-}1}\)};
            \draw[red, thick, ->] (0,0) -- (0.3,0);
            \draw[red, thick, ->] (1,0) -- (0.7,0);
            \node at (0.85, -0.2) {\(\colVec{1}{1}\)};
            \node at (2.5,0) {\(\rightsquigarrow\)};
            \node at (-0.65, 0.5) {\(\ell_1\)};
            \node at (-0.65, -0.5) {\(\ell_2\)};
            \node at (1.65, 0.5) {\(\ell_3\)};
            \node at (1.65, -0.5) {\(\ell_4\)};
            \node at (-0.15, 0) {\(v\)};
            \node at (1.15, 0) {\(w\)};
            \node at (0.5, 0.15) {\(\vv{e}\)};
            \filldraw[red] (4,-0.25) circle (0.75pt);
            \draw[thick, ->,red] (4,-0.25) -- (3.5,-0.25);
            \draw[thick, ->,red] (4,-0.25) -- (4, -0.5-0.25);
            \draw[thick,red] (4,-0.25) -- (4.35, 0.35-0.25);
            \filldraw[red] (4.35, 0.35-0.25) circle (0.75pt);
            \draw[thick, ->,red] (4.35, 0.35-0.25) -- (4.85, 0.35-0.25);
            \draw[thick, ->,red] (4.35, 0.35-0.25) -- (4.35, 0.85-0.25);
            \node at (5, 0.35-0.25) {\(\ell_1\)};
            \node at (4.5, 0.85-0.25) {\(\ell_2\)};
            \node at (3.5,-0.4) {\(\ell_3\)};
            \node at (3.85, -0.5-0.25) {\(\ell_4\)};
            \node at (4.15, -0.25) {\(w\)};
            \node at (4.35, 0.35-0.4) {\(v\)};
            \node at (4.25-0.15, -0.15+0.2) {\(\vvleft{e}\)}; 
        \end{tikzpicture}
    \end{center}

    Consider \(c \in \mathcal{C}(\Gamma_C, \Sigma, \upalpha)\) with the following cone assignments: 
    
        \begin{table}[H]
            \centering
            \begin{tabular}{|c|c|}
                \hline
                Vertex/Edge & Cone \\ \hline
                \(v\) & \(\sigma_3\) \\
                \(w\) & \(\tau_3\) \\ 
                \(\ell_i\), \(e\) & \(\sigma_1\) \\ \hline
            \end{tabular}
        \end{table}
    
    Suppose \(c\) were realisable. Then, there would exist a tropical map \(f\colon \sqC_{\Gamma_C} \to \Sigma\) such that \(f(v) \in \sigma_3^\circ\). But, to have \(f(w) \in \tau^\circ_3\), \(\vv{e}\) must cross \(\tau_3\). However, since \(m_{\upalpha}(\vv{e})\) and \(m_{\upalpha}(\tau_3)\) are parallel, this cannot happen. Therefore, \(c\) is not realisable. \begin{flushright}\emph{\textbf{End of example.}}\end{flushright}
    \label{ct}
\end{example}

We conclude this subsection with the following lemma. 

\begin{lemma}[Strata: Collapsed Maps]
    The strata of \(\,\overbar{\mathcal{M}}(X_\Sigma, \upalpha)\) are indexed by realisable combinatorial types of maps from genus 0 metric curves to \(\Sigma\).
\end{lemma}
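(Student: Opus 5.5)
The plan is to deduce the lemma from the tropicalisation of logarithmic stable maps to toric targets in \cite{skeleton}, together with Corollary \ref{corollary}. I would proceed in three steps.

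First, recall that \(\,\overbar{\mathcal{M}}(X_\Sigma, \upalpha)\) is a logarithmic stack whose characteristic monoid sheaf is constant along locally closed strata. At a geometric point \([f\colon C \to X_\Sigma]\), the minimal base monoid \(P\) is dual to the cone of tropical maps of a fixed combinatorial type. By \cite[Section 5.4]{tropLog} and \cite{skeleton}, the tropicalisation of such a point is a tropical map \(\sqC_{\Gamma_C} \to \Sigma\), whose slopes are the balanced slope assignment \(m_{\upalpha}\) of Lemma \ref{slopeLemma}. Applying Construction \ref{construction} to this tropical map gives a combinatorial type \(c \in \mathcal{C}(\Gamma, \Sigma, \upalpha)\), where \(\Gamma\) is the stabilisation of \(\Gamma_C\). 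This type is realisable by definition, since it is induced by an actual tropical map. It records exactly which stratum of \(\partial X_\Sigma\) each component and node lands in. Concretely, \(\sigma_v\) is the cone whose torus orbit contains \(f(C_v)\) generically, and the bivalent vertices account for points of the stable graph where the image meets a ray or the origin.

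Second, I would show that the locus \(\mathcal{M}(c)\) of maps with a fixed type \(c\) is locally closed and that these loci are disjoint and cover the space. Disjointness and covering are immediate because every point has a unique type. For local closedness I would use the fact that the type is determined by the characteristic monoid together with the dual graph and the cone assignments, and that these are upper semicontinuous under generisation. Specialisation corresponds to passing to a face of the tropical moduli cone, so the closure of \(\mathcal{M}(c)\) is a union of \(\mathcal{M}(c')\) for types \(c'\) that specialise \(c\).

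Third, I would show that every realisable type actually occurs, so that the indexing set is exactly the set of realisable types. Given a tropical map of type \(c\), I would build a curve by gluing copies of \(\mathbb{P}^1\) along \(\Gamma^c\) (contracting the bivalent vertices). Each stable vertex \(v\) then gets a map into the orbit closure \(V(\sigma_v)\), given by monomials with exponents determined by the slopes at \(v\). These maps need to match at the nodes, and a logarithmic enhancement needs to exist. In genus 0 the matching conditions are torus-translation conditions along a tree, so they can always be solved inductively from a root vertex. The existence of the logarithmic enhancement follows from Nishinou--Siebert/\cite{skeleton}-style realisability of tree types.

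I expect this third step to be the main obstacle, and specifically the compatibility of the unsaturated logarithmic structure with the gluing. To handle it, I would invoke Proposition \ref{3.7.4}, which says the unsaturated log map is determined by its underlying map and its combinatorial type. Together with Corollary \ref{corollary}, this lets me check the stratification purely on underlying stable maps with prescribed boundary interaction, rather than on log structures.
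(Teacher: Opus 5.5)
Your overall route is the paper's: its proof of this lemma consists solely of citing \cite[Theorem B]{skeleton}, and your Steps 1 and 3 ultimately rest on that same genus-0 tropicalisation and realisability result. However, the explicit realisation you sketch in Step 3 fails as written, because you contract the bivalent vertices. For a log map, the tropical image of each edge or leg lies in a single cone $\sigma_e$ (the cone of the orbit containing the corresponding node or marked point), and $\sigma_e$ contains the cones of its endpoints. So wherever the image of an edge or leg passes through a $0$- or $1$-dimensional cone, the source curve must contain a genuine component with two special points, mapped non-constantly (into $D_\rho$ when the crossing is on a ray $\rho$). The paper treats these components exactly this way. The term $-|V(\Gamma^c)\setminus V(\Gamma)|$ in $R_c$ (see \eqref{form}) accounts for the $\mathbb{C}^*$ of automorphisms of these components, and Section~\ref{satake} exhibits one mapping by $[S:T]\mapsto[S^r:0:T^r]$. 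A curve glued along $\Gamma$ alone carries no log map of type $c$ as soon as some edge or leg crosses a ray or the origin. You need a $\mathbb{P}^1$ for every vertex of $\Gamma^c$, as your own Step 1 already presupposes when it takes $\Gamma$ to be the stabilisation of $\Gamma_C$.

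Second, Proposition~\ref{3.7.4} cannot dispose of what you identify as the main obstacle. It is a uniqueness statement: together with Corollary~\ref{corollary} it shows that an unsaturated log enhancement of a given type is unique if it exists, so no stratum is multiply covered. It says nothing about whether your glued underlying map admits a log structure of type $c$ at all. Moreover, ``checking the stratification on underlying stable maps'' presupposes knowing which underlying maps lie in $\mathrm{Im}(X_\Sigma, \partial X_\Sigma)(\C)$. That existence is precisely the genus-0 realisability statement of \cite[Theorem B]{skeleton}, applied to saturated maps and then transported to the unsaturated space along $\nu$. This is the citation that constitutes the paper's proof, and once you invoke it directly, the hand-made gluing in Step 3 is unnecessary.
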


\begin{proof}
    See, for example, \cite[Theorem B]{skeleton}.
\end{proof}

\begin{remark}
    There are various notions of realisability in the literature. A given combinatorial type \(c\) is said to be realisable if it is the combinatorial type of a logarithmic stable map to \(X_\Sigma\). If instead it is the combinatorial type of a map to the Artin fan \(\mathcal{A}_\Sigma\), \(c\) is said to be \emph{weakly realisable} (see \cite[Proposition 1.12]{dan}. Note that the terminology ``representable'' is used in place of ``realisable''). However, when source curves are of genus 0, the two notions are equivalent \cite[Theorem B]{skeleton}. As such, moving forward, we will refer to these combinatorial types simply as ``realisable''. 
\end{remark}

\subsection{The Grothendieck ring}\label{grothRing}

Let \(k\) be a field and denote by \(\mathrm{Var}/k\) the category of \(k\)-varieties.

\begin{definition}[{See e.g. \cite[Definition 2.1]{nereeja}}]
    The \emph{Grothendieck Ring}, \(K_0(\mathrm{Var}/k)\), is the quotient of the free abelian group generated by isomorphism classes \([\cdot]\) of \(k\)-varieties by the relation 
    \[[X \setminus Y ] = [X] - [Y]\] where \(Y \subseteq X\) is a closed subscheme. The fibre product over \(k\) induces a ring structure defined by \[[X] \cdot [Y] = [(X \times_k Y)_{\mathrm{red}}].\]
\end{definition}

The main proof of this paper (see Section \ref{construct}) hinges on the following property of the Grothendieck ring.

\begin{proposition}[{See e.g. \cite[Lemma 2.2]{nereeja}}]
        If a variety \(X\) is a finite disjoint union of locally closed subvarieties \[X = \bigsqcup_{i = 1}^n X_i\] then \[[X] = \sum_{i = 1}^n [X_i].\]\label{splitter}
\end{proposition}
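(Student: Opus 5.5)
The plan is to reduce to the case of two pieces and then induct on the number of pieces, using only the defining relation \([X\setminus Y]=[X]-[Y]\) for \(Y\subseteq X\) closed. The difficulty is that an arbitrary finite partition into locally closed subvarieties need not have any piece that is closed in \(X\). So the first step is to find, in any such partition, a piece that is closed.

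I will use Noetherian induction on \(X\). Each \(X_i\) is locally closed, so its closure \(\overline{X_i}\) is a closed subset, and \(X=\bigcup_i \overline{X_i}\). Take an irreducible component \(Z\) of \(X\). Since \(Z\) is irreducible and covered by the finitely many closed sets \(\overline{X_i}\cap Z\), some \(\overline{X_i}\) contains \(Z\). Because \(X_i\) is open in \(\overline{X_i}\), it follows that \(X_i\cap Z\) is nonempty and open in \(Z\). A cleaner formulation avoids choosing a component: pick \(i\) such that \(\overline{X_i}\) is maximal among the closures. Then \(X_i\) contains a nonempty open subset \(U\) of \(X\); for example, take \(U=X_i\setminus\bigcup_{j\ne i}\overline{X_j}\), after shrinking to a generic point of a component not met by the other closures.

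Rather than locating a closed piece directly, the cleanest route is the following. Choose a nonempty open \(U\subseteq X\) contained in a single \(X_i\). This is possible: a generic point \(\eta\) of an irreducible component lies in exactly one \(X_i\). Every \(X_j\) with \(j\neq i\) avoids \(\eta\), so \(\overline{X_j}\) avoids \(\eta\) as well (otherwise \(\overline{X_j}\) contains the component, \(X_j\) is open in it and hence contains \(\eta\)). Now set \(U\) to be a neighbourhood of \(\eta\) inside \(X_i\) that misses all \(\overline{X_j}\) for \(j\ne i\). Apply the relation twice:
\[
[X]=[U]+[X\setminus U], \qquad [X_i]=[U]+[X_i\setminus U].
\]
Here \(X\setminus U\) is closed and is partitioned by the locally closed sets \(X_j\) (\(j\neq i\)) together with \(X_i\setminus U\). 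This is a finite partition of a strictly smaller closed subvariety. By Noetherian induction,
\[
[X\setminus U]=\sum_{j\ne i}[X_j]+[X_i\setminus U],
\]
and combining the displays gives the claim. The base case \(X=\emptyset\) is trivial.

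I expect the main obstacle to be the point-set step: producing \(U\) open in \(X\) and contained in a single piece. In particular, one must check that \(U\) being open in \(X_i\) is compatible with \(U\) being open in \(X\). This holds because \(U\) is open in \(\overline{X_i}\) and misses the other closures, and those closures together with \(\overline{X_i}\) cover \(X\). A second, minor point is the reducedness convention. All classes are taken on reduced structures, so the subvarieties should be given their reduced induced structures, which the relation in \(K_0(\mathrm{Var}/k)\) permits since \([X]=[X_{\mathrm{red}}]\).
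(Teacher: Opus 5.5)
Your proof is correct, and it is the standard Noetherian-induction argument: find a nonempty open $U\subseteq X$ contained in a single piece, via the generic point of an irreducible component, then recurse on the proper closed subset $X\setminus U$; the paper does not reproduce this but simply defers to \cite[Lemma 2.2]{nereeja}. Delete the ``maximal closure'' aside in your second paragraph, which is false as stated (an inclusion-maximal $\overline{X_i}$ need not contain any irreducible component of $X$, and then $X_i$ contains no nonempty open subset of $X$); the generic-point choice is what makes the argument work, and there you can simply take $U=X\setminus\bigcup_{j\neq i}\overline{X_j}$ with $X_i\ni\eta$.
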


\begin{proof}
    See \cite[Lemma 2.2]{nereeja}. 
\end{proof} This property will be used in the proof of Theorem \ref{thm: body}. 

\section{Collapsed stable maps to a toric surface} \label{collapsed}

Fix a proper toric surface \(X_\Sigma\) with fan \(\Sigma\) in \(N_\R\) and consider the moduli space of unsaturated (see Section \ref{saturation} for a discussion of saturation) logarithmic stable maps \(\,\overbar{\mathcal{M}}(X_\Sigma, \upalpha)\). As \(\upalpha\) varies in \(N_0^n\), the topology of \(\,\overbar{\mathcal{M}}(X_\Sigma, \upalpha)\) changes. Following from work of Kannan, \cite[Theorem B]{kannan}, we aim to answer the following question: 

\begin{center}
    \(\mathrm{\boldsymbol{Q^n:}}\) As a function of \(\upalpha \in N^n_0\), where is \(\left[ \,\overbar{\mathcal{M}}(X_\Sigma, \upalpha)\right] \in K_0(\mathrm{Var}/\mathbb{C})\) constant? 
\end{center} We use the stratification of \(\,\overbar{\mathcal{M}}(X_\Sigma, \upalpha)\) by realisable combinatorial types (see Section \ref{stratif} for details of the stratification) to answer this question, giving Theorem \ref{thm: body}. 

The outline of this section is as follows. In Section \ref{formula}, given a combinatorial type \(c\), we give a formula \eqref{form} for the class in the Grothendieck ring of the corresponding locally closed stratum \(\mathcal{M}(c) \subseteq \,\overbar{\mathcal{M}}(X_\Sigma, \upalpha)\). Then, in Section \ref{equiv}, we define a notion of equivalence called \emph{\(\Sigma\)-equivalence} (see Definition \ref{sigmaEquiv}) for tangency data such that two lists of tangencies are sigma equivalent if and only if they are in the same chamber of the \(\Sigma\)-slope decomposition.  

We then build up to the proof. Given \(\upalpha\), \(\upalpha'\) in the same chamber of the \(\Sigma\)-slope decomposition, to build the bijection of strata of \(\,\overbar{\mathcal{M}}(X_\Sigma, \upalpha)\) and \(\,\overbar{\mathcal{M}}(X_\Sigma, \upalpha')\), we take a combinatorial type \(c\) for \(\,\overbar{\mathcal{M}}(X_\Sigma, \upalpha)\) and build a realisable combinatorial type \(c'\) for \(\,\overbar{\mathcal{M}}(X_\Sigma, \upalpha')\) matching the combinatorial data of \(c\), thus giving \([\mathcal{M}(c)] = [\mathcal{M}(c')] \in K_0(\mathrm{Var}/\C)\). We do this in three steps. 

\begin{enumerate}
    \item In Section \ref{fan}, we define a subdivision of \(\Sigma\) depending on \(\upalpha\), denoted \(\tilde{\Sigma}_{\upalpha}\) (see Definition \ref{ssFan}). Given this, in Section \ref{lift}, we explain how to lift \(c\) to a realisable combinatorial type \(\tilde{c}\) for the space \(\,\overbar{\mathcal{M}}(X_{\tilde{\Sigma}_{\upalpha}}, \upalpha)\). 
    \item Given \(\tilde{c}\), in Section \ref{construct}, given that \(\upalpha\), \(\upalpha'\) are in the same chamber of the \(\Sigma\)-slope decomposition, we explain how to construct a tropical map to \(\tilde{\Sigma}_{\upalpha'}\) matching the data of \(\tilde{c}\). We then take the combinatorial type of this map to produce a combinatorial type \(\tilde{c}'\) for \(\,\overbar{\mathcal{M}}(X_{\tilde{\Sigma}_{\upalpha'}}, \upalpha')\). 
    \item We finally apply a forgetful map \(\,\overbar{\mathcal{M}}(X_{\tilde{\Sigma}_{\upalpha'}}, \upalpha') \to \,\overbar{\mathcal{M}}(X_\Sigma, \upalpha')\) to obtain a combinatorial type \(c'\) which matches the combinatorial data of \(c\), giving that \([\mathcal{M}(c)] = [\mathcal{M}(c')] \in K_0(\mathrm{Var}/\mathbb{C})\).  
\end{enumerate} This establishes a bijection \(c \leftrightarrow c'\) between the strata of \(\,\overbar{\mathcal{M}}(X_{\Sigma}, \upalpha)\) and \(\,\overbar{\mathcal{M}}(X_{\Sigma}, \upalpha')\) such that \[[\mathcal{M}(c)] = [\mathcal{M}(c')] \in K_0(\mathrm{Var}/\mathbb{C}).\] It then follows from Lemma \ref{splitter} that \([\,\overbar{\mathcal{M}}(X_\Sigma, \upalpha)] = [\,\overbar{\mathcal{M}}(X_\Sigma, \upalpha')] \in K_0(\mathrm{Var}/\C)\), proving Theorem \ref{thm: body}. 

Finally, in Section \ref{other}, we define the walls that give the \(\Sigma\)-slope decomposition and discuss alternative topological invariants of \(\,\overbar{\mathcal{M}}(X_\Sigma, \upalpha)\) one can consider.

    \subsection{Strata formula}\label{formula}

Given a combinatorial type \(c \in \mathcal{C}(\Gamma, \Sigma, \upalpha)\) (see Definition \ref{combinTypeDef}) denote the locally closed strata of maps with combinatorial type \(c\) by \(\mathcal{M}(c) \subseteq \,\overbar{\mathcal{M}}(X_\Sigma, \upalpha)\). 

\begin{lemma}
     Let \(c \in \mathcal{C}(\Gamma, \Sigma, \upalpha)\) have semi-stable graph \(\Gamma^c\). Define \[R_c = \sum_{e \in E(\Gamma^c)}\dim(\sigma_e) - \sum_{v \in V(\Gamma^c)} \dim(\sigma_v) + 2 - |V(\Gamma^c) \setminus V(\Gamma)|.\] Then, 
     \begin{equation}
         [\mathcal{M}(c)] = \left[\left(\prod_{v \in V(\Gamma)} \mathcal{M}_{0, \mathrm{val}(v)}\right)  \times (\mathbb{C}^*)^{R_c}\right] \in K_0(\mathrm{Var}/\mathbb{C}).\label{form}
     \end{equation}

\end{lemma}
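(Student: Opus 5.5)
We describe the stratum \(\mathcal{M}(c)\) directly, as the moduli of nodal maps whose combinatorics is recorded by \(c\). By Corollary \ref{corollary} and the stratification by realisable combinatorial types, a point of \(\mathcal{M}(c)\) is equivalent to two pieces of data. The first is a nodal curve \(C\) with dual graph \(\Gamma\). The second is an ordinary map \(f\colon C\to X_\Sigma\) such that, for each \(v\in V(\Gamma^c)\), the component or node corresponding to \(v\) is sent into the torus orbit \(O(\sigma_v)\), with contact orders prescribed by the slopes \(m_{\upalpha}\). The bivalent vertices of \(\Gamma^c\setminus\Gamma\) are not components of \(C\). They record the points of a component \(C_v\), \(v\in V(\Gamma)\), or of a node, that map to deeper orbits. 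The plan is to split this data into the curve moduli and the map moduli.

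\emph{Curve data.} The source curve varies in the stratum of \(\overbar{\mathcal{M}}_{0,n}\) of type \(\Gamma\), which is \(\prod_{v\in V(\Gamma)}\mathcal{M}_{0,\mathrm{val}(v)}\). The points on each component \(C_v\) that map into the boundary are exactly the special points, namely markings and nodes (lemma: for genus \(0\) toric maps with prescribed contact, \(f|_{C_v}\colon \mathbb{P}^1\to \overline{O(\sigma_v)}\) meets the boundary of \(\overline{O(\sigma_v)}\) only at the special points, with orders \(m_{\upalpha}\)). Hence the curve datum imposes no further moduli.

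\emph{Map data on each component.} Fix \(v\in V(\Gamma)\) together with the special points. Maps \(\mathbb{P}^1\to \overline{O(\sigma_v)}\) with prescribed zeros and poles of the characters are a torsor under the torus \(T_{\sigma_v}\cong(\mathbb{C}^*)^{2-\dim\sigma_v}\). The reason is that a monomial map with given divisors of characters is determined up to scaling, exactly as in the identification \(\mathcal{M}(X_\Sigma,\upalpha)\cong\mathcal{M}_{0,n}\times(\mathbb{C}^*)^2\). The bivalent vertices \(w\) of type \(\dim\sigma_w\in\{0,1\}\) are treated similarly: the image point is pinned to \(O(\sigma_w)\), a torus of dimension \(2-\dim\sigma_w\).

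\emph{Gluing.} At each edge \(e\) (a node, or the piece of a subdivided edge) the two adjacent pieces must agree in the orbit \(O(\sigma_e)\)'s closure. Equivalently, their images must coincide after projecting to the torus \(T_{\sigma_e}\). This imposes \(2-\dim\sigma_e\) conditions, each cutting a torsor by a subtorus. Since \(\Gamma^c\) is a tree, these conditions are independent: processing edges from the leaves inward, each gluing is a surjective homomorphism of tori and the fibre is again a torus. The count of free \(\mathbb{C}^*\) factors is then
\[\sum_{v}(2-\dim\sigma_v)-\sum_{e}(2-\dim\sigma_e)-\#\{\text{bivalent vertices}\}.\]
Here we subtract one extra factor per bivalent vertex, because the position of the crossing point on the edge is already determined and only its scaling is redundant. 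Using \(|V(\Gamma^c)|-|E(\Gamma^c)|=1\) for a tree, this simplifies to \(R_c\).

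Two further points remain. Finite automorphisms are absent: we work with the coarse and unsaturated space, so Proposition \ref{3.7.4} gives that the underlying map determines the log map. The torus fibration over the curve stratum is Zariski-locally trivial, since torsors under split tori are Zariski-locally trivial by Hilbert 90. Together these give the product formula (\ref{form}), using the multiplicativity of classes in \(K_0(\mathrm{Var}/\mathbb{C})\).

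The main obstacle is the gluing step. First, surjectivity of each gluing map onto \(T_{\sigma_e}\) needs realisability of \(c\), and the fibre could fail to be connected: it is a torus only up to a finite group, from index issues of \(N_{\sigma_e}\). I would try to show that the fibre is nonetheless a torsor under a connected torus, or else that any disconnection is absorbed in the unsaturated coarse space. Second, the precise accounting of the \(-1\) per bivalent vertex must be checked against the formula for \(R_c\).
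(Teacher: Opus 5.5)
Your final exponent \(\sum_v(2-\dim\sigma_v)-\sum_e(2-\dim\sigma_e)-|V(\Gamma^c)\setminus V(\Gamma)|\) is the same count the paper makes. The geometric model you derive it from, however, is wrong exactly where you sensed trouble, so the \(-1\) per bivalent vertex is not established. You take the source curve to have dual graph \(\Gamma\), and you read each \(w\in V(\Gamma^c)\setminus V(\Gamma)\) as a special point of some \(C_v\), or a node, mapping to a deeper orbit. This cannot be right, for two reasons. First, the two edges at \(w\) carry cones \(\sigma_{e_1}\neq\sigma_{e_2}\), both having \(\sigma_w\) as a face, so a single point of \(C\) would have to lie in two different orbits. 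Second, \(O(\sigma_w)\) has the orbits \(O(\sigma_{e_i})\) in its closure, so it is \emph{shallower}, not deeper. Concretely, in the paper's example \(\varphi\) the component \(C_v\) is contracted to the torus-fixed point \(O(\sigma_1)\) while \(\sigma_{w_1}=\tau_2\), and no point of \(C_v\) lies in the one-dimensional orbit \(O(\tau_2)\). Your justification (``the position of the crossing point is already determined and only its scaling is redundant'') therefore has no content in your model; the \(-1\) is read off from \(R_c\) rather than derived.

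The missing idea is exactly how the paper gets the \(-1\). The actual source curve is prestable with dual graph \(\Gamma^c\). Each bivalent \(w\) is a genuine rational component \(C_w\) with two special points, mapped non-constantly into \(\overline{O(\sigma_w)}\); this holds because the edge crosses \(\sigma_w\), so its slope \(u\) is not in \(N_{\sigma_w}\otimes\R\). Maps \(C_w\to\overline{O(\sigma_w)}\) with the prescribed contact orders at the fixed special points form a \(T_{\sigma_w}\)-torsor. The group \(\mathrm{Aut}(C_w,0,\infty)\cong\C^*\) acts on this torsor through the cocharacter \(\bar u\in N/N_{\sigma_w}\): writing \(\chi^m\circ f_w=c_m z^{\langle m,u\rangle}\), one has \(c_m\mapsto\lambda^{\langle m,u\rangle}c_m\). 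This action does not change the values at the special points, so the coarse quotient is \((\C^*)^{2-\dim\sigma_w-1}\). That remains true when \(\bar u\) is non-primitive, since the resulting \(\mu_k\)-stabilisers only live on the stack, as in the paper's Euler--Satake example. The factor \(\prod_{v\in V(\Gamma)}\mathcal{M}_{0,\mathrm{val}(v)}\) survives because two-pointed rational components have no moduli. With this correction, your torsor, tree-induction and Hilbert 90 bookkeeping goes through and is more careful than the paper's parameter count. Your connectedness worry also disappears: \(\sigma_e^\perp\cap M\) is saturated in \(\sigma_v^\perp\cap M\), so each evaluation map \(T_{\sigma_v}\to T_{\sigma_e}\) is surjective with a torus as kernel.
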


\begin{proof}
    By Corollary \ref{corollary}, it is sufficient to work with the locus of ordinary stable maps. That is, we can ignore the data of the logarithmic structures and work with the moduli of constants and source curves. 
    
    For vertices \(v \in V(\Gamma)\), mapping \(v \mapsto \sigma_v\) is equivalent to mapping a genus 0 curve \(C_v\) into a toric variety \(X_{\sigma_v}\) of dimension \(2-\dim(\sigma_v)\) with prescribed tangency conditions to \(\partial X_{\sigma_v}\). Such maps are defined up to the dense torus of \(X_{\sigma_v}\), \(T_{X_{\sigma_v}} = (\mathbb{C}^*)^{2 - \dim(\sigma_v)}\). For stable vertices, the corresponding curve components have no non-trivial automorphisms, and so we cannot identify maps which differ by this torus action. However, for semi-stable components with 2 special points, corresponding to bivalent vertices, the subgroup of automorphisms fixing those points is isomorphic to \(\mathbb{C}^*\), giving that maps \(C_v \to X_{\sigma_v}\) are defined up to \((\mathbb{C}^*)^{2-\dim(\sigma_v) -1}\). 
    
    For \(v\) stable, each map \(C_v \to X_{\sigma_v}\) carries the moduli of the source curve, \(\mathcal{M}_{0, \text{val}(v)}\). Thus, for now, the class of \(\mathcal{M}(c)\) is \[[\mathcal{M}(c)] = \left[\left(\prod_{v \in V(\Gamma)} \mathcal{M}_{0,\text{val}(v)} \times (\mathbb{C}^*)^{2-\dim(\sigma_v)}\right) \times \prod_{v \in V(\Gamma^c) \setminus V(\Gamma)} (\mathbb{C}^*)^{2-\dim(\sigma_v) - 1}\right].\]

    Consider now \(E(\Gamma^c)\). Let \(e \in E(\Gamma^c)\) join vertices \(v\), \(w \in V(\Gamma^c)\) and consider \(\sigma_e\). If \(\dim(\sigma_e) = 0\), we have two curve components \(C_v\), \(C_w\) both mapping into the dense torus of \(X_\Sigma\) but meeting at a node \(q\) represented by \(e\). The moduli of both \(f_v \colon C_v \to T_{X_\Sigma}\), \(f_w \colon C_w \mapsto T_{X_\Sigma}\) is isomorphic to \((\mathbb{C}^*)^2\), however imposing \(f_v(q) = f_w(q)\) fixes one pair of the parameters, meaning that \(e\) causes an identification killing \((\mathbb{C}^*)^2\). 
    
    Similarly, if \(\dim(\sigma_e) = 1\), the maps from \(C_v\), \(C_w\) to the corresponding toric varieties are defined up to either 2 or 3 parameters. Again, \(e\) represents a node \(q\) which causes one of these parameters to be dependent on the other, meaning that \(e\) causes an identification killing \(\mathbb{C}^*\).
    
    Finally, if \(\dim(\sigma_e) = 2\), without loss of generality, \(C_w\) maps to a torus fixed point. Then, having \(C_v \mapsto X_{\sigma_v}\) and \(C_w \mapsto X_{\sigma_w}\) agree at \(q\) is automatic. As such, no moduli is lost. 

    Combining the above, we have the following expression for \([\mathcal{M}(c)]\): \[\left(\prod_{v \in V(\Gamma)} \mathcal{M}_{0,\text{val}(v)} \times (\mathbb{C}^*)^{2-\dim(\sigma_v)}\right) \times \prod_{v \in V(\Gamma^c) \setminus V(\Gamma)} (\mathbb{C}^*)^{2-\dim(\sigma_v) - 1} \times \prod_{e \in E(\Gamma^c)} (\mathbb{C}^*)^{\dim(\sigma_e)-2}\]

    The exponent of \(\mathbb{C}^*\) can be written as follows: 
    
    \begin{align*}
        &=\sum_{v \in V(\Gamma^c)} (2 - \dim(\sigma_v)) + \sum_{v \in V(\Gamma^c) \setminus V(\Gamma)} (-1) + \sum_{e \in E(\Gamma^c)} (\dim(\sigma_e)-2) \\  
        &= {2(|V(\Gamma^c)| - |E(\Gamma^c)|)}  + \sum_{e \in E(\Gamma^c)}{\dim(\sigma_e)} -  \sum_{v \in V(\Gamma^c)}{\dim(\sigma_v)} - \sum_{v \in V(\Gamma^c) \setminus V(\Gamma)} {1}
    \end{align*}\[\] Then, using that \(|V(\Gamma^c)| - |E(\Gamma^c)| = 1\), we see that this simplifies to \[(\mathbb{C}^*)^{\sum_{e \in E(\Gamma^c)} \dim(\sigma_e) - \sum_{v \in V(\Gamma^c)} \dim(\sigma_v)+2 - |V(\Gamma^c) \setminus V(\Gamma)|} = (\mathbb{C}^*)^{R_c}.\] Combining with \(\prod_{v \in V(\Gamma)}\mathcal{M}_{0, \text{val}(v)}\) gives \eqref{form}. 
\end{proof}

    \subsection{\(\Sigma\)-Equivalence}\label{equiv}

We now define a notion of equivalence of tangency data such that two sets of data \(\upalpha\), \(\upalpha'\) are equivalent if and only if they are in the same chamber of the \(\Sigma\)-slope decomposition. Throughout the rest of this chapter, for all \(I \subset [n]\), assume that \(\upalpha_I \neq \colVec{0}{0}\). We will discuss the alternative case in Remark \ref{zero}. 

We begin with a subdivision of \(\Sigma\). 

\begin{definition}[\(\Sigma^\dagger\)]
    Let \(\Sigma\) be a 2-dimensional fan and \(\Sigma(1) = \{\rho_1, \ldots, \rho_m\}\) where \(\rho_i\) has slope \(v_{\rho_i} \in N\). Define \(-\rho_i := \cone(-v_{\rho_i})\). Then, \(\Sigma^\dagger\) is defined to be the complete 2-dimensional fan such that \({\Sigma^{\dagger}}(1) = \{\pm \rho_1, \ldots, \pm \rho_m\}\).
\end{definition}

\begin{example}[Defining \(\Sigma^\dagger\)]
    Taking the fan of \(\mathbb{P}^2\), to form \(\Sigma^\dagger\), we include the rays \(\colVec{-1}{0}\), \(\colVec{0}{-1}\) and \(\colVec{1}{1}\):
    \begin{center}
        \begin{tikzpicture}[scale = 1.75]
            \draw[thick] (0,0) -- (1,0) -- (0,0) -- (0,1) -- (0,0) -- (-0.7, -0.7);
            \node at (2, 0) {\(\rightsquigarrow\)};
            \draw[thick] (5, 0) -- (3, 0) -- (4,0) -- (4, 1) -- (4, -1) -- (4,0) --  (5, 1) -- (3, -1);
            \node at (0, -1.2) {\(\Sigma\)};
            \node at (4, -1.2) {\(\Sigma^\dagger\)};
        \end{tikzpicture}
    \end{center}

    It could be the case, as for \(\mathbb{P}^1 \times \mathbb{P}^1\), that \(\Sigma = \Sigma^\dagger\). \begin{flushright}\emph{\textbf{End of example.}}\end{flushright}
\end{example}

\begin{definition}[Cyclic Pre-order, \cite{cycle}] \label{cyclicOrdering}
    A \emph{cyclic pre-order} on a set \(X\) is a collection of triples \([a,b,c] \in \mathscr{C} \subset X^3\) satisfying the following axioms:
    
    If \(a\), \(b\), \(c \in X\) are distinct, then 
    \begin{enumerate}
        \item If \([a, b, c] \in \mathscr{C}\), then \([b, c, a] \in \mathscr{C}\).
        \item Either \([a,b,c] \in \mathscr{C}\) or \([c,b,a] \in \mathscr{C}\). 
        \item If \([a, b, c] \in \mathscr{C}\), then \([c, b, a] \not \in \mathscr{C}\). 
        \item If \([a, b, d] \in \mathscr{C}\) and \([b, c, d] \in \mathscr{C}\), then \([a, b, c] \in \mathscr{C}\). 
    \end{enumerate} And finally,
    \begin{enumerate}[start = 5]
        \item If \([a,b,c] \in \mathscr{C}\) then \(a\), \(b\) and \(c\) are distinct. 
    \end{enumerate}
\end{definition}

Now, define \[\mathcal{I}^{\Sigma} = \left(\mathcal{P}([n]) \setminus \{\emptyset, [n]\}\right) \cup \Sigma^\dagger(1).\] Elements of \(\mathcal{I}^\Sigma\) define half-lines through the origin of slope \(\upalpha_I\) for \(I \subset [n]\) and of slope \(m_{\rho}\) for \(\rho \in \Sigma^\dagger(1)\). Given \(\upalpha\), for \(p \in \mathcal{I}^\Sigma\), write \(m_{\upalpha}(p) = \colVec{p_x}{p_y} \in N\) for the corresponding slope. There is a natural map \begin{align*} \nu_{\upalpha}\colon \mathcal{I}^\Sigma &\to S^1\\
    p &\mapsto \frac{1}{\sqrt{p_x^2+p_y^2}} \colVec{p_x}{p_y}
\end{align*} sending a vector to its normalisation on \(S^1\). As a subset of \(S^1\), we can define a cyclic ordering on the image \(\nu_{\upalpha}(\mathcal{I}^\Sigma)\) as per Definition \ref{cyclicOrdering}.

\begin{definition}[Cyclic Ordering on \(\mathcal{I}^\Sigma\)]\label{cyclicOrderDef}
    Given \(\upalpha \in N_0^n\) and \(\mathcal{I}^\Sigma\) for a complete fan \(\Sigma\), define a cyclic ordering on \(\mathcal{I}^\Sigma\) by using the cyclic ordering on the circle: for \(p\), \(q\) and \(r \in \mathcal{I}^\Sigma\), write \([p,q,r]\) if and only if \([\nu_{\upalpha}(p), \nu_{\upalpha}(q), \nu_{\upalpha}(r)]\) on \(S^1\). We denote this by writing \([p,q,r] \in \mathscr{C}_\upalpha(\mathcal{I}^\Sigma)\) where \(\mathscr{C}_\upalpha(\mathcal{I}^\Sigma)\) is the set of all cyclically ordered triples. 
\end{definition}

\begin{example}
    Let \(\Sigma\) be the fan of \(\mathbb{P}^2\) and \(\upalpha = \left(\colVec{1}{2}, \colVec{1}{3}, \colVec{-2}{-5}\right)\). Then, projecting each ray of slope \(\upalpha_I\) and \(v_\rho\) onto \(S^1\) gives 

    \begin{center}
        \begin{tikzpicture}[scale = 2]
            \draw[thick] (-1, 0) -- (1,0);
            \draw[thick] (0,-1) -- (0,1);
            \draw[thick] (-0.75, -0.75) -- (0.75,0.75);
            \draw[red, thick] (0.5, 1) -- (-0.5, -1);
            \draw[red, thick] (0.33333,1) -- (-0.33333, -1);
            \draw[red, thick] (0.4,-1) -- (-0.4, 1);
            \node at (1.1, 0) {\tiny{\(\rho_1\)}};
            \node at (-1.1, 0) {\tiny{-\(\rho_1\)}};
            \node at (0.85, 0.85) {\tiny{-\(\rho_3\)}};
            \node at (-0.85, -0.85) {\tiny{\(\rho_3\)}};
            \node at (0.65, 1) {\tiny{\(\{1\}\)}};
            \node at (-0.65, -1) {\tiny{\(\{23\}\)}};
            \node at (0.35, 1.1) {\tiny{\(\{2\}\)}};
            \node at (-0.35, -1.1) {\tiny{\(\{13\}\)}};
            \node at (0, 1.1) {\tiny{\(\rho_2\)}};
            \node at (0, -1.1) {\tiny{-\(\rho_2\)}};
            \node at (-0.45, 1.1) {\tiny{\(\{3\}\)}};
            \node at (0.45, -1.1) {\tiny{\(\{12\}\)}};

            \node at (2, 0) {\(\rightsquigarrow\)} ;
            \draw[thick] (0,0) circle (0.8cm);
            \draw[thick] (4,0) circle (0.8cm);
            \filldraw[red] (0+4,1*0.8) circle (0.75pt);
            \filldraw[red] (0+4,-1*0.8) circle (0.75pt);
            \filldraw[red] (1*0.8+4,0) circle (0.75pt);
            \filldraw[red] (-1*0.8+4,0) circle (0.75pt);
            \filldraw[red] (0.8*0.31623+4, 0.8*0.94868) circle (0.75pt);
            \filldraw[red] (-0.31623*0.8+4, -0.94868*0.8) circle (0.75pt);
            \filldraw[red] (0.44721*0.8+4, 0.89443*0.8) circle (0.75pt);
            \filldraw[red] (-0.44721*0.8+4, -0.89443*0.8) circle (0.75pt);
            \filldraw[red] (-0.37139*0.8+4, 0.92848*0.8) circle (0.75pt);
            \filldraw[red] (0.37139*0.8+4, -0.92848*0.8) circle (0.75pt);
            \filldraw[red] (0.70711*0.8 +4, 0.70711*0.8) circle (0.75pt);
            \filldraw[red] (-0.70711*0.8 +4, -0.70711*0.8) circle (0.75pt);

            \node at (1*0.8+4 + 0.15,0) {\tiny{\(\rho_1\)}}; 
            \node at (0+4,1*0.8+0.15) {\tiny{\(\rho_2\)}};
            \node at (0+4,-1*0.8-0.15) {-\tiny{\(\rho_2\)}};
            \node at (-1*0.8+4-0.2,0) {\tiny{-\(\rho_1\)}};
            \node at (0.8*0.31623+4+0.1, 0.8*0.94868+0.15) {\tiny{\(\{2\}\)}};
            \node at (-0.31623*0.8+4, -0.94868*0.8-0.15) {\tiny{\(\{13\}\)}};
            \node at (0.44721*0.8+4+0.2, 0.89443*0.8+0.05) {\tiny{\(\{1\}\)}};
            \node at (-0.44721*0.8+4-0.15, -0.89443*0.8-0.1) {\tiny{\(\{23\}\)}};
            \node at (-0.37139*0.8+4-0.1, 0.92848*0.8+0.15) {\tiny{\(\{3\}\)}};
            \node at (0.37139*0.8+4+0.15, -0.92848*0.8-0.15) {\tiny{\(\{12\}\)}};
            \node at (0.70711*0.8 +4+0.2, 0.70711*0.8) {\tiny{-\(\rho_3\)}};
            \node at (-0.70711*0.8 +4-0.15, -0.70711*0.8-0.1) {\tiny{\(\rho_3\)}};
        \end{tikzpicture}
    \end{center} We have, for example, \([-\rho_3, \{1\}, \{2\}]\) and \([-\rho_1, \{12\}, \rho_1] \in \mathscr{C}_{\upalpha}(\mathcal{I}^\Sigma)\).
    \begin{flushright} \textbf{\emph{End of example.}} \end{flushright}
\end{example}

We use the cyclic ordering on \(\mathcal{I}^\Sigma\) to define an equivalence of tangency data. 

\begin{definition}[\(\Sigma\)-Equivalence]\label{sigmaEquiv}
    Let \(\upalpha\), \(\upalpha' \in N_0^n\). We say \(\upalpha\) and \(\upalpha'\) are \emph{\(\Sigma\)-equivalent}, written \(\upalpha \sim_\Sigma \upalpha'\), if the cyclic orderings on \(\mathcal{I}^\Sigma\) induced by \(\nu_{\upalpha}\) and \(\nu_{\upalpha'}\) are the same. 
\end{definition} \noindent We will define the walls of the \(\Sigma\)-slope decomposition to define regions where the cyclic orderings are constant in Section \ref{walls}. The open regions between walls will be referred to as \emph{chambers}.

\begin{remark}\label{nu}
    Definition \ref{cyclicOrderDef} captures when, for \(p\), \(q \in \mathcal{I}^\Sigma\), \(\nu_\upalpha(p) = \nu_\upalpha(q)\). In this instance, we have that for all \(r\), \(s \in \mathcal{I}^\Sigma\), \[[p, r,s] \in \mathscr{C}_{\upalpha}(\mathcal{I}^\Sigma) \iff [q, r, s] \in \mathscr{C}_{\upalpha}(\mathcal{I}^\Sigma)\] and \[\nexists \,z \in \mathcal{I}^\Sigma : [p, q, z] \in \mathscr{C}_{\upalpha}(\mathcal{I}^\Sigma).\] Therefore, for \(\upalpha\), \(\upalpha'\) to be \(\Sigma\)-equivalent, for all \(p\), \(q \in \mathcal{I}^\Sigma\), we must have \[\nu_\upalpha(p) = \nu_{\upalpha}(q) \iff \nu_{\upalpha'}(p) = \nu_{\upalpha'}(q).\] \begin{flushright} \textbf{\emph{End of remark.}} \end{flushright}
\end{remark}

\begin{definition}[Conical Containments, Combinatorially Equivalent]
    Let \(\mathcal{A} = \{a_1, \ldots, a_m\}\) be an ordered set of vectors in \(N_\R\). Then, the \emph{conical containments} of \(\mathcal{A}\) are the containments of the form \[a_i \in \cone(a_j), \quad a_i \in \cone(a_k, a_l)\] that \(\mathcal{A}\) satisfies. 

    Two ordered sets of vectors \(\mathcal{A} = \{a_1, \ldots, a_m\}\), \(\mathcal{B} = \{b_1, \ldots, b_m\}\) are called \emph{combinatorially equivalent} if they have the same cone containments, i.e. \[a_i \in \cone(a_j) \iff b_i \in \cone(b_j) \] and \[a_i \in \cone(a_k, a_l) \iff b_i \in \cone(b_k, b_l).\]
\end{definition}

\begin{lemma}
    Let \(\upalpha\), \(\upalpha' \in N_0^n\). If \(\upalpha \sim_\Sigma \upalpha'\), then \(\nu_{\upalpha}(\mathcal{I}^\Sigma)\), \(\nu_{\upalpha'}(\mathcal{I}^\Sigma)\) are combinatorially equivalent. \label{coneConditions}
\end{lemma}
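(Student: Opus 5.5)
The plan is to express each conical containment among the vectors \(\nu_{\upalpha}(p)\), \(p \in \mathcal{I}^\Sigma\), purely in terms of the cyclic pre-order \(\mathscr{C}_{\upalpha}(\mathcal{I}^\Sigma)\) together with the coincidence relation of Remark \ref{nu}. Once this is done, \(\upalpha \sim_\Sigma \upalpha'\) immediately gives the same containments for \(\upalpha'\).

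The key structural input is that \(\mathcal{I}^\Sigma\) is closed under negation up to normalisation. For subsets, \(\upalpha_{I^{\mathrm{c}}} = -\upalpha_I\) by the balancing condition. For rays, \(-\rho \in \Sigma^\dagger(1)\) by construction. So for every \(p\) there is an element \(\bar p \in \mathcal{I}^\Sigma\) with \(\nu_\upalpha(\bar p) = -\nu_\upalpha(p)\). The assignment \(p \mapsto \bar p\) (namely \(I \mapsto I^{\mathrm{c}}\), \(\rho \mapsto -\rho\)) is independent of \(\upalpha\). Here the standing assumption \(\upalpha_I \neq 0\) is used, so that \(\nu_\upalpha\) is well defined.

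The containments then translate as follows.

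\emph{One-dimensional containments.} \(a_i \in \cone(a_j)\) holds if and only if \(\nu(a_i) = \nu(a_j)\), since all \(a_i\) are nonzero. By Remark \ref{nu} this equality is determined by the cyclic order: it holds exactly when \(a_i, a_j\) behave identically in every triple and no \(z\) satisfies \([a_i,a_j,z]\).

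\emph{Two-dimensional containments.} For \(a_i \in \cone(a_k,a_l)\), I would split into cases.
\begin{enumerate}
\item If \(\nu(a_k)=\nu(a_l)\), the cone is a ray, and we reduce to the previous case.
\item If \(\nu(a_l) = -\nu(a_k)\), which is detected by \(\nu(a_l)=\nu(\bar a_k)\), the cone is a closed half-plane. Membership is not then determined by triples alone, so some care is needed here (see below).
\item Otherwise \(\cone(a_k,a_l)\) is a strictly convex sector, namely the shorter of the two arcs between \(\nu(a_k)\) and \(\nu(a_l)\). It is the arc from \(a_k\) to \(a_l\) not containing \(\bar a_k\) and \(\bar a_l\). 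Then \(a_i\) lies in the cone if and only if \(\nu(a_i)\) equals \(\nu(a_k)\) or \(\nu(a_l)\), or \(a_i\) lies strictly between them on the arc avoiding \(\bar a_k\). This is expressed as: \([a_k,a_i,a_l]\) and \([a_k,\bar a_k,a_l]\) both hold, or \([a_l,a_i,a_k]\) and \([a_l,\bar a_k,a_k]\) both hold. Each clause is a statement about \(\mathscr{C}_\upalpha\) and coincidences, hence is preserved under \(\sim_\Sigma\).
\end{enumerate}

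The main obstacle is the half-plane case. There a cone spanned by two opposite vectors is a half-plane, but ``\(\cone(a_k,a_l)\)'' spanned by two antipodal vectors is just the line through them, as a set of nonnegative combinations. Indeed \(\lambda a_k + \mu a_l\) with \(a_l = -ca_k\) is a multiple of \(a_k\). So \(a_i \in \cone(a_k,a_l)\) iff \(\nu(a_i) \in \{\nu(a_k), \nu(a_l)\}\), which is again a coincidence condition preserved by Remark \ref{nu}. I would state this explicitly to avoid the half-plane misreading. I would also check the degenerate cyclic-order axioms: triples require distinct points, so coincident images must be handled via Remark \ref{nu} before applying the arc criterion. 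Combining the cases shows that the conical containments coincide for \(\upalpha\) and \(\upalpha'\).
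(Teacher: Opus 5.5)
Your strategy is sound, and it rests on the same observation as the paper's proof. The involution \(I \mapsto I^{\mathrm{c}}\), \(\rho \mapsto -\rho\) on \(\mathcal{I}^\Sigma\) does not depend on \(\upalpha\), and it lets the cyclic order detect which of the two arcs between two generators is the convex one. The paper uses this in a proof by contradiction, treating only three distinct images. It assumes \(m_\upalpha(p)\in\cone(m_\upalpha(q),m_\upalpha(r))\) but not the analogous containment for \(\upalpha'\). It then splits according to whether the three \(\upalpha'\)-vectors lie in a half-plane, and uses \(-q\) to produce a triple contradicting \(\mathscr{C}_\upalpha(\mathcal{I}^\Sigma)\). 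You instead write an explicit dictionary between containments and statements about triples and coincidences. This makes invariance under \(\sim_\Sigma\) immediate, and it also covers the degenerate cases (coincident or antipodal generators) that the paper leaves implicit.

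However, the explicit criterion in your third case is inverted. Under either orientation convention, \([a_k,a_i,a_l]\) together with \([a_k,\bar a_k,a_l]\) puts \(a_i\) and \(\bar a_k\) on the same arc from \(a_k\) to \(a_l\). Since \(\nu(a_l)\neq\pm\nu(a_k)\), the arc through \(\bar a_k\) has length greater than \(\pi\). So this clause, and symmetrically your second one, describes the reflex sector rather than the cone. Your verbal description (strictly between \(a_k\) and \(a_l\) on the arc avoiding \(\bar a_k\)) requires \(a_i\) and \(\bar a_k\) to lie on opposite arcs:
\[
\bigl([a_k,a_i,a_l] \text{ and } [a_l,\bar a_k,a_k]\bigr) \quad\text{or}\quad \bigl([a_l,a_i,a_k] \text{ and } [a_k,\bar a_k,a_l]\bigr).
\]
This is still a Boolean combination of triples in \(\mathscr{C}_\upalpha(\mathcal{I}^\Sigma)\) and coincidences, so once corrected the rest of your argument goes through unchanged. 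You should also drop the ``closed half-plane'' description at the start of your second case. As you then note, \(\cone(a_k,a_l)\) with antipodal generators is the line \(\mathbb{R}a_k\), so membership there is just the coincidence condition \(\nu(a_i)\in\{\nu(a_k),\nu(a_l)\}\).
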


\begin{proof}
    We begin with one-dimensional conical containments. For \(\upalpha\), let \(p\), \(q \in \mathcal{I}^\Sigma\) such that \(m_{\upalpha}(p) \in \cone(m_{\upalpha}(q))\). We have \[m_{\upalpha}(p) \in \cone(m_{\upalpha}(q)) \iff \nu_{\upalpha}(p) = \nu_{\upalpha}(q)\] This holds for \(\upalpha\) if and only if it holds for \(\upalpha'\) by Remark \ref{nu}. 

    Then, for maximal cones, consider \(p\), \(q\), \(r \in \mathcal{I}^\Sigma\), all with distinct images under \(\nu_{\upalpha}\) (and so, under \(\nu_{\upalpha'}\)). Suppose that \(m_{\upalpha}(p) \in \cone(m_{\upalpha}(q), m_{\upalpha}(r))\), i.e. we have (without loss of generality) \([q, p,r] \in \mathscr{C}_{\upalpha}(\mathcal{I}^\Sigma)\). Thus, \(m_{\upalpha}(p)\), \(m_{\upalpha}(q)\) and \(m_{\upalpha}(r)\) are contained within a half space. In particular, they are contained within the half-space determined by \(m_{\upalpha}(q)\) and \(-m_{\upalpha}(q)\). 
    The picture is as follows: 

    \begin{center}
        \begin{tikzpicture}[scale = 1.75]
            \filldraw (0,0) circle (0.75pt);
            \draw[thick, <->] (-1, 0) -- (1, 0);
            \draw[thick, <->] (-0.7, -0.7) -- (0.7, 0.7); 
            \draw[thick, <->] (-0.31623, 0.94868) -- (0.31623, -0.94868); 
            \node at (1.35, 0) {\(m_{\upalpha}(q)\)};
            \node at (-1.4, 0.) {\(-m_{\upalpha}(q)\)};
            \node at (0.85, 0.85) {\(m_{\upalpha}(p)\)};
            \node at (-0.88, -0.88) {\(-m_{\upalpha}(p)\)};
            \node at (-0.31623-0.05, 0.94868+0.15) {\(m_{\upalpha}(r)\)};
            \node at (0.31623+0.05, -0.94868-0.15) {\(-m_{\upalpha}(r)\)};
            \node at (0, -1.5) {\(\upalpha\)};
        \end{tikzpicture}
    \end{center}
    
    Now, consider \(m_{\upalpha'}(p)\), \(m_{\upalpha'}(q)\) and \(m_{\upalpha'}(r)\). Suppose that \(m_{\upalpha'}(p) \not \in \cone(m_{\upalpha'}(q), m_{\upalpha'}(r))\). Then, either they are still contained within a half-space, but have swapped orders, or they are no longer contained within a half space. Assume firstly that \(m_{\upalpha'}(p)\), \(m_{\upalpha'}(q)\) and \(m_{\upalpha'}(r)\) are contained within some half space. There are two possibilities: 
    
    \begin{center}
    \begin{tikzpicture}[scale = 1.75]
        \filldraw (0,0) circle (0.75pt);
        \draw[thick, <->] (-1, 0) -- (1, 0);
        \draw[thick, <->] (-0.7, -0.7) -- (0.7, 0.7); 
        \draw[thick, <->] (-0.31623, 0.94868) -- (0.31623, -0.94868); 
        \node at (1.35, 0) {\(m_{\upalpha'}(r)\)};
        \node at (-1.45, 0.) {\(-m_{\upalpha'}(r)\)};
        \node at (0.85, 0.85) {\(m_{\upalpha'}(q)\)};
        \node at (-0.88, -0.88) {\(-m_{\upalpha'}(q)\)};
        \node at (-0.31623-0.05, 0.94868+0.15) {\(m_{\upalpha'}(p)\)};
        \node at (0.31623+0.05, -0.94868-0.15) {\(-m_{\upalpha'}(p)\)};
        \node at (0, -1.5) {\([r,q,p]\)};

        \filldraw (0-4,0) circle (0.75pt);
        \draw[thick, <->] (-1-4, 0) -- (1-4, 0);
        \draw[thick, <->] (-0.7-4, -0.7) -- (0.7-4, 0.7); 
        \draw[thick, <->] (-0.31623-4, 0.94868) -- (0.31623-4, -0.94868); 
        \node at (1.35-4, 0) {\(m_{\upalpha'}(q)\)};
        \node at (-1.4-4, 0.) {\(m_{\upalpha'}(q)\)};
        \node at (0.85-4, 0.85) {\(m_{\upalpha'}(r)\)};
        \node at (-0.88-4, -0.88) {\(-m_{\upalpha'}(r)\)};
        \node at (-0.31623-0.05-4, 0.94868+0.15) {\(m_{\upalpha'}(p)\)};
        \node at (0.31623+0.05-4, -0.94868-0.15) {\(-m_{\upalpha'}(p)\)};
        \node at (0-4, -1.5) {\([q,r,p]\)};
    \end{tikzpicture}
\end{center}

    Thus, we must have either \([q,r,p] \in \mathscr{C}_{\upalpha'}(\mathcal{I}^\Sigma)\) or \([r,q,p] \in \mathscr{C}_{\upalpha'}(\mathcal{I}^\Sigma)\). The presence of \([q,r,p]\) is an immediate contradiction to \(\upalpha \sim_\Sigma \upalpha'\) by Definition \ref{cyclicOrdering} since \([q,p,r] \in \mathscr{C}_{\upalpha}(\mathcal{I}^\Sigma)\). Denote by \(-q \in \mathcal{I}^\Sigma\) the element corresponding either to, if \(p \subset [n]\), \(p^{\mathrm{c}})\), or if \(q \in \Sigma(1)\), the ray of opposite slope. Then, \(m_{\upalpha}(-q) = -m_{\upalpha}(q)\). Now, if \([r,q,p] \in \mathscr{C}_{\upalpha'}(\mathcal{I}^\Sigma)\) we then have \([r,p,-q] \in \mathscr{C}_{\upalpha'}(\mathcal{I}^\Sigma)\), yet \([p,r,-q] \in \mathscr{C}_{\upalpha}(\mathcal{I}^\Sigma)\), giving a contradiction. 
    
    Finally, assume that \(m_{\upalpha'}(p) \not \in \cone(m_{\upalpha'}(q), m_{\upalpha'}(r))\) because\(m_{\upalpha'}(p)\), \(m_{\upalpha'}(q)\) and \(m_{\upalpha'}(r)\) are not all contained in a half-space. Consider the half-space \(m_{\upalpha'}(q)- (-m_{\upalpha'}(q))\): 
    
    \begin{center}
        \begin{tikzpicture}[scale = 1.75]
            \draw[thick, <->] (0, 1) -- (0,-1);
            \draw[thick, <->] (-0.44721, -0.89443) -- (0.44721, 0.89443);
            \draw[thick, <->] (0.89443, -0.44721) -- (-0.89443, 0.44721);
            \filldraw (0,0) circle (0.75pt);
            \node at (0, 1.15) {\(m_{\upalpha'}(q)\)};
            \node at (0, -1.15) {\(-m_{\upalpha'}(q)\)};
            \node at (-0.44721-0.075-0.2, -0.89443-0.15) {\(m_{\upalpha'}(r)\)};
            \node at (0.44721+0.05+0.2, 0.89443+0.1) {\(-m_{\upalpha'}(r)\)};
            \node at (-0.89443-0.43, 0.44721+0.1) {\(-m_{\upalpha'}(p)\)};
            \node at (0.89443+0.4, -0.44731-0.075) {\(m_{\upalpha'}(p)\)};

            \draw[thick, <->] (0+4, 1) -- (0+4,-1);
            \draw[thick, <->] (-0.44721+4, -0.89443) -- (0.44721+4, 0.89443);
            \draw[thick, <->] (0.89443+4, -0.44721) -- (-0.89443+4, 0.44721);
            \filldraw (0+4,0) circle (0.75pt);
            \node at (0+4, 1.15) {\(m_{\upalpha'}(q)\)};
            \node at (0+4, -1.15) {\(-m_{\upalpha'}(q)\)};
            \node at (-0.44721-0.05+4-0.2, -0.89443-0.15) {\(m_{\upalpha'}(p)\)};
            \node at (0.44721+0.05+4+0.2, 0.89443+0.1) {\(-m_{\upalpha'}(p)\)};
            \node at (-0.89443-0.43+4, 0.44721+0.1) {\(-m_{\upalpha'}(r)\)};
            \node at (0.89443+0.4+4, -0.44731-0.075) {\(m_{\upalpha'}(r)\)};
        \end{tikzpicture}
    \end{center} We either have \([q, r, -q] \in \mathscr{C}_{\upalpha'}(\mathcal{I}^\Sigma)\) or \([q, p, -q] \in \mathscr{C}_{\upalpha'}(\mathcal{I}^\Sigma)\). If \([q, r, -q] \in \mathscr{C}_{\upalpha'}(\mathcal{I}^\Sigma)\), we have \([q, r, p] \in \mathscr{C}_{\upalpha'}(\mathcal{I}^\Sigma)\), contradicting \([q, p,r] \in \mathscr{C}_{\upalpha}(\mathcal{I}^\Sigma)\). Else, if \([q, p, -q] \in \mathscr{C}_{\upalpha'}(\mathcal{I}^\Sigma)\), we have \([-q, r, q] \in \mathscr{C}_{\upalpha'}(\mathcal{I}^\Sigma)\), a contradiction to \([-q, q, r] \in \mathscr{C}_{\upalpha}(\mathcal{I}^\Sigma)\). Therefore, \(m_{\upalpha'}(p) \in \cone(m_{\upalpha'}(q), m_{\upalpha'}(r))\) and \(\nu_{\upalpha}(\mathcal{I}^\Sigma)\), \(\nu_{\upalpha'}(\mathcal{I}^\Sigma)\) are combinatorially equivalent.
\end{proof}

We are now in a position to state the main theorem:

\begin{theorem}[Theorem \ref{thm: intro}] \label{thm: body} As a function of \(\upalpha\), the class \(\left[\,\overbar{\mathcal{M}}(X_\Sigma, \upalpha)\right] \in K_0(\mathrm{Var}/\mathbb{C})\) is constant on the chambers of the \(\Sigma\)-slope decomposition of \(N^n\).     
\end{theorem}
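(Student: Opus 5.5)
The plan is to fix \(\upalpha \sim_\Sigma \upalpha'\) and produce a bijection between realisable combinatorial types \(c \in \mathcal{C}(\Gamma, \Sigma, \upalpha)\) and \(c' \in \mathcal{C}(\Gamma, \Sigma, \upalpha')\), for every \(\Gamma \in \Gamma_{0,n}\). The bijection should preserve the semi-stable graph \(\Gamma^c\), the set of new bivalent vertices, and all cone dimensions \(\dim \sigma_v\), \(\dim \sigma_e\). Granting this, formula \eqref{form} gives \([\mathcal{M}(c)] = [\mathcal{M}(c')]\), since \(R_c\) and the factors \(\mathcal{M}_{0,\mathrm{val}(v)}\) depend only on these data. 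By the stratification lemma, Corollary \ref{corollary} and Proposition \ref{splitter}, summing over strata then gives \([\,\overbar{\mathcal{M}}(X_\Sigma, \upalpha)] = [\,\overbar{\mathcal{M}}(X_\Sigma, \upalpha')]\). It remains to identify the chambers of the \(\Sigma\)-slope decomposition with \(\Sigma\)-equivalence classes. Inside a connected region avoiding all \(W_{IJ}\) and \(W_{\rho,I}\), no two elements of \(\mathcal{I}^\Sigma\) become parallel. By continuity the cyclic order on \(\nu_\upalpha(\mathcal{I}^\Sigma)\) is therefore locally constant, hence constant on the chamber.

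The candidate for \(c'\) is simply the same labelled data: the same \(\Gamma^c\), the same cone assignment, and the new slope assignment \(m_{\upalpha'}\) from Lemma \ref{slopeLemma}. First I would check the conditions of Definition \ref{combinTypeDef}. The condition \(m_{\upalpha'}(\vv{e}) \in N_{\sigma_e}\) is a cone containment among slopes \(\upalpha'_I\) and rays of \(\Sigma\). By Lemma \ref{coneConditions} such containments transfer from \(\upalpha\) to \(\upalpha'\), and this is where including \(-\rho\) in \(\Sigma^\dagger\) is needed. The two stability conditions transfer for the same reason.

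The real content is \emph{realisability} of \(c'\). My approach is to realise \(c\) by a tropical map \(f\) and deform it to a map \(f'\) with slopes \(m_{\upalpha'}\) and the same combinatorial type. I would argue by induction along the tree, rooted at a vertex whose cone is minimal (ideally the origin or a ray if one exists). I place \(f'(v)\) in \(\sigma_v^\circ\) and then extend edge by edge: the edge from an already-placed vertex \(v\) has direction \(m_{\upalpha'}(\vv{e})\), and I must choose its length so that the far endpoint lands in the prescribed relative interior \(\sigma_w^\circ\).

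The feasible set of lengths is an interval, determined by where the ray from \(f'(v)\) in direction \(m_{\upalpha'}(\vv e)\) crosses the rays of \(\Sigma\). Whether the ray crosses a given ray \(\rho\), and in which order it meets the walls, is governed by the cyclic position of \(\upalpha'_I\) relative to \(\pm v_\rho\). This is exactly what \(\Sigma\)-equivalence fixes. One subtlety is that vertices sit at interior points, not at the origin, so the relevant comparison is between the direction and the cone walls through the current vertex's cone. To handle this cleanly I would pass, as the paper suggests, to a refinement \(\tilde\Sigma_\upalpha\) by lines of slopes \(\upalpha_I\) and \(v_\rho\), and lift \(c\) to a finer type \(\tilde c\). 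I would then show the fan \(\tilde\Sigma_{\upalpha'}\) is combinatorially isomorphic to \(\tilde\Sigma_\upalpha\) (same cyclic order of rays), build \(\tilde c'\) cone by cone, and push forward to \(\Sigma\).

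I expect the main obstacle to be the \emph{simultaneous} constraints on the leaves. Legs are unbounded, so each leg of \(f'\) must lie entirely in \(\sigma_\ell\) and must cross exactly the prescribed rays. A vertex supporting several edges also imposes several constraints on a single position \(f'(v)\). My first attempt would be to work with the ordering of vertices along each ray and show that the system of linear inequalities defining realisations of \(c\) in the parameters \((f(v), l_e)\) is a polyhedron. Its nonemptiness should be decided by sign conditions on \(2\times2\) determinants among the \(\upalpha_I\) and \(v_\rho\), and these signs are constant on a chamber.

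Injectivity of \(c \mapsto c'\) is clear, since the data is identical. Surjectivity follows by symmetry, swapping \(\upalpha\) and \(\upalpha'\).
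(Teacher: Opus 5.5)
Your outer framework matches the paper's. You use the same candidate \(c'\), with the same semi-stable graph and cone assignments and slopes \(m_{\upalpha'}\). You also use formula \eqref{form}, Proposition \ref{splitter}, symmetry for surjectivity, and the passage through \(\tilde{\Sigma}_{\upalpha}\) and back to \(\Sigma\). The gap is the one step with real content, namely that \(c'\) is realisable; you leave this as the ``main obstacle'' and do not resolve it.

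Your first route rests on the claim that the cyclic position of \(\upalpha'_I\) relative to \(\pm v_\rho\) determines which rays the segment from \(f'(v)\) meets, and in what order. In \(\Sigma\) this is false. Take \(\Sigma=\Sigma_{\mathbb{P}^2}\), \(f'(v)=(a,b)\in\sigma_1^\circ\) and slope \(\colVec{-2}{-1}\). The segment leaves \(\sigma_1\) through \(\tau_2\), through \(o\), or through \(\tau_1\) according as \(b>a/2\), \(b=a/2\) or \(b<a/2\). So the exit depends on which side of the line of slope \(\upalpha_I\) the vertex lies, not only on the direction. Since earlier choices have already fixed \(f'(v)\), nothing guarantees it lies on the correct side; this is exactly the simultaneous-constraint problem you flag. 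Your fallback, that signs of \(2\times 2\) determinants decide nonemptiness of the realisation polyhedron, just restates what has to be proved. Nothing in your sketch explains why eliminating the variables \((f(v), l_e)\) should produce only such minors.

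The missing idea is that in \(\tilde{\Sigma}_{\upalpha}\) realisability becomes \emph{local}, so the refinement is doing real work rather than being a convenience.
\begin{itemize}
\item Every \(\pm\upalpha_I\) spans a ray of \(\tilde{\Sigma}_{\upalpha}\). Hence no edge slope lies in the interior of a two-dimensional cone \(\sigma\) or of \(-\sigma\) (Lemma \ref{slope}). Also, a vertex in the interior of a two-dimensional cone of \(\tilde{\Sigma}_{\upalpha}\) lies on a determined side of every line of slope \(\upalpha_I\).
\item Consider an edge of a realisable lift \(\tilde c\) running from \(\tilde\sigma_v\) to \(\tilde\sigma_w\), where these are equal or one is a face of the other. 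The existence of a length putting the far endpoint in \(\tilde\sigma_w^\circ\) is a single cone containment among the edge slope and the generators of \(\tilde\sigma_v,\tilde\sigma_w\). This containment works for \emph{every} starting point in \(\tilde\sigma_v^\circ\) (Lemma \ref{coneContainmentLemma}).
\item Likewise, a leg based in a cone adjacent to the ray of its slope never meets a ray (Lemma \ref{crossLeg}).
\end{itemize}
Because the source graph is a tree, you can then place vertices greedily from an arbitrary root. Each vertex is positioned by its parent edge, and every child condition is satisfiable from wherever that vertex lands, so no coupled system arises at all. The containments transfer to \(\upalpha'\) by Lemmas \ref{coneConditions} and \ref{analogousCone}, and pushing forward along \(\tilde{\Sigma}_{\upalpha'}\to\Sigma\) recovers the data of \(c\). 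Without this locality statement, or an equivalent, your proposal does not show that \(c\mapsto c'\) sends realisable types to realisable types.
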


    \subsection{Proof part 1: the slope-sensitive fan}\label{fan}

The following construction uses ideas of \cite[Section 2.3]{BNR2022}. We define a further subdivision of \(\Sigma\), denoted \(\tilde{\Sigma}_{\upalpha}\), depending on \(\upalpha\). This refinement is needed for the proof of Theorem \ref{thm: body}, however, it is not required for the statement. 

The subdivision \(\tilde{\Sigma}_{\upalpha}\) is referred to as the \emph{slope sensitive fan} given \(\upalpha\). To define it, we begin with \(\Sigma^\dagger\). 

\begin{definition}[Slope-Sensitive Fan]\label{ssFan}
    Consider a complete 2-dimensional fan \(\Sigma\) in  \(N_\R\) and \(\upalpha \in N^n_0\). For each \(I \subset [n]\), define \(\rho_I := \cone(\upalpha_I)\). We define the \emph{slope-sensitive fan}, \(\tilde{\Sigma}_{\upalpha}\), to be the complete 2-dimensional fan such that \[\tilde{\Sigma}_{\upalpha}(1) = {\Sigma^{\dagger}}(1) \cup \{\rho_I : I \subset [n]\}.\] 
\end{definition}

By the balancing condition, for every ray \(\rho_I\) with slope \(\upalpha_I\), we have \(\rho_{I} = - \rho_{I^\mathrm{c}}\) and so for all rays of the slope-sensitive fan, there will also be a ray of the opposite slope. Note that \(\tilde{\Sigma}_{\upalpha}\) may be not be smooth, but this is unimportant for the proof of Theorem \ref{thm: body}.  

\begin{example}[Slope Sensitive Fan]
    \label{slopeSen} Let \(\upalpha = \left(\colVec{2}{0}, \colVec{0}{1}\right), \colVec{-2}{-1})\) and \(\Sigma = \Sigma_{\mathbb{P}^2}\). We first take \(\Sigma_{\mathbb{P}^2}^\dagger\). Then, to arrive at the slope sensitive fan, we add rays of slope \[\quad \upalpha_1 = \colVec{2}{0}, \quad \upalpha_2 = \colVec{0}{1}, \quad \upalpha_3 = \colVec{-2}{-1}\] along with \[\upalpha_1 + \upalpha_2 = -\upalpha_3 = \colVec{2}{1}, \quad \upalpha_2 + \upalpha_3 = - \upalpha_1 = \colVec{-2}{0}, \quad \upalpha_1 + \upalpha_3 = - \upalpha_2 = \colVec{0}{-1}.\] Some rays are already present in \(\Sigma^\dagger_{\mathbb{P}^2}\), for example \(\colVec{2}{0}\) since it is parallel to \(\colVec{1}{0}\). The slope sensitive fan then looks as follows, with new rays shown in orange:

    \begin{center}
        \begin{tikzpicture}[scale = 1.75] 
            \draw[thick] (0,1) -- (0,0) -- (1,0) -- (0,0) -- (-1,-1);
            \draw[orange, thick] (-1, 0) -- (0,0) -- (0, -1) -- (0,0) -- (1,1);
            \draw[orange, thick] (1, 0.5) -- (-1, -0.5);
        \end{tikzpicture}
    \end{center} \begin{flushright}\emph{\textbf{End of example.}}\end{flushright}   
\end{example}

\begin{definition}[Analogous Cone]
    Let \(\upalpha\), \(\upalpha' \in N^n_0\) and fan \(\Sigma\) with \(\Sigma^\dagger(1) = \{\rho_1, \ldots, \rho_k\}\). Let \(\tilde{\Sigma}_{\upalpha}\), \(\tilde{\Sigma}_{\upalpha'}\) be the corresponding slope-sensitive fans and let \(\sigma \in \tilde{\Sigma}_{\upalpha}\) be a cone. The \emph{analogous cone} to \(\sigma\), denoted \(\sigma'\), is defined in the following way: \[\sigma = \cone(\upalpha_{I_1}, \ldots, \upalpha_{I_m}, \rho_{i_1}, \ldots, \rho_{i_\ell}) \implies \sigma' = \cone(\upalpha'_{I_1}, \ldots, \upalpha'_{Im}, \rho_{i_1}, \ldots, \rho_{i_\ell})\] where \(I_1, \ldots, I_m \subset [n]\). 
\end{definition}

We now give a lemma.

\begin{lemma}
    Let \(\upalpha \sim_\Sigma \upalpha'\). Then, for each cone \(\sigma \in \tilde{\Sigma}_{\upalpha}\), the analogous cone \(\sigma'\) is a cone of \(\tilde{\Sigma}_{\upalpha'}\). \label{analogousCone}
\end{lemma}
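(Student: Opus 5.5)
The plan is to describe the cones of \(\tilde{\Sigma}_{\upalpha}\) purely in terms of the cyclic ordering \(\mathscr{C}_{\upalpha}(\mathcal{I}^\Sigma)\), and then transport this description to \(\upalpha'\).

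\textbf{Step 1: cones of \(\tilde{\Sigma}_{\upalpha}\) via \(\nu_{\upalpha}\).} Since \(\tilde{\Sigma}_{\upalpha}\) is a complete two-dimensional fan, its cones are of three kinds: the origin, the rays \(\tilde{\Sigma}_{\upalpha}(1) = \Sigma^\dagger(1)\cup\{\rho_I\}\), and the maximal cones. The rays are exactly \(\cone(m_{\upalpha}(p))\) for \(p \in \mathcal{I}^\Sigma\), where several labels may give the same ray. Each maximal cone is \(\cone(m_{\upalpha}(p), m_{\upalpha}(q))\) with \(\nu_{\upalpha}(p)\) and \(\nu_{\upalpha}(q)\) cyclically adjacent among the distinct points of \(\nu_{\upalpha}(\mathcal{I}^\Sigma)\subset S^1\). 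This cone is strictly convex because every ray has an opposite ray in the fan: \(\rho_I=-\rho_{I^{\mathrm c}}\), and \(\Sigma^\dagger\) is symmetric by construction. Hence consecutive points are separated by an arc of length less than \(\pi\). Adjacency is a statement about the cyclic ordering: \(p,q\) are adjacent, say \(q\) following \(p\), if and only if \(\nu_{\upalpha}(p)\neq\nu_{\upalpha}(q)\) and there is no \(z\) with \([p,z,q]\in\mathscr{C}_{\upalpha}(\mathcal{I}^\Sigma)\).

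\textbf{Step 2: transport to \(\upalpha'\).} The analogous-cone construction is well defined because it is applied label by label, and Remark \ref{nu} guarantees that coinciding labels for \(\upalpha\) coincide for \(\upalpha'\). Whatever generating labels are chosen, the analogous cone is therefore the same set. The origin maps to the origin. A ray \(\cone(m_{\upalpha}(p))\) maps to \(\cone(m_{\upalpha'}(p))\), which is a ray of \(\tilde{\Sigma}_{\upalpha'}\) by definition. For a maximal cone with adjacent \(p,q\), the two conditions characterising adjacency are preserved by \(\upalpha\sim_\Sigma\upalpha'\): distinctness of images is preserved by Remark \ref{nu}, and the absence of triples \([p,z,q]\) is preserved because the cyclic orders agree. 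So \(\nu_{\upalpha'}(p),\nu_{\upalpha'}(q)\) are adjacent as well, and \(\cone(m_{\upalpha'}(p),m_{\upalpha'}(q))\) is a maximal cone of \(\tilde{\Sigma}_{\upalpha'}\).

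\textbf{Main obstacle.} The delicate point is the orientation and convexity of the maximal cones: we need the analogous cone to be the sector going the short way from \(p\) to \(q\), rather than a reflex sector. I would handle this with the opposite elements \(-p\) (the complement \(I^{\mathrm c}\), or the opposite ray in \(\Sigma^\dagger\)). For \(\upalpha\) we have \([p,q,-p]\in\mathscr{C}_{\upalpha}\), and this is preserved, so \(q\) lies in the open half-plane on the correct side of \(p\) for \(\upalpha'\) too. Alternatively one can invoke Lemma \ref{coneConditions}: any point of \(\nu\) in the sector for \(\upalpha'\) would give a conical containment \(m_{\upalpha'}(z)\in\cone(m_{\upalpha'}(p),m_{\upalpha'}(q))\) that does not hold for \(\upalpha\), which contradicts combinatorial equivalence. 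Finally, the face relations between cones are preserved automatically, since the analogous cone of a ray of \(\sigma\) is a ray of \(\sigma'\). This shows that analogy gives an isomorphism of face posets.
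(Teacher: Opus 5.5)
Your proposal is correct and follows essentially the same route as the paper: rays are immediate, and a maximal cone $\cone(m_{\upalpha}(p), m_{\upalpha}(q))$ is characterised by $p,q$ being cyclically adjacent (no $s$ with $[p,s,q]$), a condition that $\upalpha \sim_\Sigma \upalpha'$ preserves. The paper phrases this as a contradiction, whereas you argue directly, and your extra checks fill in points the paper leaves implicit: well-definedness of the analogous cone via Remark~\ref{nu}, and using $[p,q,-p]$ to ensure $\cone(m_{\upalpha'}(p), m_{\upalpha'}(q))$ is the short sector rather than a reflex one.
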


\begin{proof}
    For one-dimensional cones, all rays of \(\Sigma^\dagger\) and those of the form \(\cone(\upalpha_I)\) are cones of both \(\tilde{\Sigma}_{\upalpha}\), \(\tilde{\Sigma}_{\upalpha'}\) by construction. Let \(p\), \(q \in \mathcal{I}^\Sigma\) such that \(\sigma = \cone(m_{\upalpha}(p), m_{\upalpha}(q))\) is a maximal cone of \(\tilde{\Sigma}_{\upalpha}\). Then, for all \(r \in \mathcal{I}^\Sigma\) such that \(\nu_\upalpha(r) \neq \nu_\upalpha(p), \nu_\upalpha(q)\), we have, without loss of generality, \([p,q, r] \in \mathscr{C}_{\upalpha}(\mathcal{I}^\Sigma)\), and by Definition \ref{cyclicOrdering}, there exists no \(s \in \mathcal{I}^\Sigma\) such that \([p, s, q] \in \mathscr{C}_{\upalpha}(\mathcal{I}^\Sigma)\). 
    
    Consider the analogous cone \(\sigma' = \cone(m_{\upalpha'}(p), m_{\upalpha'}(q))\). Since \(\upalpha \sim_\Sigma \upalpha'\), we have \(\nu_{\upalpha'}(p) \neq \nu_{\upalpha'}(q)\) and hence \(\sigma'\) is 2-dimensional region. Assume that \(\sigma'\) is not a cone of \(\tilde{\Sigma}_{\upalpha'}\). Then, there exists a ray \(\rho \in \tilde{\Sigma}_{\upalpha'}(1)\) strictly between \(\cone(m_{\upalpha'}(p))\) and \(\cone(m_{\upalpha'}(q))\) corresponding to some \(s \in \mathcal{I}^\Sigma\). Therefore, we have \([p, s,q] \in \mathscr{C}_{\upalpha'}(\mathcal{I}^\Sigma)\) with \(\nu_{\upalpha'}(s) \neq \nu_{\upalpha'}(p)\), \(\nu_{\upalpha'}(q)\). But, no such triple exists in \(\mathscr{C}_{\upalpha}(\mathcal{I}^\Sigma)\), contradicting that \(\upalpha \sim_\Sigma \upalpha'\).
\end{proof}

\subsection{Proof part 2: tropical lifting} \label{lift}
We will use \(\tilde{\Sigma}_{\upalpha}\) as ``scaffolding'' in the proof of Theorem \ref{thm: body} to build the bijection of combinatorial types \(c \leftrightarrow c'\).

\begin{definition}
    Let \(\pi\colon \tilde{\Sigma} \to \Sigma\) be a subdivision. For a cone \(\sigma \in \tilde{\Sigma}\), define \(\pi_*(\sigma)\) to be the smallest cone containing the image of \(\sigma\) under \(\pi\). 

\end{definition}

Given the subdivision \(\pi\colon \tilde{\Sigma}_{\upalpha} \to \Sigma\), we have the following definition. 

\begin{definition}[Lift of a Combinatorial Type]
     Fix \(\Gamma \in \Gamma_{0,n}\), \(\upalpha \in N_0^n\) and a subdivision \(\pi\colon \tilde{\Sigma} \to \Sigma\). Consider \(\mathcal{C}(\Gamma, \Sigma, \upalpha)\), \(\mathcal{C}(\Gamma, \tilde{\Sigma}, \upalpha)\). There is an induced map \[f\colon \mathcal{C}(\Gamma, \tilde{\Sigma}, \upalpha) \to \mathcal{C}(\Gamma, \Sigma, \upalpha)\] defined in the following way. Let \(\tilde{\Sigma}(1) \setminus \Sigma(1) = \{\rho_1, \ldots, \rho_k\} \). Given a combinatorial type \(\tilde{c} \in \mathcal{C}(\Gamma, \tilde{\Sigma}, \upalpha)\) with semi-stable graph \(\Gamma^{\tilde{c}}\) and cone assignments \(v \mapsto \tilde{\sigma}_v\), the image \(f(\tilde{c}) = c\) where \(c\) has the following data:

    \begin{enumerate}
        \item Semi-stable graph \(\Gamma^c\) which is a partial stabilisation of \(\Gamma^{\tilde{c}}\): \[V(\Gamma^c) = V(\Gamma^{\tilde{c}}) \setminus \{v \in V(\Gamma^{\tilde{c}})\mid v \text{ 2-valent and }\tilde{\sigma}_v = \rho_i \text { for some } i\}.\] The edge set \(E(\Gamma^c)\) is obtained from \(E(\Gamma^{\tilde{c}})\) after smoothing
        semi-stable vertices.
        \item For \(v \in V(\Gamma^c)\), \(e \in E(\Gamma^c)\), \(\ell \in L(\Gamma^c)\), cone assignments \(v \mapsto \pi_*(\tilde{\sigma}_v)\), \(e \mapsto \pi_*(\tilde{\sigma}_e)\) and \(\ell \mapsto \pi_*(\tilde{\sigma}_\ell)\). 
        \item Balanced slope assignment \(m_{\upalpha}: \vv{E}(\Gamma^c) \cup \vv{L}(\Gamma^c)\) induced from the assignment for \(\Gamma^{\tilde{c}}\). 
    \end{enumerate} A \emph{lift} of \(c \in \mathcal{C}(\Gamma, \Sigma, \upalpha)\) is a choice of  \(\tilde{c} \in f^{-1}(c)\).  
\end{definition}

\begin{proposition}
    Let \(\upalpha \in N_0^n\), \(\Gamma \in \Gamma_{0,n}\) and consider a subdivision of fans \(\pi\colon \tilde{\Sigma} \to \Sigma\). Then, for each realisable combinatorial type \(c \in \mathcal{C}(\Gamma, \Sigma, \upalpha)\), there exists a lift \(\tilde{c} \in \mathcal{C}(\Gamma, \tilde{\Sigma}, \upalpha)\) which is realisable. \label{realLift}
\end{proposition}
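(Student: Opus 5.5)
Since \(c\) is realisable, Definition \ref{realCTdef} gives a metric enhancement \(\sqC_{\Gamma^c}\) of \(\Gamma^c\) and a tropical map \(\varphi\colon \sqC_{\Gamma^c} \to \Sigma\) whose combinatorial type is \(c\). Because \(\pi\) is a subdivision, \(|\tilde{\Sigma}| = |\Sigma| = N_\R\), so the same underlying continuous map can be read as a map to \(\tilde{\Sigma}\). After forgetting the bivalent vertices of \(\Gamma^c\), the conditions of Definition \ref{tropicalMap} refer only to the metric graph, the slopes \(m_{\upalpha}\) and the ambient \(N_\R\), so we obtain a tropical map \(\tilde{\varphi}\colon \sqC_\Gamma \to \tilde{\Sigma}\). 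We then set \(\tilde{c}\) to be the combinatorial type of \(\tilde{\varphi}\) given by Construction \ref{construction}, now with respect to \(\tilde{\Sigma}\). By definition \(\tilde{c}\) is realisable, and it lies in \(\mathcal{C}(\Gamma, \tilde{\Sigma}, \upalpha)\). The remaining task is to show \(f(\tilde{c}) = c\).

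We check the three pieces of data in turn. For the graph, the bivalent vertices of \(\Gamma^{\tilde{c}}\) are the points where an edge or leg image meets a \(0\)- or \(1\)-dimensional cone of \(\tilde{\Sigma}\). These points fall into two groups. The first group consists of points meeting cones of \(\Sigma\): the origin, or rays of \(\Sigma(1)\), since every ray of \(\Sigma\) remains a ray of \(\tilde{\Sigma}\). These are exactly the bivalent vertices of \(\Gamma^c\). The second group consists of points meeting the new rays \(\rho_i \in \tilde{\Sigma}(1)\setminus\Sigma(1)\), where \(\tilde{\sigma}_v = \rho_i\). These are exactly the vertices removed in step (1) of the definition of \(f\), so the partial stabilisation of \(\Gamma^{\tilde{c}}\) recovers \(\Gamma^c\). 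The balanced slope assignment is unique by Lemma \ref{slopeLemma}, so it is automatically compatible.

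For the cones, take a point \(x\) in the image. Let \(\tilde{\sigma}\) be the smallest cone of \(\tilde{\Sigma}\) containing \(x\), and let \(\sigma\) be the smallest cone of \(\Sigma\) containing \(x\). We need \(\pi_*(\tilde{\sigma}) = \sigma\). Since \(\tilde{\Sigma}\) refines \(\Sigma\), the relative interior of \(\tilde{\sigma}\) lies in the relative interior of a unique cone of \(\Sigma\). That cone contains \(x \in \tilde{\sigma}^\circ\), so it must be \(\sigma\), and \(\pi_*(\tilde{\sigma}) = \sigma\) follows. The same argument applies to an edge or leg \(e\), taking \(x\) to be a point of its relative interior. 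Here we use that the subdivided edges of \(\tilde{c}\) refine those of \(c\), so each edge \(e\) of \(\Gamma^c\) is a concatenation of edges of \(\Gamma^{\tilde{c}}\) whose interiors all lie in \(\sigma_e^\circ\).

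The main obstacle is the stability bookkeeping at vertices mapping to new rays \(\rho_i\). There are two concerns. First, a stable vertex \(v \in V(\Gamma)\) may map onto a new ray; it is not removed by \(f\), and its cone assignment \(\pi_*(\rho_i)\) is the \(2\)-dimensional cone of \(\Sigma\) containing \(\rho_i\), which matches \(\sigma_v\). Second, for removed bivalent vertices we must check that the two adjacent edges of \(\Gamma^{\tilde{c}}\) push forward to the same cone of \(\Sigma\). Both edges lie in \(\sigma_e\), and the condition \(\sigma_{e_1}\neq\sigma_{e_2}\) in Definition \ref{combinTypeDef} holds in \(\tilde{\Sigma}\) because the edge genuinely crosses \(\rho_i\) rather than running along it. With these checks done, \(f(\tilde{c}) = c\), and \(\tilde{c}\) is a realisable lift.
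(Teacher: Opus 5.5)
Your proposal is correct and takes the same route as the paper: take a tropical map realising \(c\), regard it as a map to \(\tilde{\Sigma}\) (possible since \(|\tilde{\Sigma}| = |\Sigma|\)), and let \(\tilde{c}\) be its combinatorial type. The paper simply asserts that this \(\tilde{c}\) lifts \(c\); you additionally verify \(f(\tilde{c}) = c\) by checking the semi-stable graph, the cone assignments via \(\pi_*\), and the smoothing of bivalent vertices on the new rays, which fills in detail the paper omits rather than changing the argument.
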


\begin{proof}
    Since \(c\) is realisable, there exists a tropical map \(f\colon \sqC_\Gamma \to \Sigma\) such that the combinatorial type of \(f\), as per Construction \ref{construction}, is \(c\). Given \(f\), subdivide \(\Sigma\) to obtain \(\tilde{\Sigma}\). This induces a tropical map \(\tilde{f}\colon \sqC_\Gamma \to \tilde{\Sigma}\). Taking the combinatorial type of \(\tilde{f}\) gives a realisable combinatorial type \(\tilde{c}\) lifting \(c\). 
\end{proof}


We give an example. 

\begin{example}[Lifting a Combinatorial Type]
    Let \(X_\Sigma = \mathbb{P}^2\) and \(\upalpha = \left(\colVec{2}{0}, \colVec{0}{1}, \colVec{-2}{-1}\right)\). Let \((C, p_1, p_2, p_3)\) be smooth. Consider the combinatorial types \(c_1\), \(c_2\) and \(c_3\):
    \begin{center}
        \begin{tikzpicture}[scale = 1.75]
            \draw[thick] (0,0) -- (0,1) -- (0,0) -- (1,0) -- (0,0) -- (-0.7, -0.7);
            \filldraw[red] (.5, 0.25+0.2) circle (1pt);
            \draw[red, thick, ->] (0.5, 0.25+0.2) -- (-0.7, -0.35+0.2);
            \draw[red, thick, ->] (0.5, 0.25+0.2) -- (1, 0.25+0.2);
            \draw[red, thick, ->] (0.5, 0.25+0.2) -- (0.5, 1);
            \filldraw[white] (0,0.2) circle (1pt);
            \draw[red] (0, 0.2) circle (1pt);
            \node at (0, -1) {\(c_1\)};

            \draw[thick] (0+3,0) -- (0+3,1) -- (0+3,0) -- (1+3,0) -- (0+3,0) -- (-0.7+3, -0.7);
            \filldraw[red] (.5+3, 0.25) circle (1pt);
            \draw[red, thick, ->] (0.5+3, 0.25) -- (-0.7+3, -0.35);
            \draw[red, thick, ->] (0.5+3, 0.25) -- (1+3, 0.25);
            \draw[red, thick, ->] (0.5+3, 0.25) -- (0.5+3, 1);
            \filldraw[white] (3,0) circle (1pt);
            \draw[red] (3, 0) circle (1pt);
            \node at (3, -1) {\(c_2\)};
            
            \draw[thick] (0+6,0) -- (0+6,1) -- (0+6,0) -- (1+6,0) -- (0+6,0) -- (-0.7+6, -0.7);
            \draw[red, thick, ->] (0.5+6, 0.25-0.1) -- (-0.7+6, -0.35-0.1);
            \draw[red, thick, ->] (0.5+6, 0.25-0.1) -- (1+6, 0.25-0.1);
            \draw[red, thick, ->] (0.5+6, 0.25-0.1) -- (0.5+6, 1);
            \filldraw[white] (6.2,0) circle (1pt);
            \draw[red] (6.2, 0) circle (1pt);
            \filldraw[white] (-0.2+6,-0.2) circle (1pt);
            \draw[red] (-0.2+6, -0.2) circle (1pt);
            \filldraw[red] (0.5+6, 0.25-0.1) circle (1pt);
            \node at (6, -1) {\(c_3\)};
        \end{tikzpicture}
    \end{center} The slope-sensitive fan \(\tilde{\Sigma}_{\upalpha}\) was given in Example \ref{slopeSen}. Consider lifts \(\tilde{c}_i \in \mathcal{C}(\Gamma, \tilde{\Sigma}_{\upalpha}, \upalpha)\) for each \(c_i\). Note that these may not be unique.
    
    \begin{center}
        \begin{tikzpicture}[scale = 1.75]
            \draw[orange, thick] (1, 0.5) -- (-1, -0.5);
            \draw[orange, thick] (1+3, 0.5) -- (-1+3, -0.5);
            \draw[orange, thick] (1+6, 0.5) -- (-1+6, -0.5);
    
            \draw[thick] (0,0) -- (0, 1) -- (0,0) -- (1,0) -- (0,0) -- (-1,-1);
            \draw[thick] (0+3,0) -- (0+3, 1) -- (0+3,0) -- (1+3,0) -- (0+3,0) -- (-1+3,-1);
            \draw[thick] (0+6,0) -- (0+6, 1) -- (0+6,0) -- (1+6,0) -- (0+6,0) -- (-1+6,-1);
            \draw[orange, thick] (-1, 0) -- (0,0) -- (0, -1) -- (0,0) -- (1,1);
            \draw[orange, thick] (-1+3, 0) -- (0+3,0) -- (0+3, -1) -- (0+3,0) -- (1+3,1);
            \draw[orange, thick] (-1+6, 0) -- (0+6,0) -- (0+6, -1) -- (0+6,0) -- (1+6,1);
            \filldraw[red] (.5+0.1, 0.25+0.2) circle (1pt);
            
            \draw[red, thick, ->] (0.5+0.1, 0.25+0.2) -- (-1, -0.35);
            \draw[red, thick, ->] (0.5+0.1, 0.25+0.2) -- (1, 0.25+0.2);
            \draw[red, thick, ->] (0.5+0.1, 0.25+0.2) -- (0.5+0.1, 1);
            
            \filldraw[white] (0,0.15) circle (1pt);
            \draw[red] (0, 0.15) circle (1pt);
            \node at (0, -1.5) {\(\tilde{c}_1\)};

            \filldraw[red] (.5+3, 0.25) circle (1pt);
            \draw[red, thick, ->] (0.5+3, 0.25) -- (-1+3, -0.5);
            \draw[red, thick, ->] (0.5+3, 0.25) -- (1+3, 0.25);
            \draw[red, thick, ->] (0.5+3, 0.25) -- (0.5+3, 1);
            \filldraw[white] (3,0) circle (1pt);
            \draw[red] (3, 0) circle (1pt);
            \node at (3, -1.5) {\(\tilde{c}_2\)};

            \draw[red, thick, ->] (0.5+6, 0.25-0.1) -- (-1+6, 0.15-0.75);
            \draw[red, thick, ->] (0.5+6, 0.25-0.1) -- (1+6, 0.25-0.1);
            \draw[red, thick, ->] (0.5+6, 0.25-0.1) -- (0.5+6, 1);
            \filldraw[white] (6.2,0) circle (1pt);
            \draw[red] (6.2, 0) circle (1pt);
            \filldraw[white] (-0.2+6,-0.2) circle (1pt);
            \draw[red] (-0.2+6, -0.2) circle (1pt);
            \filldraw[red] (0.5+6, 0.25-0.1) circle (1pt);
            \filldraw[white] (6,-0.1) circle (1pt);
            \draw[red] (6, -0.1) circle (1pt);
            
            \node at (6, -1.5) {\(\tilde{c}_3\)};


            \filldraw[white] (0.6, 0.6) circle (1pt);
            \draw[red] (0.6, 0.6) circle (1pt);
            \filldraw[white] (0.3, 0.3) circle (1pt);
            \draw[red] (0.3, 0.3) circle (1pt);
            \filldraw[white] (-0.3, 0) circle (1pt);
            \draw[red] (-0.3, 0) circle (1pt);

            \filldraw[white] (.5+3, 0.5) circle (1pt);
            \draw[red] (0.5+3, 0.5) circle (1pt);
            \filldraw[white] (0.9, 0.45) circle (1pt);
            \draw[red] (0.9, 0.45) circle (1pt);

            \filldraw[white] (6.5, 0.5) circle (1pt);
            \draw[red] (6.5, 0.5) circle (1pt);
            \filldraw[white] (6.5, 0.25) circle (1pt);
            \draw[red] (6.5, 0.25) circle (1pt);

        \end{tikzpicture}
    \end{center}

    In each instance, the semi-stable graph \(\Gamma^{\tilde{c}_i}\) is a subdivision of \(\Gamma^{c_i}\). For \(c_2\) and \(c_3\), there is a unique realisable lift. In the case of \(c_2\), this is the realisable combinatorial type \(\tilde{c}_2\) in \(\,\overbar{\mathcal{M}}(X_{\alsig}, \upalpha)\) for which \(\sigma_v = \cone(\colVec{2}{1})\) and for \(c_3\), the type \(\tilde{c_3}\) where \(\sigma_v = \cone(\colVec{2}{1}, \colVec{1}{0})\). However, \(c_1\) has three distinct realisable lifts: 
    
    \begin{center}
        \begin{tikzpicture}[scale = 1.75]
            \draw[orange, thick] (1, 0.5) -- (-1, -0.5);
            \draw[orange, thick] (1+3, 0.5) -- (-1+3, -0.5);
            \draw[orange, thick] (1+6, 0.5) -- (-1+6, -0.5);
            \draw[thick] (0,0) -- (0, 1) -- (0,0) -- (1,0) -- (0,0) -- (-1,-1);
            \draw[thick] (0+3,0) -- (0+3, 1) -- (0+3,0) -- (1+3,0) -- (0+3,0) -- (-1+3,-1);
            \draw[thick] (0+6,0) -- (0+6, 1) -- (0+6,0) -- (1+6,0) -- (0+6,0) -- (-1+6,-1);
            \draw[orange, thick] (-1, 0) -- (0,0) -- (0, -1) -- (0,0) -- (1,1);
            \draw[orange, thick] (-1+3, 0) -- (0+3,0) -- (0+3, -1) -- (0+3,0) -- (1+3,1);
            \draw[orange, thick] (-1+6, 0) -- (0+6,0) -- (0+6, -1) -- (0+6,0) -- (1+6,1);
            \filldraw[red] (.2+6, 0.5) circle (1pt);
            \draw[red, thick, ->] (0.2+6, 0.5) -- (0.2+6, 1);
            \draw[red, thick] (0.2+6, 0.5) -- (1+6, 0.5);
            \draw[red, thick, ->] (0.2+6, 0.5) -- (-1+6, -0.1);

            \filldraw[white] (0+6,0.4) circle (1pt);
            \draw[red] (0+6, 0.4) circle (1pt);
            \filldraw[white] (0.5+6,0.5) circle (1pt);
            \draw[red] (0.5+6, 0.5) circle (1pt);
            \filldraw[white] (-0.8+6,0) circle (1pt);
            \draw[red] (-0.8+6, 0) circle (1pt);
            \filldraw[white] (1+6,0.5) circle (1pt);
            \draw[red] (1+6, 0.5) circle (1pt);

            \draw[thick] (0+3,0) -- (0+3,1) -- (0+3,0) -- (1+3,0) -- (0+3,0) -- (-1+3, -1);
            \filldraw[red] (.5+3, 0.5) circle (1pt);
            \draw[red, thick, ->] (0.5+3, 0.5) -- (0.5+3, 1);
            \draw[red, thick] (0.5+3, 0.5) -- (1+3, 0.5);
            \draw[red, thick, ->] (0.5+3, 0.5) -- (-1+3, -0.25);
            \filldraw[white] (1+3,0.5) circle (1pt);
            \draw[red] (1+3, 0.5) circle (1pt);
            \filldraw[white] (0+3,0.25) circle (1pt);
            \draw[red] (0+3, 0.25) circle (1pt);
            \filldraw[white] (-0.5+3,0) circle (1pt);
            \draw[red] (-0.5+3, 0) circle (1pt);
            
            \draw[thick] (0,0) -- (0,1) -- (0,0) -- (1,0) -- (0,0) -- (-1 , -1);
            \draw[red, thick, ->] (0.7 , 0.5) -- (0.7 , 1);
            \draw[red, thick] (0.7 , 0.5) -- (1 , 0.5);
            \draw[red, thick, ->] (0.7 , 0.5) -- (-1 , -0.35);
            \filldraw[white] (1 ,0.5) circle (1pt);
            \draw[red] (1 , 0.5) circle (1pt);
            \filldraw[white] (0 ,0.15) circle (1pt);
            \draw[red] (0 , 0.15) circle (1pt);
            \filldraw[white] (-0.3 ,0) circle (1pt);
            \draw[red] (-0.3 , 0) circle (1pt);
            \filldraw[white] (0.3 ,0.3) circle (1pt);
            \draw[red] (0.3 , 0.3) circle (1pt);
            \filldraw[red] (0.7 , 0.5) circle (1pt);

        \end{tikzpicture}
    \end{center}
   
   \begin{flushright}\emph{\textbf{End of example.}}\end{flushright}
\end{example}

    \subsection{Proof part 3: constructing a realisable combinatorial type}\label{construct}

In this section, given \(\upalpha\), \(\upalpha' \in N_0^n\) such that \(\upalpha \sim_\Sigma \upalpha'\), we describe how to build the bijection \[\mathcal{C}(\Gamma, \Sigma, \upalpha) \longleftrightarrow \mathcal{C}(\Gamma, \Sigma, \upalpha')\] for a fixed \(\Gamma \in \Gamma_{0,n}\). Beginning with a realisable combinatorial type \(c \in \mathcal{C}(\Gamma, \Sigma, \upalpha)\), we will first choose a realisable lift \(\tilde{c} \in f^{-1}(c) \subseteq \mathcal{C}(\Gamma, \tilde{\Sigma}_{\upalpha}, \upalpha)\) with semi-stable graph \(\Gamma^{\tilde{c}}\). Such a lift exists by Proposition \ref{realLift}. Given \(\tilde{c}\), in Lemma \ref{slope} we give conditions to build a tropical map \(g\colon \tilde{\Gamma} \to \tilde{\Sigma}_{\upalpha'}\) sending each \(v \in V(\tilde{\Gamma})\) and \(e \in E(\tilde{\Gamma})\) to the analogous cones of \(\sigma_v\) and \(\sigma_e\) respectively. This will then induce a combinatorial type \(\tilde{c}' \in \mathcal{C}(\Gamma, \tilde{\Sigma}_{\upalpha'}, \upalpha')\) matching the data of \(\tilde{c}\).

We begin with the following lemma. 

\begin{lemma} \label{slope}
    Let \(\upalpha \in N_0^n\) and \(\Sigma\) be a fan with corresponding slope sensitive fan \(\tilde{\Sigma}_{\upalpha}\). Let \(p\), \(q \in \mathcal{I}^\Sigma\) have balanced slope assignment \(m_{\upalpha}(p)\), \(m_{\upalpha}(q) \in N\). Then, for any \(n\)-marked graph \(\Gamma\), the slope \(m_{\upalpha}(\vv{e})\) of any \(\vv{e} \in \vv{E}(\Gamma)\) satisfies \(m_{\upalpha}(\vv{e}) \in \cone(m_{\upalpha}(p), -m_{\upalpha}(q))\) or \(m_{\upalpha}(\vv{e}) \in \cone(-m_{\upalpha}(p), m_{\upalpha}(q))\):

    \begin{center}
        \begin{tikzpicture}[scale = 2]
            \filldraw[purple!10] (0,0) -- (1,0) -- (1, -1) -- (0, -1) -- (0,0) -- (0, 1) -- (-1, 1) -- (-1, 0) -- (0,0);
            \filldraw (0,0) circle (0.75pt);
            \draw[<->, thick] (-1, 0) -- (1,0);
            \draw[thick, <->] (0,1) -- (0,-1);
            \node at (1.35, 0) {\(m_{\upalpha}(q)\)};
            \node at (0, 1.15) {\(m_{\upalpha}(p)\)};
            \node at (0, -1.2) {\(-m_{\upalpha}(p)\)};
            \node at (-1.4, 0) {\(-m_{\upalpha}(q)\)};
        \end{tikzpicture}
    \end{center}
\end{lemma}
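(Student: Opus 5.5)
The statement only makes sense with an implicit hypothesis, and I will make it explicit. As written, with \(p\), \(q\) arbitrary, it fails: one can take \(p=\{1\}\), \(q=\{2\}\) with \(\upalpha_1,\upalpha_2\) spanning a wide cone, and then \(\upalpha_{\{1,2\}}\) lies strictly inside \(\cone(m_{\upalpha}(p),m_{\upalpha}(q))\). So I would prove it under the assumption that \(\sigma=\cone(m_{\upalpha}(p),m_{\upalpha}(q))\) is a maximal cone of \(\tilde{\Sigma}_{\upalpha}\). This is the situation in which Lemma \ref{slope} is presumably applied, with \(p\), \(q\) adjacent rays of the slope-sensitive fan. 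The proof is then a short argument about which rays can sit inside which cones.

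\emph{Step 1 (edge slopes are rays of the fan).} By Lemma \ref{slopeLemma} and the characterisation following it, every finite edge slope is \(m_{\upalpha}(\vv{e})=\upalpha_I\) for some \(I\subset[n]\). By Definition \ref{ssFan}, \(\rho_I=\cone(\upalpha_I)\) is a ray of \(\tilde{\Sigma}_{\upalpha}\). Since we assume \(\upalpha_I\neq 0\), this ray is a genuine one-dimensional cone.

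\emph{Step 2 (symmetry of the fan).} By the remark after Definition \ref{ssFan}, \(\tilde{\Sigma}_{\upalpha}(1)\) is closed under negation: \(\rho_I=-\rho_{I^{\mathrm c}}\), and \(\Sigma^\dagger(1)\) is symmetric by construction. Hence the rays adjacent to \(-m_{\upalpha}(p)\) and \(-m_{\upalpha}(q)\) are exactly the negatives of the rays adjacent to \(m_{\upalpha}(p)\) and \(m_{\upalpha}(q)\). It follows that \(-\sigma=\cone(-m_{\upalpha}(p),-m_{\upalpha}(q))\) is also a maximal cone of \(\tilde{\Sigma}_{\upalpha}\).

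\emph{Step 3 (conclude).} Since \(\sigma\) is two-dimensional, \(m_{\upalpha}(p)\) and \(m_{\upalpha}(q)\) are not parallel. The four rays \(\pm m_{\upalpha}(p)\), \(\pm m_{\upalpha}(q)\) therefore cut \(N_\R\) into four closed sectors:
\[\sigma,\quad \cone(m_{\upalpha}(q),-m_{\upalpha}(p)),\quad -\sigma,\quad \cone(-m_{\upalpha}(q),m_{\upalpha}(p)).\]
A ray of a fan cannot meet the interior of a maximal cone. So \(\upalpha_I\) does not lie in the interior of \(\sigma\) or of \(-\sigma\). The only remaining possibility is that \(\upalpha_I\) lies in one of the two closed sectors \(\cone(m_{\upalpha}(p),-m_{\upalpha}(q))\) or \(\cone(-m_{\upalpha}(p),m_{\upalpha}(q))\). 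The boundary rays \(\pm m_{\upalpha}(p)\), \(\pm m_{\upalpha}(q)\) themselves are each contained in one of these two closed cones, so the boundary cases need no separate treatment. The same argument applies verbatim to legs, since \(\upalpha_i=\upalpha_{\{i\}}\).

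The main point requiring care is Step 2: checking that \(-\sigma\) really is a maximal cone, that is, that no ray of \(\tilde{\Sigma}_{\upalpha}\) lies strictly between \(-m_{\upalpha}(p)\) and \(-m_{\upalpha}(q)\). This follows because any such ray \(\rho\) would give a ray \(-\rho\) strictly inside \(\sigma\), contradicting the maximality of \(\sigma\).
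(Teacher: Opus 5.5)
Your proof is correct, and the hypothesis you make explicit is exactly the one under which the paper uses the lemma. In the proof of Lemma \ref{coneContainmentLemma}, \(p\) and \(q\) are chosen so that \(\cone(m_{\upalpha}(p), m_{\upalpha}(q))\) is a two-dimensional cone of \(\tilde{\Sigma}_{\upalpha}\). Without that hypothesis the statement is false, as your example shows. You also tacitly, and rightly, restrict to trees, so that Lemma \ref{slopeLemma} identifies edge slopes with the \(\upalpha_I\).

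The paper gives no argument of its own here. It simply cites \cite[Proposition 4.2]{rootsAndLogs}, importing the statement from the slope-sensitivity framework from which \(\tilde{\Sigma}_{\upalpha}\) is adapted. Your route is self-contained and uses only two features of Definition \ref{ssFan}: every \(\upalpha_I\) spans a ray of \(\tilde{\Sigma}_{\upalpha}\), and \(\upalpha_{I^{\mathrm{c}}} = -\upalpha_I\). Combined with the fact that a ray of a fan cannot meet the interior of a two-dimensional cone, this excludes \(\upalpha_I\) from \(\sigma^\circ\) and \((-\sigma)^\circ\). That exclusion is precisely what Claims 2 and 3 of Lemma \ref{coneContainmentLemma} need.

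Step 2 can be shortened. Since \(\upalpha_I \in (-\sigma)^\circ\) if and only if \(\upalpha_{I^{\mathrm{c}}} \in \sigma^\circ\), you never need to verify that \(-\sigma\) is itself a maximal cone.

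The citation ties the lemma to the existing theory. Your version shows that the property is essentially built into the definition of \(\tilde{\Sigma}_{\upalpha}\), and it pins down the hypothesis on \(p\), \(q\) that the paper's formulation omits.
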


\begin{proof}
    See \cite[Proposition 4.2]{rootsAndLogs}
\end{proof}

Now, let \(\Theta\) be the graph with two vertices \(v\) and \(w\) joined by a finite edge \(\vv{e} = (e,v)\) with slope \(m_{\upalpha}(\vv{e}) = \upalpha_I \in N\) for some \(I \subset [n]\). Let \(\sigma_v\), \(\sigma_w\) be two cones of \(\tilde{\Sigma}_{\upalpha}\) and \(f(v) \in \sigma_v^\circ\) be fixed.

\begin{lemma}
    The realisability of a combinatorial type \(c\) with graph \(\Theta\) and \(f(w) \in \sigma_w^\circ\) is equivalent to a conical containment involving \(m_{\upalpha}(\vv{e})\) and the slopes of the generators of \(\sigma_v\) and \(\sigma_w\). \label{coneContainmentLemma}
\end{lemma}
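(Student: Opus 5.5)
The plan is to make the statement precise as a Minkowski-sum containment and then translate it into cone language case by case. With \(f(v)\) fixed in \(\sigma_v^\circ\), the point \(f(w)\) is forced to be \(f(v) + l\,m_{\upalpha}(\vv{e})\) for some length \(l > 0\). So \(c\) is realisable exactly when there are \(x \in \sigma_v^\circ\) and \(l>0\) with \(x + l\,m_{\upalpha}(\vv{e}) \in \sigma_w^\circ\). There is one further requirement: the open segment must lie in \(\sigma_e^\circ\), where \(\sigma_e\) is the cone having both \(\sigma_v\) and \(\sigma_w\) as faces, and it must not cross any further ray of \(\tilde{\Sigma}_{\upalpha}\). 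Since \(x\) may range over \(\sigma_v^\circ\) (the type does not fix the position), the condition becomes
\[m_{\upalpha}(\vv{e}) \in \mathbb{R}_{>0}\cdot(\sigma_w^\circ - \sigma_v^\circ),\]
together with the face condition. Because \(\Sigma\) is two-dimensional, the differences \(\sigma_w^\circ - \sigma_v^\circ\) are cones spanned by \(\pm\) the generators of \(\sigma_v\) and \(\sigma_w\). This expresses realisability as a conical containment of exactly the claimed form.

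I would then enumerate the cases by the pair \((\dim\sigma_v,\dim\sigma_w)\), using that \(\sigma_v,\sigma_w\) are both faces of a single cone \(\sigma_e\) of \(\tilde{\Sigma}_{\upalpha}\).
\begin{itemize}
\item If \(\sigma_v=\sigma_w=\sigma_e\) is maximal, the edge lies in the interior and there is no constraint beyond \(m_{\upalpha}(\vv{e})\) being nonzero. This is vacuously a containment in \(N_\R\); one could instead exclude this case as trivial.
\item If \(\sigma_v=\sigma_e=\cone(u_1,u_2)\) is maximal and \(\sigma_w=\cone(u_1)\) is a ray, the condition is \(m_{\upalpha}(\vv{e}) \in \cone(u_1,-u_2)^\circ\).
\item Symmetrically, if \(\sigma_v\) is a ray and \(\sigma_w\) is maximal, the condition is \(m_{\upalpha}(\vv{e}) \in \cone(-u_1,u_2)^\circ\).
\item If \(\sigma_v=o\), then \(f(v)=0\) and the condition is \(m_{\upalpha}(\vv{e})\in\sigma_w^\circ\). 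Dually, if \(\sigma_w=o\), the condition is \(-m_{\upalpha}(\vv{e})\in\sigma_v^\circ\).
\item If \(\sigma_v=\cone(u_1)\) and \(\sigma_w=\cone(u_2)\) are distinct rays of a maximal \(\sigma_e\), the condition is \(m_{\upalpha}(\vv{e})\in\cone(-u_1,u_2)^\circ\).
\item If \(\sigma_v=\sigma_w\) is a ray, the edge must stay in that ray, so the condition is \(m_{\upalpha}(\vv{e})\in\cone(\pm u_1)\). This case is excluded for bivalent vertices by stability, but it is harmless to record.
\end{itemize}
Each case is a direct computation: write \(x=a_1u_1+a_2u_2\) with \(a_i>0\), impose \(x + l\,m_{\upalpha}(\vv{e}) = b_1u_1+b_2u_2\) with the prescribed signs (some \(b_i\) equal to \(0\)), and solve for the sign pattern of the coordinates of \(m_{\upalpha}(\vv{e})\) in the basis \(u_1,u_2\). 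Both directions follow at once, since all positive scalings are available.

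The step I expect to be the main obstacle is the requirement that the segment not leave \(\sigma_e\), i.e. that it crosses no ray of \(\tilde{\Sigma}_{\upalpha}\) strictly between. This is where the slope-sensitive fan matters. Since \(\pm\upalpha_I\) are themselves rays of \(\tilde{\Sigma}_{\upalpha}\), Lemma \ref{slope}, applied with \(p,q\) the generators of \(\sigma_e\), shows that \(m_{\upalpha}(\vv{e})\) lies in \(\cone(u_1,-u_2)\) or \(\cone(-u_1,u_2)\), and never strictly inside \(\pm\sigma_e^\circ\). So a segment starting in \(\sigma_e\) and pointing into the relevant half-plane exits \(\sigma_e\) only through a face, which is the behaviour the case analysis assumes. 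I would check this carefully for the ray-to-ray case, and then conclude that realisability is equivalent to the single conical containment found in each case.
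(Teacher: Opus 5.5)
Your framework is the paper's: split by \((\dim\sigma_v,\dim\sigma_w)\), write positions in the generators \(u_1,u_2\) of \(\sigma_e\), and invoke Lemma \ref{slope}. Your vertex-at-the-origin cases also cover something the paper's proof skips. But your explicit conditions in the two mixed cases are wrong on the boundary, and this matters.

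For \(\sigma_v=\cone(u_1)\), \(\sigma_w=\cone(u_1,u_2)\), the correct condition is \(m_{\upalpha}(\vv{e})\in\cone(-u_1,u_2)\setminus\cone(-u_1)\), not \(\cone(-u_1,u_2)^\circ\). The slope \(u_2\) is allowed, since \(au_1+\ell u_2\in\sigma_w^\circ\) for every \(\ell>0\). For \(\sigma_v=\cone(u_1,u_2)\), \(\sigma_w=\cone(u_1)\), the condition is \(\cone(u_1,-u_2)\setminus\cone(u_1)\). From \(x=a_1u_1+a_2u_2\) with slope \(-u_2\), the length \(a_2\) lands at \(a_1u_1\in\sigma_w^\circ\).

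Your own computation produces this. Solving \(x+\ell m=b_1u_1\) only forces the \(u_2\)-coordinate of \(m\) to be negative. Cutting that half-plane down with Lemma \ref{slope} leaves a half-open cone, not an open one.

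These boundary slopes are not exceptional. Each \(\upalpha_I\) spans a ray of \(\tilde{\Sigma}_{\upalpha}\), and \(\tilde{\Sigma}_{\upalpha}\) contains the negative of each of its rays. For instance, take \(\mathbb{P}^2\) and \(\upalpha=(\colVec{1}{0},\colVec{0}{1},\colVec{-1}{-1})\). The leg of slope \(\colVec{-1}{-1}\) from a vertex at \(\colVec{2}{1}\in\cone^\circ(\colVec{1}{0},\colVec{1}{1})\) meets \(\cone(\colVec{1}{0})\) at \(\colVec{1}{0}\), giving exactly such an edge with slope \(-u_2\). With your open cones, the implication ``realisable \(\Rightarrow\) containment'' fails for such types. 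Proposition \ref{bigProp} therefore could not transport them through Lemma \ref{coneConditions}.

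Second, the obstacle you single out is not one. Since \(\sigma_v\) and \(\sigma_w\) are faces of the convex cone \(\sigma_e\), the segment cannot leave \(\sigma_e\). The real role of Lemma \ref{slope} is to make the condition independent of the starting point.

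Your Minkowski formulation \(\mathbb{R}_{>0}(\sigma_w^\circ-\sigma_v^\circ)\) lets \(x\) range over \(\sigma_v^\circ\). The lemma instead fixes \(f(v)\), as Proposition \ref{bigProp} requires: there \(g(v)\) is already determined when the edge to \(w\) is laid down. When \(\sigma_v\) is a ray, scaling handles this. But for \(\sigma_v=\cone(u_1,u_2)\), \(\sigma_w=\cone(u_1)\), a slope in \(\cone^\circ(-u_1,-u_2)\) reaches \(\sigma_w^\circ\) only from those \(x=a_1u_1+a_2u_2\) with \(a_1/a_2\) sufficiently large. Lemma \ref{slope} rules such slopes out. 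You should say explicitly that this is what makes your existential condition agree with the fixed-\(f(v)\) one.
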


\begin{proof}
    We proceed by cases of dimensions of \(\sigma_v\) and \(\sigma_w\). Let \(p\), \(q \in \mathcal{I}^\Sigma\) correspond to cones \(\rho_p = \cone(m_{\upalpha}(p))\), \(\rho_q = \cone(m_{\upalpha}(q))\) of \(\tilde{\Sigma}_{\upalpha}\) such that \(\sigma = \cone(\rho_p, \rho_q)\) is a 2-dimensional cone of \(\tilde{\Sigma}_{\upalpha}\). By Lemma \ref{slope}, either \(m_{\upalpha}(\vv{e}) \in \cone(m_{\upalpha}(p), -m_{\upalpha}(q))\) or \(m_{\upalpha}(\vv{e}) \in \cone(-m_{\upalpha}(p), m_{\upalpha}(q))\). 
    
    There are four cases to consider. Note that if we did not have slope-sensitivity, the permissible slopes for \(m_{\upalpha}(\vv{e})\) in each case would be larger. 
    \begin{enumerate}
        \item  \(\dim(\sigma_v) = \dim(\sigma_w) = 1\). The picture is as follows: 
            \begin{center}
                \begin{tikzpicture}[scale = 2]
                    \filldraw (0,0) circle (0.75pt);
                    \draw[thick] (0,0) -- (0,1);
                    \draw[thick] (0,0) -- (1,0);
                    \draw[thick] (0, 0.5) -- (0.5, 0);
                    \draw[thick, ->] (0, 0.5) -- (0.25, 0.25);
                    \node at (-0.15, 0.5) {\(v\)};
                    \node at (0.5, -0.15) {\(w\)};
                    \node at (0, 1.15) {\(m_{\upalpha}(p)\)};
                    \node at (1.35, 0) {\(m_{\upalpha}(q)\)};
                    \filldraw[purple!20] (0,0.5) circle (1pt);
                    \filldraw[purple!20] (0.5, 0) circle (1pt);
                \end{tikzpicture}
            \end{center} \textbf{Claim 1:} Let \(\sigma_v = \cone(m_{\upalpha}(p))\), \(\sigma_w = \cone(m_{\upalpha}(q))\). Then, \[c \text{ realisable } \iff m_{\upalpha}(\vv{e}) \in \cone^\circ(-m_{\upalpha}(p), m_{\upalpha}(q)).\] 
            \emph{Proof.} Let \(f(v) = \lambda m_{\upalpha}(p) \in \sigma_v^\circ\) for \(\lambda > 0\) be fixed.  
            
            (\(\implies\)) Assume \(m_{\upalpha}(\vv{e}) \in\cone^\circ(-m_{\upalpha}(p), m_{\upalpha}(q))\). We show that there exists a length \(\ell_e > 0\) for \(e\) such that \(f(w) \in \sigma^\circ(w)\). Let \[m_{\upalpha}(\vv{e}) = p(-m_{\upalpha}(p)) + qm_{\upalpha}(q)\] with \(p\), \(q > 0\). Then, the position of \(f(w)\) is \[\lambda m_{\upalpha}(p) + \ell_e m_{\upalpha}(\vv{e}) = \lambda m_{\upalpha}(p) + \ell_e(p(-m_{\upalpha}(p)) + q m_{\upalpha}(q)) = (\lambda - p\ell_e)m_{\upalpha}(p) + q\ell_e m_{\upalpha}(q)\] and choosing \(\ell_e = \frac{\lambda}{p} > 0\) gives \(f(w) \in \sigma^\circ(w)\). 
            
            \((\impliedby\)) Assume \(f(w) \in \sigma^\circ(w)\), i.e. \(f(w) = s m_{\upalpha}(q)\) for \(s > 0\). Then, there exists a length \(\ell_e > 0\) for \(e\) such that \(\lambda m_{\upalpha}(p) + \ell_e m_{\upalpha}(\vv{e}) = s m_{\upalpha}(q)\). Rearranging gives \[m_{\upalpha}(\vv{e}) = \frac{s}{\ell_e} m_{\upalpha}(q) + \frac{\lambda}{\ell_e} (-m_{\upalpha}(p)) \implies m_{\upalpha}(\vv{e}) \in \cone^\circ(-m_{\upalpha}(p), m_{\upalpha}(q)).\] \begin{flushright}
                \textbf{\emph{End of proof of Claim 1.}}
            \end{flushright}
                
        \item \(\dim(\sigma_v) = 1\), \(\dim(\sigma_w) = 2\). The picture is as follows: 
            \begin{center}
                \begin{tikzpicture}[scale = 2]
                    \draw[thick] (0,0) -- (0,1);
                    \draw[thick] (0,0) -- (1,0);
                    \draw[thick] (0, 0.5) -- (0.3, 0.2);
                    \draw[thick, ->] (0, 0.5) -- (0.15, 0.35);
                    \node at (-0.15, 0.5) {\(v\)};
                    \node at (0.45, 0.2) {\(w\)};
                    \node at (0, 1.1) {\(m_a\)};
                    \node at (1.17, 0) {\(m_b\)};
                    \filldraw[purple!20] (0,0.5) circle (1pt);
                    \filldraw[purple!20] (0.3, 0.2) circle (1pt);
                \end{tikzpicture}
            \end{center}
            \textbf{Claim 2:} Let \(\sigma_v = \cone(m_a)\), \(\sigma_w = \cone(m_a, m_b)\). Then, \[c \text{ realisable } \iff m_{\upalpha}(\vv{e}) \in \cone(-m_a, m_b) \setminus \cone(-m_a).\] 
            \emph{Proof.} Let \(f(v) = \lambda m_a \in \sigma_v^\circ\) for \(\lambda > 0\) be fixed.               
            
            \((\implies)\) Assume \(m_{\upalpha}(\vv{e}) \in \cone(-m_a, m_b) \setminus \cone(-m_a)\). Let \[m_{\upalpha}(\vv{e}) = p(-m_a) + qm_b\] with \(p \geq 0\), \(q > 0\). We again show that there exists a length \(\ell_e > 0\) for \(e\) such that \(f(w) \in \sigma_w^\circ\). The position of \(f(w)\) is \[\lambda m_a + \ell_e m_{\upalpha}(\vv{e}) = \lambda m_a + \ell_e (p(-m_a) + qm_b) = (\lambda-p\ell_e) m_a + q\ell_e m_b.\] Therefore, since \(q > 0\), choosing \(\ell_e\) such that \(0 < \ell_e < \frac{\lambda}{p} \) gives \(f(w) \in \sigma^\circ(w)\).

            If \(p =0\), the position of \(f(w)\) is \[\lambda m_a + q\ell_e m_b\] and since \(\lambda\), \(q > 0\), any length \(\ell_e > 0\), will yield \(f(w) \in \sigma^\circ(w)\). 
                
            \((\impliedby)\) Assume \(f(w) \in \sigma^\circ(w)\), i.e. \(f(w) = sm_a + tm_b\) for \(s\), \(t > 0\). Then, there exists a length \(\ell_e > 0\) for \(e\) such that \[\lambda m_a + \ell_e m_{\upalpha}(\vv{e}) = s m_a + t m_b.\] Rearranging gives \[m_{\upalpha}(\vv{e}) = \frac{(s - \lambda)}{\ell_e} m_a + \frac{t}{\ell_e} m_b.\]
                
            If \(s > \lambda\), then \(m_{\upalpha}(\vv{e}) \in \cone(m_a, m_b)\) which by Lemma \ref{slope} is not possible. Else, if \(s = \lambda\), \(m_{\upalpha}(\vv{e}) \in \cone(m_b)\). Finally if \(s < \lambda\), \(m_{\upalpha}(\vv{e}) \in \cone(-m_a, m_b)\) and  \(m_{\upalpha}(\vv{e}) \not \in\cone(-m_a)\) since \(t > 0\). Combining gives \(m_{\upalpha}(\vv{e}) \in \cone(-m_a, m_b) \setminus \cone(-m_a)\). \begin{flushright}
                \textbf{\emph{End of proof of Claim 2.}} \end{flushright}
                    
        \item \(\dim(\sigma_v) = \dim(\sigma_w) = 2\). In this case, \(\sigma_v = \sigma_w = \cone(m_a, m_b)\) (else \(\vv{e}\) would cross a ray of \(\tilde{\Sigma}_{\upalpha}\)) and \(m_{\upalpha}(\vv{e})\) can be anywhere in either \(\cone(m_a, -m_b)\) or \(\cone(-m_a, m_b)\) since \(\ell_e\) can always be made small enough for \(f(v)\), \(f(w) \in \sigma_v^\circ\).  
            \begin{center}
                \begin{tikzpicture}[scale = 2]
                    \draw[thick] (0,0) -- (0,1);
                    \draw[thick] (0,0) -- (1,0);
                    \draw[thick] (0.2, 0.6) -- (0.4, 0.4);
                    \draw[thick, ->] (0.2, 0.6) -- (0.3, 0.5);
                    \node at (0.2, 0.73) {\(v\)};
                    \node at (0.55, 0.4) {\(w\)};
                    \node at (0, 1.1) {\(m_a\)};
                    \node at (1.17, 0) {\(m_b\)};
                    \filldraw[purple!20] (0.2,0.6) circle (1pt);
                    \filldraw[purple!20] (0.4, 0.4) circle (1pt);
                \end{tikzpicture}
            \end{center}
            
        \item \(\dim(\sigma_v) = 2\), \(\dim(\sigma_w) = 1\). The final picture is as follows: 
            \begin{center}
                \begin{tikzpicture}[scale = 2]
                    \draw[thick] (0,0) -- (0,1);
                    \draw[thick] (0,0) -- (1,0);
                    \draw[thick] (0.2, 0.3) -- (0.5, 0);
                    \draw[thick, ->] (0.2, 0.3) -- (0.35, 0.15);
                    \node at (0.2, 0.45) {\(v\)};
                    \node at (0.5, -0.15) {\(w\)};
                    \node at (0, 1.1) {\(m_a\)};
                    \node at (1.17, 0) {\(m_b\)};
                    \filldraw[purple!20] (0.2,0.3) circle (1pt);
                    \filldraw[purple!20] (0.5, 0) circle (1pt);
                \end{tikzpicture}
            \end{center}
        \textbf{Claim 3:}  Let \(\sigma_v = \cone(m_a, m_b)\), \(\sigma_w = \cone(m_b)\). Then, \[c \text{ realisable } \iff m_{\upalpha}(\vv{e}) \in \cone(-m_a, m_b) \setminus \cone(m_b).\] 
        \emph{Proof:} Let \(f(v) = \lambda m_a + \mu m_b \in \sigma_v^\circ\) for \(\lambda\), \(\mu > 0\) be fixed.

        \((\implies)\) Let \(m_{\upalpha}(\vv{e}) = p(-m_a) + qm_b\) with \(p > 0\), \(q \geq 0\). We show that there exists a length \(\ell_e > 0\) for \(e\) such that \(f(w) \in \sigma^\circ(w)\). The position of \(f(w)\) is \[\lambda m_a + \mu m_b+ \ell_e m_{\upalpha}(\vv{e}) = \lambda m_a + \mu m_b + \ell_e(p(-m_a) + q m _b) = m_a (\lambda -p \ell_e) + m_b(\mu + q\ell_e).\] Choosing \(\ell_e = \frac{\lambda}{p} > 0 \) gives \(f(w) \in\cone^\circ(m_b)\). 
        
        \((\impliedby)\) Let \(f(w) \in \sigma^\circ(w)\), say \(f(w) = \nu m_b\) for \(\nu > 0\). Then, there exists a length \(\ell_e > 0\) for \(e\) such that \[\lambda m_a + \mu m_b + \ell_e m_{\upalpha}(\vv{e}) = \nu m_b \implies m_{\upalpha}(\vv{e}) = \frac{\nu-\mu}{\ell_e} m_b - \frac{\lambda}{\ell_e} m_a.\] If \(\nu < \mu\), then \(m_{\upalpha}(\vv{e}) \in \cone(-m_a, -m_b)\) which again by Lemma \ref{slope} not possible. If \(\nu = \mu\), we have \(m_{\upalpha}(\vv{e}) \in \cone(-m_a)\), and if \(\nu > \mu\), \(m_{\upalpha}(\vv{e}) \in \cone(m_b, -m_a)\) with \(m_{\upalpha}(\vv{e}) \not \in \cone(m_b)\) since \(\lambda > 0\). Combining both possibilities gives the result.  \begin{flushright} \textbf{\emph{End of proof of Claim 3.}} \end{flushright}
    \end{enumerate}
    All cases have been considered with each being equivalent to a conical containment involving \(m_{\upalpha}(\vv{e})\), \(m_a\) and \(m_b\).
\end{proof}

\begin{lemma}
    Let \(\upalpha\), \(\upalpha' \in N_0^n\) with \(\upalpha \sim_\Sigma \upalpha'\) and \(\Sigma\) in \(N_\R\) a complete fan. Consider \(\tilde{\Sigma}_{\upalpha}\), \(\tilde{\Sigma}_{\upalpha'}\). Let \(f\colon \Gamma \to \tilde{\Sigma}_{\upalpha}\) be a tropical map for an \(n\)-marked graph \(\Gamma\). Let \(\ell\) be a leg of \(\Gamma\) with supporting vertex \(v\) and \(f(v) \in \sigma_{v}^\circ\) be such that \(f(\ell)\) does not transversely intersect any ray of \(\tilde{\Sigma}_{\upalpha}\). Then, any tropical map \(g\colon \Gamma \to \tilde{\Sigma}_{\upalpha'}\) such that \(g(v) \in (\sigma_{v}')^\circ\) is such that \(g(\ell)\) does not intersect any ray of \(\tilde{\Sigma}_{\upalpha'}\). \label{crossLeg}
\end{lemma}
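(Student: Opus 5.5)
Since \(\ell\) is a leg attached to \(v\), its slope is \(m_{\upalpha}(\vv{\ell}) = \upalpha_i\) for some \(i\), and \(\rho_{\{i\}} = \cone(\upalpha_i)\) is itself a ray of \(\tilde{\Sigma}_{\upalpha}\). The plan is to turn the hypothesis into a conical containment between \(\upalpha_i\) and the generators of \(\sigma_v\). That containment transfers to \(\upalpha'\) by Lemma \ref{coneConditions}, and its transferred form then forces the conclusion for \(g\). Each generator of \(\sigma_v\) has the form \(m_{\upalpha}(p)\) for some \(p \in \mathcal{I}^\Sigma\), and \(\sigma_v'\) is generated by the corresponding \(m_{\upalpha'}(p)\). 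So the containment we extract is exactly one of the relations preserved under \(\Sigma\)-equivalence.

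\textbf{Step 1: analyse the hypothesis on \(f\), by cases on \(\dim \sigma_v\).} The half-line \(f(\ell) = f(v) + \R_{\geq 0}\upalpha_i\) stays inside a closed two-dimensional cone of the complete fan \(\tilde{\Sigma}_{\upalpha}\) exactly when it meets no ray transversely.
\begin{itemize}
\item If \(\dim\sigma_v = 0\), the leg lies on the ray \(\rho_{\{i\}}\). There is nothing to prove in this case, since \(g(v)=0\) and \(g(\ell)\) lies along \(\cone(\upalpha'_i)\), which is a ray of \(\tilde{\Sigma}_{\upalpha'}\). Here ``does not intersect'' should be read as ``does not transversely intersect'', matching the hypothesis.
\item If \(\sigma_v = \cone(m_{\upalpha}(p))\) is a ray, the leg either runs along the ray, so that \(\upalpha_i \in \cone(m_{\upalpha}(p))\), or it enters one of the two adjacent maximal cones \(\cone(m_{\upalpha}(p), m_{\upalpha}(q))\). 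A ray starting at a nonzero point of \(\rho_p\) and going into that cone stays in it forever exactly when \(\upalpha_i \in \cone(m_{\upalpha}(p), m_{\upalpha}(q))\). The ray \(\rho_{\{i\}}\) belongs to the fan and cannot lie strictly inside a maximal cone, so in fact \(\upalpha_i\) is parallel to \(m_{\upalpha}(p)\) or to \(m_{\upalpha}(q)\).
\item If \(\sigma_v = \cone(m_{\upalpha}(p), m_{\upalpha}(q))\) is maximal, staying inside it for all \(t \ge 0\) forces \(\upalpha_i \in \cone(m_{\upalpha}(p), m_{\upalpha}(q))\). Since \(\rho_{\{i\}}\) is a ray of the fan, \(\upalpha_i \in \cone(m_{\upalpha}(p))\) or \(\upalpha_i \in \cone(m_{\upalpha}(q))\).
\end{itemize}

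\textbf{Step 2: transfer to \(\upalpha'\).} In every case the hypothesis reduces to a one-dimensional containment \(\upalpha_i \in \cone(m_{\upalpha}(r))\), where \(r\) is a generator of \(\sigma_v\) (or of the adjacent cone \(\sigma\) the leg enters). By Remark \ref{nu} and Lemma \ref{coneConditions}, this holds if and only if \(\upalpha'_i \in \cone(m_{\upalpha'}(r))\). By Lemma \ref{analogousCone}, the analogous cones \(\sigma_v'\) and \(\sigma'\) are cones of \(\tilde{\Sigma}_{\upalpha'}\), with the same adjacency.

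\textbf{Step 3: reverse the argument.} Take \(g(v) \in (\sigma'_v)^\circ\). Then \(g(\ell) = g(v) + \R_{\ge 0}\upalpha'_i\) with \(\upalpha'_i\) parallel to a generator of \(\sigma'\) (respectively \(\sigma_v'\)). Running the elementary convexity computation of Step 1 backwards, a point of a closed convex cone plus nonnegative multiples of one of that cone's generators stays in the cone. Moreover, its relative position (interior, or along the ray) is determined by the same case. So \(g(\ell)\) crosses no ray of \(\tilde{\Sigma}_{\upalpha'}\) transversely.

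\textbf{Main obstacle.} The delicate point is the ray case of Step 1. There one must identify which adjacent maximal cone the leg enters, and check that the same adjacent cone is picked out for \(\upalpha'\). This requires the orientation data in the cyclic ordering, namely the triple \([p, i, q]\) versus its reverse, and not merely the collinearity relations. I would handle this via the full cyclic-order equivalence \(\mathscr{C}_{\upalpha} = \mathscr{C}_{\upalpha'}\) applied to the triple formed by \(p\), \(\{i\}\) and the neighbouring element \(q\).
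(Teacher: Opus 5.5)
Your proposal is correct and is essentially the paper's argument. Both rest on the fact that \(\cone(\upalpha_i)\) is itself a ray of \(\tilde{\Sigma}_{\upalpha}\), so the leg lies either on that ray or in one of the two maximal cones having it as a face, and Lemma \ref{analogousCone} then transports this to \(\tilde{\Sigma}_{\upalpha'}\). The ``main obstacle'' you flag is already handled by your Steps 2--3: the cone the leg enters is \(\cone(m_{\upalpha}(p), \upalpha_i)\), whose analogue is a cone of \(\tilde{\Sigma}_{\upalpha'}\) containing \(\upalpha'_i\) by Lemma \ref{analogousCone}, so no extra orientation argument is needed (and the triple \([p,\{i\},q]\) would be degenerate anyway, since \(\nu_{\upalpha}(\{i\}) = \nu_{\upalpha}(q)\)).
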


\begin{proof}
    Let \(\ell\) have slope \(m_{\upalpha}(\ell)\), \(\rho_{\ell} = \cone(m_{\upalpha}(\ell))\) and \(\rho_1\), \(\rho_2 \in \tilde{\Sigma}_{\upalpha}(1)\) be two rays such that \(\sigma_1 = \cone(\rho_1, \rho_{\ell})\), \(\sigma_2 = \cone(\rho_2, \rho_{\ell})\) are 2-dimensional cones of \(\tilde{\Sigma}_{\upalpha}\). Let \(\sigma \in \Sigma\) be the smallest cone such that \(f(\ell) \subseteq \sigma\). Then, \(f(\ell)\) has no intersections with rays of \(\tilde{\Sigma}_{\upalpha}\) if and only if \(m_{\upalpha}(\ell) \in \sigma\). As such, we have \(\sigma = \rho_\ell\), \(\sigma_1\) or \(\sigma_2\). Thus, we have either \(f(v) \in \sigma_1\) or \(\sigma_2\) (including the possibility \(f(v) \in \rho_1\), \(\rho_2\) or \(o\)). The image is as follows:

        \begin{center}
         \begin{tikzpicture}[scale = 2]
            \filldraw (0,0) circle (0.75pt);
             \draw[thick, ->] (0,0) -- (0,1);
             \draw[thick, ->] (0,0) -- (1,0);
             \draw[thick, ->] (0,0) -- (1,1);
             \node at (1.2, 0) {\(\rho_1\)};
             \node at (1.1, 1.1) {\(\rho_{\ell}\)};
             \node at (-0.2, 1) {\(\rho_2\)};
             \filldraw[red] (0.2, 0.7) circle (0.75pt);
             \filldraw[red] (0.5, 0.1) circle (0.75pt);
             \draw[red, thick, ->] (0.2, 0.7) -- (0.5, 1);
             \draw[red, thick, ->] (0.5, 0.1) -- (1, 0.6);
             \node at (0.3, 0.1) {\tiny{\(f(v)\)}};
             \node at (0.2, 0.55) {\tiny{\(f(v)\)}};
         \end{tikzpicture}
     \end{center}
    
    Now, by Lemma \ref{analogousCone}, the analogous cones \(\sigma_1'\) and \(\sigma_2'\) are both cones of \(\tilde{\Sigma}_{\upalpha'}\). In each case, any map \(g\colon \Gamma \to \tilde{\Sigma}_{\upalpha'}\), mapping \(g(v)\) to the analogous cone will again yield no intersections of \(\ell\) with the rays of \(\tilde{\Sigma}_{\upalpha'}\) since in each case, \(m_{\upalpha'}(\ell) \in \sigma'\).   \end{proof}

We give the final proposition before the proof of Theorem \ref{thm: body}. 

\begin{proposition}
    Let \(\upalpha\), \(\upalpha' \in N_0^n\) with \(\upalpha \sim_\Sigma \upalpha'\). Then, there is a bijection \begin{align*}
        \mathcal{C}(\Gamma, \Sigma, \upalpha) &\longleftrightarrow  \mathcal{C}(\Gamma, \Sigma, \upalpha') \\ 
        c & \longleftrightarrow c'
    \end{align*} such that \([\mathcal{M}(c)] = [\mathcal{M}(c')] \in K_0(\mathrm{Var}/\mathbb{C})\).  \label{bigProp}
\end{proposition}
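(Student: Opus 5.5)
The bijection will be built realisable type by realisable type, using the slope-sensitive fans as scaffolding, and then checked against the strata formula \eqref{form}. Fix $\Gamma\in\Gamma_{0,n}$ and a realisable $c\in\mathcal{C}(\Gamma,\Sigma,\upalpha)$. By Proposition \ref{realLift}, choose a realisable lift $\tilde c\in\mathcal{C}(\Gamma,\tilde{\Sigma}_{\upalpha},\upalpha)$ with semi-stable graph $\Gamma^{\tilde c}$ and cone assignments $v\mapsto\tilde\sigma_v$, $e\mapsto\tilde\sigma_e$. Define $\tilde c'$ to have the same graph $\Gamma^{\tilde c}$, the slope assignment $m_{\upalpha'}$ from Lemma \ref{slopeLemma}, and cone assignments given by the analogous cones $\tilde\sigma_v'$, $\tilde\sigma_e'$. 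These are cones of $\tilde\Sigma_{\upalpha'}$ by Lemma \ref{analogousCone}. Then set $c'=f(\tilde c')\in\mathcal{C}(\Gamma,\Sigma,\upalpha')$.

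The central step is to show that $\tilde c'$ is realisable, by building a tropical map $g$ to $\tilde\Sigma_{\upalpha'}$ inductively along the tree. Root $\Gamma^{\tilde c}$ at some vertex $v_0$ and place $g(v_0)$ anywhere in $(\tilde\sigma_{v_0}')^\circ$. Each subsequent vertex $w$ is reached through an edge $\vv e$ from an already-placed vertex $v$. By Lemma \ref{coneContainmentLemma}, realisability of $\tilde c$ on this edge is equivalent to a conical containment among $m_{\upalpha}(\vv e)$ and the generators of $\tilde\sigma_v,\tilde\sigma_w$; all of these vectors lie in $m_{\upalpha}(\mathcal{I}^\Sigma)$. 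Lemma \ref{coneConditions} transfers the same containment to $\upalpha'$, so Claims 1--3 supply a length $\ell_e>0$ with $g(w)\in(\tilde\sigma_w')^\circ$ and $g(e)\subseteq\tilde\sigma_e'$.

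The main obstacle is that the claims fix the length $\ell_e$ (in Claims 1 and 3 it is uniquely determined by the position of $g(v)$), so freedom is used up edge by edge. Because $\Gamma^{\tilde c}$ is a tree, however, each vertex receives exactly one constraint from its parent, so no cycle conditions arise. In Claim 2 and in case (3), where $\ell_e$ ranges over an open interval, I would simply pick any admissible value. A subtler issue arises in Claims 1 and 3, where a vertex of $\tilde c$ lies at a ray. There the sign conditions on $p,q$ are exactly open-cone containments, and these are preserved because Remark \ref{nu} preserves coincidences of $\nu$. Legs are handled by Lemma \ref{crossLeg}: the edge from a vertex to its leg meets no extra rays, since $\tilde c$ has already subdivided at every ray crossing. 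It remains to check that Construction \ref{construction} applied to $g$ returns exactly $\tilde c'$. This requires that no edge of $g$ meets a ray other than at its subdivision vertices, which follows because each edge stays inside $\tilde\sigma_e'$.

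Next I would verify that $c'$ has the same combinatorial data as $c$. The map $\pi_*$ commutes with taking analogous cones, because rays of $\Sigma$ are fixed by the analogy and the cyclic order between rays of $\Sigma^\dagger$ and the $\rho_I$ is preserved. The bivalent vertices removed by $f$ are exactly those mapping to rays $\rho_I$ not in $\Sigma(1)$, and these correspond under the analogy. Hence $\Gamma^{c'}=\Gamma^c$ and $\dim\sigma'_v=\dim\sigma_v$, $\dim\sigma'_e=\dim\sigma_e$. Therefore $R_{c'}=R_c$, and \eqref{form} gives $[\mathcal{M}(c)]=[\mathcal{M}(c')]$.

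Well-definedness follows because the data of $c'$ depends only on the cones of $c$ and the analogy, so the choice of lift is irrelevant. For bijectivity, the relation $\sim_\Sigma$ is symmetric, so the same construction from $\upalpha'$ to $\upalpha$ gives an inverse; the analogous cone of the analogous cone is the original cone.
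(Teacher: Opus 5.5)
Your proposal is correct and follows essentially the same route as the paper. Both proofs choose a realisable lift $\tilde c$ and rebuild a tropical map to $\tilde{\Sigma}_{\upalpha'}$ edge by edge along the tree via Lemma \ref{coneContainmentLemma} and Lemma \ref{coneConditions}, using Lemma \ref{crossLeg} for the legs; they then push forward to $\Sigma$, compare strata via \eqref{form}, and reverse the roles of $\upalpha$ and $\upalpha'$ for bijectivity. Your extra checks are worth keeping: that $\pi_*$ commutes with taking analogous cones, and that $c'$ (with the same graph and cones as $c$) does not depend on the chosen lift. The paper leaves these implicit in its remark that ``the number and location of these intersections is the same''.
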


\begin{proof}
    Let \(c\) have semi-stable graph \(\Gamma\). By Lemma \ref{realLift}, we can choose a realisable lift \(\tilde{c} \in \mathcal{C}(\Gamma, \tilde{\Sigma}_{\upalpha}, \upalpha)\) with semi-stable graph \(\tilde{\Gamma}\). Consider now \(\upalpha'\), \(\tilde{\Sigma}_{\upalpha'}\) and a metric enhancement \(\vsqsubset_{\tilde{\Gamma}}\). We define a tropical map \[g\colon \sqC_{\tilde{\Gamma}} \to \tilde{\Sigma}_{\upalpha'}\] as follows. Choose a starting vertex \(v \in V(\tilde{\Gamma})\) with cone assignment \(\tilde{\sigma}_v\) in \(\tilde{c}\) and map this arbitrarily to the analogous cone \(\tilde{\sigma}_v' \in \tilde{\Sigma}_{\upalpha'}\) (which is a cone of \(\tilde{\Sigma}_{\upalpha'}\) by Lemma \ref{analogousCone}). For each neighbouring vertex \(w\) of \(v\), choose a length for the finite edge \(\vv{e}\) joining \(v\) to \(w\) such that \(g(w) \in (\tilde{\sigma}_w')^\circ\). By Lemma \ref{coneContainmentLemma}, the existence of such a length is equivalent to a conical containment involving \(m_{\upalpha}(\vv{e})\) and the slopes of the rays of \(\tilde{\sigma}_v'\) and \(\tilde{\sigma}_w'\). Since \(\tilde{c}\) is realisable, this conical containment is satisfied by \(\upalpha\), and hence by Lemma \ref{coneConditions}, it is also satisfied by \(\upalpha'\). 

    Continue this process for all neighbouring vertices of \(v\) and the rest of \(V(\tilde{\Gamma})\) to get a tropical map \(g\colon \vsqsubset_{\tilde{\Gamma}} \to \tilde{\Sigma}_{\upalpha'}\). Note that there is no dependence on previous lengths as Lemma \ref{coneContainmentLemma} holds for arbitrary starting positions. Take the combinatorial type of \(g\) as per Construction \ref{construction} to obtain \[\tilde{c}' \in \mathcal{C}(\Gamma, \tilde{\Sigma}_{\upalpha'}, \upalpha').\]

    The semi-stable graph associated to \(\tilde{c}'\) is the same as \(\tilde{\Gamma}\). By assuming each \(v\) and \(w\) are neighbours, since \(\upalpha \sim_\Sigma \upalpha'\), there will be no further subdivision of \(\vv{e}\) as in each case, either \(\tilde{\sigma}'_v = \tilde{\sigma}'_w\) or one is a face of the other. This holds for any finite edge of \(\tilde{\Gamma}\). There will also be no further subdivisions of any leg of \(\tilde{\Gamma}\), by Lemma \ref{crossLeg}.  
    
    Finally, apply the forgetful map \[f\colon \mathcal{C}(\Gamma, \tilde{\Sigma}_{\upalpha'}, \upalpha') \to \mathcal{C}(\Gamma, \Sigma, \upalpha')\] to obtain \(c' \in  \mathcal{C}(\Gamma, \Sigma, \upalpha')\) with semi-stable graph \(\Gamma'\). By the preceding construction, \(c'\) is realisable and \(\Gamma'\) is a subdivision of \(\Gamma\) at the legs or edges which intersect 0 or 1-dimensional cones of \(\Sigma\). The number and location of these intersections is the same for \(c\) and \(c'\), hence \(\Gamma' = \Gamma\) and for all \(v \in V(\Gamma)\), \(e \in E(\Gamma)\), the cone assignments match for \(c\) and \(c'\). Since Formula \eqref{form} depends only upon this data, we have \([\mathcal{M}(c)] = [\mathcal{M}(c')] \in K_0(\mathrm{Var}/\mathbb{C})\). 

    Reversing the roles of \(\upalpha\) and \(\upalpha'\) gives that this is a bijection.
\end{proof}

\begin{proof}[Proof of Theorem \ref{thm: body}]
    Let \(\overbar{\mathcal{M}}(X_\Sigma, \upalpha)\) and \(\overbar{\mathcal{M}}(X_\Sigma, \upalpha')\) have stratifications  \[\overbar{\mathcal{M}}(X_\Sigma, \upalpha) = \bigsqcup_{c}\mathcal{M}(c), \quad \quad \overbar{\mathcal{M}}(X_\Sigma, \upalpha') = \bigsqcup_{c'} \mathcal{M}(c')\] by realisable combinatorial types \(c\), \(c'\). By Proposition \ref{bigProp}, there exists a bijection \(c \leftrightarrow c'\) such that \([\mathcal{M}(c)] = [\mathcal{M}(c')] \in K_0(\mathrm{Var}/\C)\). Therefore, by Proposition \ref{splitter} \([\,\overbar{\mathcal{M}}(X_\Sigma, \upalpha)] = [\,\overbar{\mathcal{M}}(X_\Sigma, \upalpha')] \in K_0(\mathrm{Var}/\C)\).  \end{proof}

    \subsection{Elaborations and generalisations}\label{other}

\subsubsection{Wall and chamber decomposition}\label{walls} 

We now give hypersurfaces in \(N^n\) which form the walls of the \(\Sigma\)-slope decomposition. Each wall is given by either a linear or quadratic equation. 

As \(\upalpha\) moves within a chamber, the cyclic ordering of \(\mathcal{I}^\Sigma\), as defined in Definition \ref{cyclicOrderDef}, will remain constant. 
Crossing a wall will change this cyclic ordering. We allow \(\upalpha\) to sit within one (or more) of these walls: this is when \(\upalpha_I = k\upalpha_{J}\) for some subsets \(I\), \(J\) of \([n]\) and \(k\in \mathbb{Q}^*\), or when \(\upalpha_I\) becomes parallel to a ray of \(\Sigma\). 

We restrict \(\upalpha\) to be within a special subset of \(N^n_0\) from the onset, similar to the resonance chambers in \cite{hurw}. For \(I \subset[n]\), define \[W_I := \left\{\upalpha \in N^n : \upalpha_I = \colVec{0}{0}\right\}.\] Note that \(N_0^n = W_{[n]}\). For Theorem \ref{thm: body}, as described, we require \[\upalpha \in W_{[n]} \setminus \bigcup_{\substack{I \subset [n]\\I \neq \emptyset}} W_I.\] We will discuss how to adapt the definition of \(\Sigma\)-equivalence and adjust the proof of Theorem \ref{thm: body} when \(\upalpha\) lands in one (or more) \(W_I\) in Subsection \ref{zero}. 


\begin{definition}[Quadratic walls of the \(\Sigma\)-slope decomposition]
    Let \(I\), \(J \subset [n]\) be non-empty and distinct. Define quadratic hypersurfaces \[W_{IJ} = \left\{(\colVec{x_1}{y_1}, \ldots, \colVec{x_n}{y_n}) \in N^n : \sum_{i \in I} x_i \sum_{j \in J} y_j = \sum_{i \in I} y_i \sum_{j \in J} x_j\right\}.\] 
\end{definition}

\begin{definition}[Linear walls of the \(\Sigma\)-slope decomposition] Let \( \rho \in \Sigma(1)\) with primitive generator \(v_{\rho} = \colVec{{\rho}_x}{{\rho}_y}\). For \(I \subset [n]\) non-empty, define \[W_{\rho, I} = \left\{(\colVec{x_1}{y_1}, \ldots, \colVec{x_n}{y_n}) \in N^n :  \rho_x \sum_{i \in I} y_i = \rho_y \sum_{i \in I} x_i\right\}.\] For all \(I\), \(J \subset [n]\), \(\rho \in \Sigma(1)\), the \(W_{IJ}\) and \(W_{\rho, I}\) form the walls of the \(\Sigma\)-slope decomposition. 
\end{definition}

\begin{lemma}
    As \(\upalpha \in N^n_0\) varies in the chambers of the \(\Sigma\)-slope decomposition, the cyclic ordering of \(\nu_{\upalpha}(\mathcal{I}^\Sigma)\) is constant.
\end{lemma}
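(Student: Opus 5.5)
The plan is a continuity argument: within a chamber nothing can change the cyclic ordering. I will take two points \(\upalpha\), \(\upalpha'\) in the same chamber \(U\), i.e.\ the same connected component of the complement in \(N^n_0\) (or its real span \(N^n_{0,\R}\)) of all walls \(W_{IJ}\), \(W_{\rho,I}\) and all \(W_I\). I will then join them by a continuous path \(\gamma\colon[0,1]\to U\) and show that the cyclic ordering \(\mathscr{C}_{\gamma(t)}(\mathcal{I}^\Sigma)\) is locally constant in \(t\). Since \([0,1]\) is connected, it is then constant, which gives \(\upalpha\sim_\Sigma\upalpha'\).

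First I will record that each \(m_{\upalpha}(p)\), for \(p\in\mathcal{I}^\Sigma\), is a linear function of \(\upalpha\), and is constant when \(p\in\Sigma^\dagger(1)\). Off \(\bigcup W_I\) it is nonzero, so \(\nu_{\upalpha}(p)\) depends continuously on \(\upalpha\) along \(\gamma\).

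Next, for \(p,q\in\mathcal{I}^\Sigma\) I will consider the determinant \(D_{pq}(\upalpha)=\det(m_{\upalpha}(p),m_{\upalpha}(q))\). The key observation is that \(D_{pq}\) vanishes exactly on one of the walls. If \(p=I\) and \(q=J\) are subsets, it vanishes on \(W_{IJ}\). If one of them is a ray \(\pm\rho\), it vanishes on \(W_{\rho,I}\); for the negated rays \(-\rho\in\Sigma^\dagger(1)\) the same linear wall works, since parallelism is insensitive to sign. If both are rays, \(D_{pq}\) is constant. Two cases need a separate remark.

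\begin{itemize}
\item The degenerate case \(J=I^{\mathrm c}\): here \(D_{pq}\equiv 0\) by balancing. This parallelism is identical for all \(\upalpha\), with \(\nu_{\upalpha}(I^{\mathrm c})=-\nu_{\upalpha}(I)\) always, so it causes no change.
\item The case \(J=I\): this is trivial.
\end{itemize}

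Hence on \(U\), each \(D_{pq}\) is either identically zero or of constant sign, by connectedness and the intermediate value theorem.

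Finally, I will argue that the cyclic ordering on finitely many points of \(S^1\) is determined by the signs of these pairwise determinants together with the pairwise inner-product signs. For \(\nu(p)=\nu(q)\) versus \(\nu(p)=-\nu(q)\), I note that the sign of \(\langle m(p),m(q)\rangle\) cannot flip while \(D_{pq}=0\) is maintained, because the vectors are nonzero and vary continuously. More concretely, \([p,q,r]\) holds for distinct directions iff the triple of orientation signs \((\operatorname{sgn}D_{pq},\operatorname{sgn}D_{qr},\operatorname{sgn}D_{rp})\) lies in a prescribed set. The one ambiguous situation is a pair of antipodal points, where \(D=0\). There I would resolve the orientation by replacing one point with its antipode, which again lies in \(\mathcal{I}^\Sigma\): it is \(-\rho\) or \(I^{\mathrm c}\). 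I expect this bookkeeping to be the main obstacle, namely making rigorous that the cyclic order is a function of the sign data, including coincident and antipodal points. My first attempt would be to work with the argument angles \(\theta_p(t)\), which are continuous lifts along \(\gamma\). An order change forces two angles \(\theta_p,\theta_q\) to become congruent mod \(\pi\) at some time \(t\) when they were not before, so \(D_{pq}(\gamma(t))=0\). That means \(\gamma\) meets a wall, contradicting \(\gamma\subset U\). Constancy then follows.
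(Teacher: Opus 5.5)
Your proposal is correct and follows the same idea as the paper's proof: the cyclic ordering of \(\nu_{\upalpha}(\mathcal{I}^\Sigma)\) can only change when two slopes \(m_{\upalpha}(p)\), \(m_{\upalpha}(q)\) become parallel, which happens exactly on a wall \(W_{IJ}\) or \(W_{\rho,I}\), and pairs of rays of \(\Sigma^\dagger\) are fixed. Your version is more careful than the paper's: you work in the real span, use a path and continuity away from the \(W_I\), and note that \(I\) and \(I^{\mathrm{c}}\) are always antipodal, so \(W_{I I^{\mathrm{c}}}\) must be excluded. The collision argument in your last paragraph already finishes the proof, so the sign-data bookkeeping can be dropped; its pairwise inner-product signs are not constant on a chamber anyway.
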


\begin{proof}
    For \(p\), \(q\), \(r \in \mathcal{I}^\Sigma\), let \([p, q, r]\) hold. Without loss of generality, let \(m_{\upalpha}(p)\), \(m_{\upalpha}(q)\), \(m_{\upalpha}(r)\) be contained within a half space. Then, a change of cyclic ordering is, without loss of generality, \([q, p, r]\). If \(q = I\), \(p = J \subset [n]\), the cyclic ordering will change at the point when \(m_{\upalpha}(I) = k m_{\upalpha}(J)\) for \(k \in \mathbb{Q}^*\), i.e. when \[\sum_{i \in I}x_i \sum_{j \in J} y_j = \sum_{i \in I} y_i \sum_{j \in J} x_j.\] This is when \(\upalpha\) crosses \(W_{IJ}\). 

    If instead \(p = \rho \in \Sigma(1)\) and \(q = I \subset [n]\), the cyclic ordering changes when \(m_{\upalpha}(\rho) = k v_\rho\), i.e. when \[\rho_y\sum_{i \in I} x_i= \rho_x \sum_{i \in I} y_i.\] It cannot be the case that both \(q\) and \(p\) correspond to rays of \(\Sigma\) as these are fixed and so cannot change orders. 
\end{proof}

\subsubsection{Removing the non-zero slope assumption} \label{zero}

If there exists \(I \subset [n]\) for which \(\upalpha_I = \colVec{0}{0}\), we can still make sense of the cyclic ordering by using Definition \ref{cyclicOrderDef} with the set \(\mathcal{I}^\Sigma \setminus \{I, I^{\mathrm{c}}\}\). If \(\upalpha_I\) vanishes for multiple subsets, we simply remove all of these (and their complements) from \(\mathcal{I}^\Sigma\). In these instances, for \(\upalpha\) and \(\upalpha'\) to be considered to be in the ``same chamber'', they must both fall into the same walls, say \(\cap_{i = 1}^k W_{I_i}\) and induce the same cyclic ordering on \(\mathcal{I}^\Sigma \setminus \{I_1, I_1^{\mathrm{c}}, \ldots, I_k, I_k^{\mathrm{c}}\}\). 

A similar procedure can be done to generalise \cite[Theorem B]{kannan}. Note that we will now use notation and terminology from this paper without explanation. This theorem holds for \(\upalpha \in \Z^n\) away from the walls of the resonance chamber.  However, one can consider \(\upalpha \in W_I\) for some \(I \subset [n]\) to still move within a chamber of the resonance decomposition. 

To find combinatorial descriptions for \([\mathcal{M}_{\upalpha}(\mathbf{T})]\) when \(\upalpha \in W_I\), we make some adjustments to the set-up. We first modify the definition of an \(\upalpha\)-directing of a tree. The previous definition does not hold when \(\upalpha \in W_I\) since finding an \(\upalpha\)-directing relies on there being no proper subset \(I\) such that \(\upalpha_I = 0\). To handle this, if there now exists an internal edge \(e\) such that \(w(T_1) = w(T_2) = 0\), we contract \(e\). We say that this edge has \emph{weight 0} in the dual graph of \(\mathbf{T}\). We define an \(\upalpha\)-directing of the contracted graph, and associate this with our original source curve \(\mathbf{T}\) and proceed as before. Thus, the \(\upalpha\)-directing is still constant on chambers since to pass from one directing to another involves crossing a wall of the form \(\upalpha_J = 0\). 

\begin{example}
    Let \(\upalpha = (3, -2, 2, 4, -7)\). Then, \(\upalpha \in W_{\{23\}}\). We contract the edge in red to arrive at a curve which we can give a \(\upalpha\)-directing to. 
\begin{center}
    \begin{tikzpicture}[scale = 2]
        \draw[thick] (0,0) -- (0.5,0.25);
        \draw[red, thick] (0,0) -- (0.5, -0.25);
        \draw[thick] (0,0) -- (-0.5, 0);
        \node at (-0.65, 0) {3};
        \node at (1.15, 0.1) {4};
        \node at (1.15, 0.5) {-7};
        \draw[thick] (0.5, 0.25) -- (1, 0.5);
        \draw[thick] (0.5, 0.25) -- (1, 0.1);
        \draw[blue, thick] (0.5, -0.25) -- (1, -0.5);
        \draw[blue, thick] (0.5, -0.25) -- (1, -0.1);
        \node at (1.15, -0.1) {2};
        \node at (1.15, -0.5) {-2};
        \node at (0.2, -0.25) {0};
        \draw[thick, ->] (1.5, 0) -- (2,0);
        \filldraw (0,0) circle (1pt);
        \filldraw (0.5,.25) circle (1pt);
        \filldraw  circle (1pt);
        \filldraw (0.5,-0.25) circle (1pt);
        \filldraw (-0.5,0) circle (1pt);
        \filldraw (1, 0.5) circle (1pt);
        \filldraw (1, -0.5) circle (1pt);
        \filldraw (1,0.1) circle (1pt);
        \filldraw (1,-0.1) circle (1pt);

        \draw[thick] (3,0) -- (3.5,0.25);
        \draw[blue, thick] (3,0) -- (3.5, -0.25);
        \draw[thick] (3,0) -- (2.5, 0);
        \filldraw (3.5,0.25) circle (1pt);
        \filldraw (3.5,-0.25) circle (1pt);
        \filldraw (2.5,0) circle (1pt);
        \node at (2.35, 0) {3};
        \draw[thick] (3.5, 0.25) -- (4, 0.5);
        \draw[thick] (3.5, 0.25) -- (4, 0.1);
        \filldraw (4, 0.5) circle (1pt);
        \filldraw (4,0.1) circle (1pt);
        \draw[blue, thick] (3, 0) -- (3, -0.4);
        \filldraw (3, -0.4) circle (1pt);
        \filldraw (3,0) circle (1pt);
        \node at (4.15, 0.5) {-7};
        \node at (4.15, 0.1) {4};
        \node at (3.65,-0.25) {2}; 
        \node at (3.15, -0.4) {-2};

    \end{tikzpicture}
    \begin{flushright}\emph{\textbf{End of example.}}\end{flushright}
\end{center}
\end{example}

One must also make an adjustment to the strata formula given in \cite[Equation 2.14]{kannan}, reducing the power of \(\C^*\) appearing. Given a vertex \(v \in V(\mathbf{P})\), define the following number: \[\lambda_v = |\{E(\mathbf{T}) \ni e \in \pi^{-1}(v) : w(e) = 0\}|.\] An edge of weight zero in the dual graph indicates, in the algebro-geometric picture, two curve components that are stacked above each other and meet at a node with slope 0. Thus, the factor of \(\C^*\) coming from each curve component is reduced due to matching at the node. Adjusting for this gives the following isomorphism of stacks:
\begin{equation*}
    \mathcal{M}_{\upalpha}(\pi: \mathbf{T} \to \mathbf{P}) \cong \prod_{\substack{v \in V(\mathbf{T}) \\ \text{val}(v) \geq 3}} \mathcal{M}_{0, \text{val}(v)} \times \prod_{\substack{v \in V(\textbf{T}) \\ \text{val}(v) = 2}} B \mu_{w(v)} \times \prod_{v \in V(\textbf{P})} (\C^*)^{|\{u \in \pi^{-1}(v) | \text{val}(u) \geq 3\}| -1 - \lambda_v}.
\end{equation*}

   \subsubsection{Other topological invariants} \label{satake}

Given a chamber structure where the class of our space in the Grothendieck ring is constant, we also get that the topological Euler characteristic is constant, and other invariants which satisfy the cut and paste relation. However, one can also consider invariants such as the orbifold Euler characteristic (also known as the Euler--Satake characteristic). This was first introduced by Satake \cite[Section 3]{satake}. 

In our case, the orbifold Euler characteristic is not constant on the chamber decomposition we have defined: consider \(\upalpha = \left(\colVec{r}{0}, \colVec{0}{r}, \colVec{-r}{-r}\right)\) and the following combinatorial type: 

\begin{center}
    \begin{tikzpicture}[scale = 1.75]
        \draw[thick] (0,0) -- (1,0) -- (0,0) -- (0,1) -- (0,0) -- (-0.7, -0.7);
        \filldraw[red] (0.2, 0.5) circle (0.75pt);
        \draw[red, ->, thick] (0.2, 0.5) -- (1, 0.5); 
        \draw[red, ->,thick] (0.2, 0.5) -- (0.2, 1);
        \draw[red, ->,thick] (0.2, 0.5) -- (-0.7, -0.4);
        \filldraw[white] (0, 0.3) circle (0.75pt);
        \draw[thick, red] (0, 0.3) circle (0.75pt);
    \end{tikzpicture}
\end{center}

Our map on the stable component is constant: \[z \mapsto [0:0:1].\] However, the map from the unstable component has the following form: \[[S:T] \xmapsto{f} [S^r: 0: T^r].\]

Now, we notice that our source curve has an action by \(\mu_r\), the group of \(r^{\mathrm{th}}\) roots of unity, which commutes with \(f\). Thus, \(\mathrm{Aut}(f) = \mu_r\). When we take the orbifold Euler characteristic, this automorphism group will contribute \(\frac{1}{r}\). We can increase \(r\) without leaving the chamber of the \(\Sigma\)-slope decomposition, and so, the orbifold Euler characteristic is not constant on these chambers. This issue arises already in dimension 1 for the space of maps considered in \cite{kannan}. 

\printbibliography
\end{document}